\newtheorem{theorem}{Theorem}
\newtheorem{lemma}[theorem]{Lemma}
\newtheorem{corollary}[theorem]{Corollary}
\newtheorem{proposition}[theorem]{Proposition}
\newtheorem{lettertheorem}{Theorem}
\newtheorem{letterlemma}[lettertheorem]{Lemma}
\theoremstyle{definition}
\theoremstyle{remark}
\numberwithin{equation}{section}
\newcommand{\set}[1]{\left\{#1\right\}}
\newcommand{\abs}[1]{\lvert#1\rvert}
\newcommand{\nm}[1]{\lVert#1\rVert}
\newcommand{\B}{\mathcal{B}}
\newcommand{\D}{\mathbb{D}}
\newcommand{\DD}{\widehat{\mathcal{D}}}
\newcommand{\Dd}{\widecheck{\mathcal{D}}}
\newcommand{\M}{\mathcal{M}}
\newcommand{\DDD}{\mathcal{D}}
\newcommand{\De}{\mathcal{D}}
\newcommand{\N}{\mathbb{N}}
\newcommand{\C}{\mathbb{C}}
\newcommand{\e}{\varepsilon}
\renewcommand{\phi}{\varphi}
\def\BMOA{\mathord{\rm BMOA}}
\DeclareMathOperator*{\esssup}{ess\,sup}
\newcommand{\hlp}{HL(p)}
\def\a{\alpha}       \def\b{\beta}        
       \def\De{{\Delta}}    \def\e{\varepsilon}
\def\la{\lambda}     \def\om{\omega}      
                  \def\z{\zeta}
\def\omg{\widehat{\omega}}
\def\fg{\widehat{f}}
\renewcommand{\H}{\mathcal{H}}
\newenvironment{Prf}{\noindent{\emph{Proof of}}}
{\hfill$\Box$ }
\newcommand{\Ho}{H_{\om}}
\newcommand{\opnorm}{\@ifstar\@opnorms\@opnorm}
\newcommand{\@opnorms}[1]{%
  \left|\mkern-1.5mu\left|\mkern-1.5mu\left|
   #1
  \right|\mkern-1.5mu\right|\mkern-1.5mu\right|
}
\newcommand{\@opnorm}[2][]{%
  \mathopen{#1|\mkern-1.5mu#1|\mkern-1.5mu#1|}
  #2
  \mathclose{#1|\mkern-1.5mu#1|\mkern-1.5mu#1|}
}
\begin{document}

\title[Hilbert-type operator on Hardy spaces]{Hilbert-type operator induced by radial weight  on Hardy spaces}

\keywords{Hilbert operator, Hardy space, Bergman  reproducing kernel, radial weight.}

\author{Noel Merchán}
\address{Departamento de Matemática Aplicada, Universidad de M\'alaga, Campus de
Teatinos, 29071 M\'alaga, Spain} \email{noel@uma.es}

\author{Jos\'e \'Angel Pel\'aez}
\address{Departamento de An\'alisis Matem\'atico, Universidad de M\'alaga, Campus de
Teatinos, 29071 M\'alaga, Spain} \email{japelaez@uma.es}

\author{Elena de la Rosa}
\address{Departamento de An\'alisis Matem\'atico, Universidad de M\'alaga, Campus de
Teatinos, 29071 M\'alaga, Spain} 
\email{elena.rosa@uma.es}

\thanks{This research was supported in part by Ministerio de Econom\'{\i}a y Competitividad, Spain, projects
PGC2018-096166-B-100; La Junta de Andaluc{\'i}a,
projects FQM210  and UMA18-FEDERJA-002.}

\subjclass[47G10, 30H10]{47G10, 30H10}

\maketitle

\begin{abstract}
 We consider 
 the Hilbert-type operator defined by
 $$
  H_{\omega}(f)(z)=\int_0^1 f(t)\left(\frac{1}{z}\int_0^z B^{\omega}_t(u)\,du\right)\,\omega(t)dt,$$
  where  $\{B^{\omega}_\zeta\}_{\zeta\in\mathbb{D}}$ are the  reproducing kernels of the Bergman space $A^2_\omega$   induced by a radial weight $\omega$
in the unit disc $\mathbb{D}$.
We prove that $H_{\omega}$ is bounded on the Hardy space $H^p$, $1<p<\infty$, if and only if
\begin{equation}
\label{abs1}
\sup_{0\le r<1} \frac{\widehat{\omega}(r)}{\widehat{\omega}\left( \frac{1+r}{2}\right)}<\infty, \tag{\dag}
\end{equation}
 and 
 \begin{equation*}
   \sup\limits_{0<r<1}\left(\int_0^r \frac{1}{\widehat{\omega}(t)^p} dt\right)^{\frac{1}{p}}
     \left(\int_r^1 \left(\frac{\widehat{\omega}(t)}{1-t}\right)^{p'}\,dt\right)^{\frac{1}{p'}} <\infty,
    \end{equation*}
    where $\widehat{\omega}(r)=\int_r^1 \omega(s)\,ds$. 

We also prove that $H_\omega: H^1\to H^1$ is bounded if and only if  \eqref{abs1} holds and   
 $$     \sup\limits_{r \in [0,1)} \frac{\widehat{\omega}(r)}{1-r} \left(\int_0^r \frac{ds}{\widehat{\omega}(s)}\right)<\infty.$$ 
        
As for the case $p=\infty$, $H_\omega$ is bounded from $H^\infty$  to $\BMOA$, or to the Bloch space, if and only if  \eqref{abs1} holds.

In addition, we prove that there does not exist radial weights $\omega$ such that $H_{\omega}: H^p \to H^p $, $1\le p<\infty$, is compact and we consider  the action of $H_{\omega}$ on some spaces of analytic functions closely related to Hardy spaces.  
  \end{abstract}
\section{Introduction}

For $0<p<\infty$, let $L^p_{ [0,1)}$ be the Lebesgue space of measurable functions such that 
$$\|f\|^p_{L^p_{[0,1)}}=\int_0^1 |f(t)|^p\,dt<\infty,$$ and
let $\H(\D)$  denote the space of analytic functions in the unit disc $\D=\{z\in\C:|z|<1\}$.
 The Hardy space $H^p$ consists of $f\in\H(\D)$ for which
 \begin{equation*}\label{normi}
    \|f\|_{H^p}=\sup_{0<r<1}M_p(r,f)<\infty,
   \end{equation*}
where
    $$
    M_p(r,f)=\left (\frac{1}{2\pi}\int_0^{2\pi}
    |f(re^{i\theta})|^p\,d\theta\right )^{\frac{1}{p}},\quad 0<p<\infty,
    $$
and 
    $$
    M_\infty(r,f)=\max_{0\le\theta\le2\pi}|f(re^{i\theta})|.
    $$
    For a  nonnegative function  $\om  \in L^1_{[0,1)}$, the extension to $\D$, defined by $\om(z)=\om(|z|)$ for all $z\in\D$, is called a radial weight.
 Let $A^2_{\om}$ denote the weighted Bergman space   of $f\in\H(\D)$ such that $\|f\|_{A^2_\omega}^2=\int_\D|f(z)|^2\omega(z)\,dA(z)<\infty$,
where $dA(z)=\frac{dx\,dy}{\pi}$ is the normalized area measure on $\D$.
 Throughout this paper we assume $\widehat{\om}(z)=\int_{|z|}^1\om(s)\,ds>0$ for all $z\in\D$, for otherwise $A^2_\om=\H(\D)$.

\vspace{1em}
\par
The Hilbert matrix is the infinite matrix whose entries are $h_{n,k}=(n+k+1)^{-1},$\, $k,n\in\N\cup\{0\}$. It
can be viewed as an operator on spaces of analytic functions, by its action on the Taylor coefficients
$$
\fg(k)\mapsto \sum_{k=0}^{\infty}
\frac{\fg(k)}{n+k+1}, \quad n\in\N\cup\{0\},
$$
called
the Hilbert operator. 
That is, if
 $f(z)=\sum_{k=0}^\infty \fg(k)z^k\in \H(\D) $
\begin{equation}\label{H}
H(f)(z)=
\sum_{n=0}^{\infty}\left(\sum_{k=0}^{\infty}
\frac{\fg(k)}{n+k+1}\right)z^n,
\end{equation}
whenever the  right hand side makes sense and defines an
analytic function in $\D $.

The Hilbert operator $H$ is bounded on  Hardy spaces 
$H^p$ if and only if $1<p<\infty$ \cite{DiS}. A proof of this result can be obtained 
using the following integral representation, valid for any $f\in H^1$, 
\begin{equation}\label{H-int}
H(f)(z)=\int_0^1f(t)\frac{1}{1-tz}\,dt.
\end{equation}
Going further, the formula \eqref{H-int}
has been  employed to solve  a good number of questions in operator theory related to  the boundedness,  the operator norm  and  the spectrum of the Hilbert operator on classical  spaces of analytic functions \cite{AlMonSa,Di,DJV,PelRathg}.
During the last decades several generalizations of the Hilbert operator have attracted a considerable amount
of attention  \cite{GaPe2010,GirMerIE2017,PelRathg,PelRosa21}. We will focus on the following, introduced in \cite{PelRosa21}.
For a radial weight $\omega$, we consider the Hilbert-type operator
\begin{equation*}\label{eq:i1}
    H_{\omega}(f)(z)=\int_0^1 f(t)\left(\frac{1}{z}\int_0^z B^{\om}_t(\z)d\z\right)\,\om(t)dt,
\end{equation*}
where $\{B^\om_z\}_{z\in\D}\subset A^2_\om$ are the Bergman reproducing kernels of $A^2_\omega$.
The choice $\om=1$ gives the integral representation \eqref{H-int}
 of the classical Hilbert operator, therefore it is natural to think of the features of a radial weight $\omega$
  so that $H_\omega$ has some of the nice properties of the (classical) Hilbert operator. 
  In this paper, among other results,
   we describe the radial weights $\omega$ such that the Hilbert-type operator $H_\omega$ is bounded on $H^p$, $1\le p<\infty$.
\vspace{1em}

In order to state our results some more notation is needed.
 For $0<p<\infty$, the Dirichlet-type space $D^p_{p-1}$ is the space
of $f\in\H(\D)$ such that
$$\| f\|^p_{D^p_{p-1}}=|f(0)|^p+\int_\D |f'(z)|^p(1-|z|)^{p-1}\,dA(z)<\infty,$$
 and the Hardy-Littlewood space $HL(p)$ consists of the $f(z)=\sum\limits_{n=0}^{\infty} \widehat{f}(n) z^n\in \H(\D)$ such that
$$ \nm{f}^p_{HL(p)}=\sum\limits_{n=0}^{\infty} \abs{\widehat{f}(n)}^p (n+1)^{p-2}<\infty.$$
We will also consider  the space $H(\infty,p)=\{f\in\H(\D): \|f\|^p_{H(\infty,p)}=\int_0^1 M^p_\infty(r,f)\,dr<\infty\}.$
These spaces satisfy the well-known inclusions          
\begin{equation}\label{eq:HpDpp<2}
D^p_{p-1}\subset H^p\subset HL(p),\quad 0<p\le 2,
\end{equation}
\begin{equation}\label{eq:HpDpp>2}
HL(p)\subset H^p\subset D^p_{p-1} ,\quad 2\le p<\infty,
\end{equation}
and
\begin{equation}\label{Xp:Hinftyp}
H^p\subset H(\infty,p), \quad D^p_{p-1}\subset H(\infty,p),\quad 0<p<\infty.
\end{equation}
See \cite{Duren,Flett,LP} for  proofs of \eqref{eq:HpDpp<2} and \eqref{eq:HpDpp>2}, and
\cite[p. 127]{Pomm} and \cite[Lemma~4]{GaGiPeSis} for a  proof of \eqref{Xp:Hinftyp}.

The   Bergman reproducing kernels,  induced by a radial weight $\om$, can be written as 
 $B^\om_z(\z)=\sum \overline{e_n(z)}e_n(\z)$  for each orthonormal basis $\{e_n\}$ of $A^2_\om$, and therefore 
 using the basis
 induced by the normalized monomials,
\begin{equation}\label{eq:B}
B^\om_z(\z)=\sum_{n=0}^\infty\frac{\left(\overline{z}\z\right)^n}{2\om_{2n+1}}, \quad z,\z\in \D.
\end{equation}
 Here $\om_{2n+1}$ are the odd moments of $\om$, and in general from now on we write $\om_x=\int_0^1r^x\om(r)\,dr$ for all $x\ge0$. A radial weight $\omega$
 belongs to the class~$\DD$ if
 $\widehat{\om}(r)\le
C\widehat{\om}(\frac{1+r}{2})$ for some constant $C=C(\omega)>1$ and all $0\le r <1$. If there exist $K=K(\om)>1$ and $C=C(\om)>1$ such that $\widehat{\om}(r)\ge C\widehat{\om}\left(1-\frac{1-r}{K}\right)$ for all $0\le r<1$, then $\om\in\Dd$. Further, we write $\DDD=\DD\cap\Dd$ for short. Recall that $\om\in\M$ if there exist constants $C=C(\om)>1$ and $K=K(\om)>1$ such that $\om_{x}\ge C\om_{Kx}$ for all $x\ge1$. It is known that $\Dd\subset \M$ \cite[Proof of Theorem~3]{PR19} but $\Dd \subsetneq \M$
 \cite[Proposition~14]{PR19}. However, \cite[Theorem~3]{PR19} ensures that $\DDD=\DD\cap\Dd=\DD\cap\M$.
 These classes of weights arise in meaningful questions concerning radial weights and classical operators, such as the differentiation operator $f^{(n)}$ or 
 the Bergman projection $ P_\om(f)(z)=\int_{\D}f(\z) \overline{B^\om_{z}(\z)}\,\om(\z)dA(\z)$ \cite{PR19}.
We will also deal with the sublinear   Hilbert-type operator 
\begin{equation*}\label{eq:sub}
    \widetilde{H_{\omega}}(f)(z)=\int_0^1 |f(t)|\left(\frac{1}{z}\int_0^z B^{\om}_t(\z)d\z\right)\,\om(t)\,dt.
\end{equation*}

If $X,Y \subset \H(\D)$ are normed vector spaces, and $T$ is a sublinear operator,
 we denote
$\|T\|_{X\to Y}=\sup_{\|f\|_X\le 1}\| T(f)\|_{Y}$.

\begin{theorem}
\label{bounded Xp}
Let  $\om$ be a radial weight and $1<p<\infty$. Let $X_p,Y_p\in \{H(\infty,p),H^p, D^p_{p-1}, HL(p)\}$ and 
 $T\in\{ \Ho, \widetilde{\Ho}\}$. Then the following statements are equivalent:
\begin{itemize}
\item[(i)] $T\,:\,X_p \to Y_p$ is bounded; 
\item[(ii)] $\om \in \DDD$ and 
$
M_p(\om)=\sup\limits_{N\in \N} \left(\sum\limits_{n=0}^N \frac{1}{(n+1)^2 \om_{2n+1}^p}\right)^{\frac{1}{p}}\left(\sum\limits_{n=N}^{\infty} \om_{2n+1}^{p'}(n+1)^{p'-2}\right)^{\frac{1}{p'}}<\infty;$
\item[(iii)] $\om \in \DD$ and 
$
M_p(\om)=\sup\limits_{N\in \N} \left(\sum\limits_{n=0}^N \frac{1}{(n+1)^2 \om_{2n+1}^p}\right)^{\frac{1}{p}}\left(\sum\limits_{n=N}^{\infty} \om_{2n+1}^{p'}(n+1)^{p'-2}\right)^{\frac{1}{p'}}<\infty;
$
\item[(iv)] $\om \in \DD$ and 
$
M_{p,c}(\om)=
\sup\limits_{0<r<1} \left(\int_0^r \frac{1}{\omg(t)^p} dt\right)^{\frac{1}{p}}
\left(\int_r^1 \left(\frac{\omg(t)}{1-t}\right)^{p'}\,dt\right)^{\frac{1}{p'}}<\infty$.
\end{itemize}

\end{theorem}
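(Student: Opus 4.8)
The plan is to pass to Taylor coefficients and reduce the whole statement to a weighted discrete Hardy (Muckenhoupt) inequality, then recover the four function-space versions from the inclusions \eqref{eq:HpDpp<2}--\eqref{Xp:Hinftyp}. First I would record the coefficient action. Substituting \eqref{eq:B} and integrating termwise gives $\frac1z\int_0^z B^\om_t(\z)\,d\z=\sum_{n\ge0}\frac{(tz)^n}{2(n+1)\om_{2n+1}}$, a series with nonnegative coefficients, so that for $f(z)=\sum_k\widehat f(k)z^k$
\begin{equation*}
\widehat{\Ho(f)}(n)=\frac{1}{2(n+1)\om_{2n+1}}\sum_{k=0}^{\infty}\om_{n+k}\,\widehat f(k),\qquad n\ge0.
\end{equation*}
Since the kernel coefficients are nonnegative, $|\widehat{\Ho(f)}(n)|$ is dominated by the corresponding (nonnegative) coefficient of $\widetilde{\Ho}(f)$; hence $\widetilde{\Ho}$ governs every upper bound, while for lower bounds one tests with functions that are nonnegative on $[0,1)$, for which $\Ho$ and $\widetilde{\Ho}$ coincide. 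This is why both operators obey the same characterization.

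The analytic core is the equivalence between the weight conditions and boundedness. The key reduction uses that, for $\om\in\DD$, the moments satisfy $\om_{2n+1}\asymp\omg\!\left(1-\tfrac1n\right)$, with $\om_{n+k}\asymp\om_{2n+1}$ for $0\le k\le n$ and $\om_{n+k}$ decaying for $k>n$; thus the inner sum is comparable to a two-sided Hardy average of $\{\widehat f(k)\}$ weighted by $\om_{2n+1}$. The boundedness $\Ho\colon\hlp\to\hlp$ --- whose norm is the weighted $\ell^p$ norm of the coefficients with weight $(n+1)^{p-2}$ --- then turns into a weighted discrete Hardy inequality whose boundedness is characterized exactly by the discrete Muckenhoupt quantity $M_p(\om)<\infty$. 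I would carry this out by splitting the double sum at $k=n$, applying the classical two-weight Hardy/Muckenhoupt criterion to each piece, and checking that each resulting supremum is comparable to $M_p(\om)$.

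Next I would settle the purely weight-theoretic equivalences. The implication $(ii)\Rightarrow(iii)$ is immediate from $\DDD\subset\DD$. For $(iii)\Leftrightarrow(iv)$ I would convert sums into integrals via $\om_{2n+1}\asymp\omg\!\left(1-\tfrac1n\right)$ and a dyadic comparison of $\sum_n$ with $\int_0^1$, so that $M_p(\om)$ and $M_{p,c}(\om)$ are comparable. For $(iii)\Rightarrow(ii)$ I would show that $\om\in\DD$ together with $M_p(\om)<\infty$ forces $\om\in\M$, whence $\om\in\DD\cap\M=\DDD$ by \cite[Theorem~3]{PR19}: finiteness of $M_p(\om)$ bounds the tail factor $\bigl(\sum_{n\ge N}\om_{2n+1}^{p'}(n+1)^{p'-2}\bigr)^{1/p'}$ against the reciprocal head factor, and inserting this into the $\DD$-regularity of $\{\om_{2n+1}\}$ yields the geometric decay of the moments that defines $\M$.

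Finally I would transfer the $\hlp\to\hlp$ characterization to every admissible pair. For sufficiency, assuming the weight conditions I would prove that $\widetilde{\Ho}$ maps the largest admissible domain into the smallest admissible range in each regime $p\lessgtr2$; all remaining pairs then follow from \eqref{eq:HpDpp<2}, \eqref{eq:HpDpp>2} and \eqref{Xp:Hinftyp}, and since the image is a single analytic function the $H(\infty,p)$, $H^p$ and $\dpp$ bounds reduce to the coefficient estimate already obtained. For necessity it suffices to assume boundedness on the weakest pair (smallest domain, largest range), since any boundedness implies it; I would then feed nonnegative test functions (normalized monomials and dyadic coefficient blocks) lying in the smallest domain into $\Ho$ and bound the output from below using that $\Ho(f)$ has nonnegative coefficients, reading off \eqref{abs1} (that is, $\om\in\DD$) from the degeneration of the kernel and $M_p(\om)<\infty$ from the extremality of the blocks. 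The step I expect to be the main obstacle is precisely this reduction to a genuine Hardy operator, uniformly in the four spaces: controlling the off-diagonal tail $\sum_{k>n}\om_{n+k}\widehat f(k)$ and verifying that none of the four spaces alters the characterizing quantity is where the full strength of $\om\in\DD$, through the sharp moment asymptotics, is needed.
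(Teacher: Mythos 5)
Your coefficient formula for $\Ho$ is correct, and the reduction of the case $X_p=Y_p=\hlp$ to two discrete two\-/weight Hardy inequalities is sound: for $\om\in\DD$ one has $\om_{n+k}\asymp\om_{2n+1}$ for $k\le n$ and $\om_{n+k}\asymp\om_{2k+1}$ for $k>n$, the diagonal piece is the unconditional discrete Hardy inequality, and the tail piece is characterized exactly by $M_p(\om)<\infty$; this is in fact a cleaner route to that particular pair than the paper takes. The weight\-/theoretic equivalences (your treatment of (ii)$\Leftrightarrow$(iii) via $\M$ and (iii)$\Leftrightarrow$(iv) via $\om_{2n+1}\asymp\omg\left(1-\frac{1}{n+1}\right)$) and your necessity sketch follow the paper's lines. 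The genuine gap is in the sufficiency direction, at exactly the point you defer as ``the main obstacle'': your mechanism can only produce boundedness when the domain controls Taylor coefficients, i.e. when $\|f\|_{\hlp}\lesssim\|f\|_{X_p}$. This holds for $X_p=\hlp$, and via \eqref{eq:HpDpp<2} also for $X_p\in\{H^p,D^p_{p-1}\}$ when $p\le2$, but it fails for $X_p=H(\infty,p)$ for every $p\in(1,\infty)$, and for $X_p\in\{H^p,D^p_{p-1}\}$ when $p>2$. Indeed $f_N(z)=Nz^N$ gives $\|f_N\|^p_{H(\infty,p)}\asymp N^{p-1}$ while $\|f_N\|^p_{\hlp}\asymp N^{2p-2}$, and $\|z^N\|_{H^p}=1$ while $\|z^N\|^p_{\hlp}=(N+1)^{p-2}\to\infty$ for $p>2$; so no coefficient estimate is available from membership in these domains, and splitting the double sum at $k=n$ cannot even be started.

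Since the theorem asserts boundedness for \emph{all} sixteen pairs, in particular with domain $H(\infty,p)$, this is not a technicality but a structural failure of the purely discrete approach: a function in $H(\infty,p)$ is controlled only through $M_\infty(t,f)$, not through $\{\widehat f(k)\}$. The paper's proof of (ii)$\Rightarrow$(i) is for this reason necessarily ``continuous'': it proves $\|\widetilde{\Ho}(f)\|_{D^p_{p-1}}\lesssim\|f\|_{H(\infty,p)}$ by combining Minkowski's inequality with the estimate of $M_p(r,G^\om_t)$ for the derivative of the kernel, the replacement of the measure $\om(t)\,dt$ by $\frac{\omg(t)}{1-t}\,dt$ --- legitimate precisely because $\om\in\DDD$, through Lemma~\ref{le:descriptionDDD} and \eqref{eq:intparts}; note that your sufficiency argument never invokes the $\Dd$ half of the hypothesis --- and Muckenhoupt's continuous two\-/weight Hardy inequalities, and only then transfers the bound to all pairs. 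A secondary unjustified step in your plan is the claim that the $H(\infty,p)$, $H^p$, $D^p_{p-1}$ and $\hlp$ norms of the image coincide ``since the image is a single analytic function'': these norms are \emph{not} equivalent on general power series with nonnegative coefficients; the equivalence holds for functions of the form $\widetilde{\Ho}(f)$ with $\om\in\DD$ because their coefficients are decreasing sequences divided by dyadically almost constant moments, which is the content of Lemma~\ref{le:Htildeequiv} and requires its own proof.
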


 \par\vspace{1em}

The proof of (i)$\Rightarrow$(iii) of Theorem~\ref{bounded Xp} has two steps. Firstly, we prove that $\om\in\DD$, and later on  the condition $M_p(\om)<\infty$ is obtained by using polynomials of the form $f_{N, M}(z)=\sum\limits_{k=N}^M \om_{2k}^{\alpha} (k+1)^{\beta}z^k,\,  N,M\in\N,\; \a, \b \in \mathbb{R}$ as test functions. Then, we see that any radial weight $\omega$ satisfying the condition  $M_p(\om)<\infty$, belongs to $\mathcal{M}$. This proves (ii)$\Leftrightarrow$(iii). The proof of (iii)$\Leftrightarrow$(iv) is a calculation based on known descriptions of the class $\DD$ \cite[Lemma~2.1]{PelSum14}. Finally, 
we prove  (iv)$\Rightarrow$(i) which is the most involved implication in the proof of Theorem~\ref{bounded Xp}.
In order to obtain it, we merge thecniques coming from complex and harmonic analysis, such as a very convenient description of the class $\DDD$, see Lemma~\ref{le:descriptionDDD} below, precise estimates of 
the integral means of order $p$ of the derivative of the kernels 
$K^\omega_u(z)=\frac{1}{z}\int_0^z B^\omega_u(z)\,du $, decomposition norm theorems and classical weighted inequalities for Hardy operators. 

Observe that both, the discrete condition $M_p(\om)<\infty$ and its continuous version $M_{p,c}(\om)<\infty$, are used in the proof
of Theorem~\ref{bounded Xp}. The first one 
follows  from  (i), and   the condition $M_{p,c}(\om)<\infty$ is employed to prove that
$T\,:\,X_p \to Y_p$ is bounded.

\vspace{1em}
As for the  case $p=1$ we obtain the following result.

\begin{theorem}
\label{bounded X_1}
Let  $\om$ be a radial weight, $X_1,Y_1\in \{H(\infty,1),H^1, D^1_{0}, HL(1)\}$ and   
 $T\in\{ \Ho, \widetilde{\Ho}\}$.  Then the following statements are equivalent:
\begin{itemize}
    \item[(i)] $T: X_1\to Y_1$ is bounded; 
    \item[(ii)] $\om \in \DD$ and the measure $\mu_\om$ defined as $d\mu_\om(z)= \om(z)\left(\int_0^{|z|} \frac{ds}{\omg(s)}\right)\,\chi_{[0,1)}(z)\,  dA(z)$ is a $1$-Carleson measure for $X_1$;
    \item[(iii)]  $\om \in \DD$ and satisfies the condition 
    \begin{equation*}
        \label{m1}
      M_{1,c}(\om)= \sup\limits_{a \in [0,1)}
        \frac{1}{1-a}\int_a^1 \om(t)\left(\int_0^t \frac{ds}{\omg(s)}\right)\,dt<\infty\textit{;}
        \end{equation*}
       \item[(iv)]  $\om \in \DDD$ and satisfies the condition   $ M_{1,c}(\om)<\infty$;
        \item[(v)]  $\om \in \DD$ and satisfies the condition   
        $M_{1,d}(\om)= \sup\limits_{a \in [0,1)} \frac{\omg(a)}{1-a} \left(\int_0^a \frac{ds}{\omg(s)}\right)<\infty$;
 \item[(vi)]  $\om \in \DD$ and satisfies the condition   
 \begin{equation*}
        \label{M1}  
 M_1(\om)= \sup \limits_{N \in \N} (N+1)\om_{2N}\sum\limits_{k=0}^{N}\frac{1}{(k+1)^2 \om_{2k}}<\infty .  
      \end{equation*}  
\end{itemize}

\end{theorem}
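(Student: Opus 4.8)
The plan is to close the cycle (i)$\Rightarrow$(vi)$\Leftrightarrow\cdots\Leftrightarrow$(ii)$\Rightarrow$(i), separating the function-theoretic equivalences (iii)--(vi) and the Carleson reformulation (ii) from the two operator-theoretic implications. The starting observation is that, by \eqref{eq:HpDpp<2} and the elementary bound $M_\infty(r,f)\le\sum_n|\widehat f(n)|r^n$ (which gives $\|f\|_{H(\infty,1)}\le\|f\|_{HL(1)}$), one has the chain $D^1_0\subset H^1\subset HL(1)\subset H(\infty,1)$. Thus $D^1_0$ carries the largest and $H(\infty,1)$ the smallest of the four norms, so boundedness of $T$ on an arbitrary admissible pair $(X_1,Y_1)$ implies boundedness of $T\colon D^1_0\to H(\infty,1)$ and is implied by boundedness of $T\colon H(\infty,1)\to D^1_0$. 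Hence it suffices to prove that the weight conditions are necessary for boundedness of $T\colon D^1_0\to H(\infty,1)$ and sufficient for boundedness of $T\colon H(\infty,1)\to D^1_0$. Both choices of operator are treated at once: writing $K^\om_t(z)=\frac1z\int_0^z B^\om_t(\z)\,d\z=\sum_{n\ge0}\frac{t^n z^n}{2(n+1)\om_{2n+1}}$, for $T\in\{\Ho,\widetilde{\Ho}\}$ we have the common majorant $|(Tf)'(z)|\le\int_0^1|f(t)|\,|(K^\om_t)'(z)|\,\om(t)\,dt$, while the test functions used for necessity will be nonnegative on $[0,1)$, where $\Ho$ and $\widetilde{\Ho}$ coincide.

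For the equivalences among (iii)--(vi), note first that (iii)$\Leftrightarrow$(v) is an unconditional integration by parts: since $\om=-\omg'$, $\omg(1)=0$ and $\lim_{t\to1}\omg(t)\int_0^t\frac{ds}{\omg(s)}=0$ (the latter because $\omg$ is nonincreasing), one obtains
\begin{equation*}
\int_a^1\om(t)\Big(\int_0^t\frac{ds}{\omg(s)}\Big)\,dt=\omg(a)\int_0^a\frac{ds}{\omg(s)}+(1-a),
\end{equation*}
so that $M_{1,c}(\om)=M_{1,d}(\om)+1$. The equivalence (v)$\Leftrightarrow$(vi) is the discretization of $M_{1,d}$: for $\om\in\DD$ one has $\om_{2N}\asymp\omg(1-\tfrac1N)$ and, splitting $[0,1)$ into the intervals $[1-\tfrac1k,1-\tfrac1{k+1}]$, $\int_0^{1-1/N}\frac{ds}{\omg(s)}\asymp\sum_{k=0}^N\frac{1}{(k+1)^2\om_{2k}}$, both by \cite[Lemma~2.1]{PelSum14}, whence $M_{1,d}(\om)\asymp M_1(\om)$. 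Finally (iv)$\Rightarrow$(iii) holds since $\DDD\subset\DD$, and for the converse it suffices to show that $\om\in\DD$ with $M_{1,d}(\om)<\infty$ forces $\om\in\Dd$: were $\om\notin\Dd$, then for every $K>1$ there would be $a$ with $\omg(b)\asymp\omg(a)$ for $b=1-\tfrac{1-a}{K}$, and the crude bound $\int_0^b\frac{ds}{\omg(s)}\ge\frac{b-a}{\omg(a)}$ would give $M_{1,d}(\om)\gtrsim K-1$, contradicting $M_{1,d}(\om)<\infty$. This establishes (iii)$\Leftrightarrow$(iv)$\Leftrightarrow$(v)$\Leftrightarrow$(vi).

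The reformulation (ii)$\Leftrightarrow$(iii) identifies $M_{1,c}(\om)$ with a Carleson-box condition. Since the mass of $\mu_\om$ over the tent $[a,1)$ equals $\mu_\om([a,1))=\int_a^1\om(t)\big(\int_0^t\frac{ds}{\omg(s)}\big)\,dt$, the defining inequality of $M_{1,c}(\om)$ is precisely $\mu_\om([a,1))\lesssim 1-a$. By the classical description of $1$-Carleson measures for $H^1$ this box condition is equivalent to the embedding $H^1\hookrightarrow L^1(\mu_\om)$, and the same box condition governs the embedding $X_1\hookrightarrow L^1(\mu_\om)$ for the remaining admissible $X_1$; hence (ii)$\Leftrightarrow$(iii). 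In fact only the embedding into $L^1(\mu_\om)$ for $X_1=H(\infty,1)$ will be needed below, and it follows from $\mu_\om([a,1))\lesssim 1-a$ by a layer-cake argument using that $M_\infty(\cdot,f)$ is nondecreasing.

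It remains to link (i) with the weight conditions. For necessity I use $T\colon D^1_0\to H(\infty,1)$: inserting the polynomials $f_{N,M}(z)=\sum_{k=N}^M\om_{2k}^{\alpha}(k+1)^{\beta}z^k$ (for suitable $\a,\b$, as in Theorem~\ref{bounded Xp}) and comparing $\|Tf_{N,M}\|_{H(\infty,1)}$ with $\|f_{N,M}\|_{D^1_0}$ yields $M_1(\om)<\infty$, while $\om\in\DD$ is extracted from the boundedness exactly as in the case $1<p<\infty$; this gives (i)$\Rightarrow$(vi). For sufficiency I show that (ii) implies $T\colon H(\infty,1)\to D^1_0$ is bounded. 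Using $\|g\|_{D^1_0}=|g(0)|+\int_\D|g'(z)|\,dA(z)$, the common majorant above, and Fubini,
\begin{equation*}
\int_\D|(Tf)'(z)|\,dA(z)\le\int_0^1|f(t)|\,\om(t)\Big(\int_\D|(K^\om_t)'(z)|\,dA(z)\Big)\,dt,
\end{equation*}
and the proof is reduced to the kernel estimate $\int_\D|(K^\om_t)'(z)|\,dA(z)\lesssim\int_0^t\frac{ds}{\omg(s)}$ for $\om\in\DDD$: granting it, the right-hand side is $\lesssim\int_0^1|f(t)|\,d\mu_\om(t)\lesssim\|f\|_{H(\infty,1)}$ by the Carleson embedding (ii), so $\|Tf\|_{D^1_0}\lesssim\|f\|_{H(\infty,1)}$ and (i) follows for every admissible pair. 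I expect this kernel estimate to be the main obstacle: it is the $p=1$ instance of the integral-mean bounds for $(K^\om_u)'$ already needed in Theorem~\ref{bounded Xp}, and obtaining it requires sharp two-sided control of the Bergman kernel $B^\om_t$ and of its radial antiderivative for $\om\in\DDD$, for which the description of $\DDD$ in Lemma~\ref{le:descriptionDDD} and the moment asymptotics $\om_{2n+1}\asymp\omg(1-\tfrac1n)$ are the essential inputs.
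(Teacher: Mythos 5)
Your overall architecture is sound, and two of its three blocks essentially coincide with the paper: the equivalences (ii)--(vi) are correct as you argue them (your unconditional integration by parts giving the exact identity $M_{1,c}(\om)=M_{1,d}(\om)+1$ is in fact cleaner than the paper's route, which assumes $\om\in\DD$ and invokes \cite[Lemma~3]{PPR20}), and your sufficiency step is the paper's argument: the kernel bound $\int_\D |(K^\om_t)'(z)|\,dA(z)\lesssim 1+\int_0^t \frac{ds}{\omg(s)}$ that you defer as ``the main obstacle'' is not something you would have to create, since it is exactly the $p=1$ case of \cite[Lemma~B]{PelRosa21}, which the paper cites; combined with the Carleson embedding for $H(\infty,1)$ and Tonelli, that closes (ii)$\Rightarrow$(i) just as you outline.

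The genuine gap is in the necessity step (i)$\Rightarrow$(vi). You propose to insert the block polynomials $f_{N,M}(z)=\sum_{k=N}^M \om_{2k}^{\alpha}(k+1)^{\beta}z^k$ ``as in Theorem~\ref{bounded Xp}'' and to extract $\om\in\DD$ ``exactly as in the case $1<p<\infty$''. This transfer fails for a structural reason: the whole $p>1$ testing scheme rests on Lemma~\ref{lema f test Mp}, i.e.\ on $\|f_{N,M}\|_{HL(p)}\asymp\|f_{N,M}\|_{D^p_{p-1}}$, whose proof uses Lemma~\ref{lema de los lambda} (M.~Riesz projection, unbounded on $H^1$) and \eqref{eq:Dn}, both false at $p=1$; indeed $\|\De_n\|_{H^1}\asymp n$, not $\asymp 1$. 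Concretely, for $\alpha=\beta=0$,
$$\Big\|\sum_{k=N}^{2N} z^k\Big\|_{HL(1)}\asymp 1,\qquad\text{while}\qquad \Big\|\sum_{k=N}^{2N} z^k\Big\|_{D^1_0}\gtrsim \Big\|\sum_{k=N}^{2N} z^k\Big\|_{H^1}=\Big\|\sum_{k=0}^{N} z^k\Big\|_{H^1}\asymp \log N,$$
and the same Dirichlet-kernel loss appears for your family in general, since for $\om\in\DD$ the coefficients $\om_{2k}^\alpha(k+1)^\beta$ are essentially constant on dyadic blocks. Since your reduction makes $D^1_0$ the domain space, the testing inequality you can actually write is $\|Tf_{N,M}\|_{H(\infty,1)}\lesssim\|f_{N,M}\|_{D^1_0}$, and the only available upper bound for the right-hand side exceeds $\|f_{N,M}\|_{HL(1)}$ by a factor $\log N$; the output is then only $(N+1)\om_{2N}\sum_{k=0}^N\frac{1}{(k+1)^2\om_{2k}}\lesssim\log N$ and $\om_{8N}\lesssim(\log N)\,\om_{12N}$, which gives neither $M_1(\om)<\infty$ nor $\om\in\DD$. (Note also that the exponents of Theorem~\ref{bounded Xp}, $\alpha=p'-1$, $\beta=p'-2$, have no finite meaning at $p=1$.) This is precisely why the paper changes test functions at $p=1$: membership in $\DD$ is extracted from $f_{\alpha,N}(z)=\sum_{k=0}^N(k+1)^{\alpha}z^k$, $\alpha>0$, whose $D^1_0$-norm is bounded by $(N+1)^{\alpha}$ through the smooth Ces\`aro polynomials $V_n$ of Lemma~\ref{pr:cesaro} (which, unlike $\De_n$, satisfy $\|V_n\|_{H^1}\asymp 1$), and the condition $M_{1,c}(\om)<\infty$ is obtained by testing the kernels $f_a(z)=\frac{1-a^2}{(1-az)^2}$, which satisfy $\|f_a\|_{D^1_0}\asymp 1$ and lose no logarithm. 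Without a substitute of this kind, your necessity argument does not close.
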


We recall that
 given a Banach space (or a complete metric space) $X$ of analytic functions on $\D$, a positive Borel measure
$\mu$ on $\D$ is called a $q$-Carleson measure for $X$
 if the identity 
operator $I_d:\, X\to L^q(\mu)$\index{$I_d$} is bounded. 
Carleson provided a geometric description of $p$-Carleson measures for Hardy spaces $H^p$, $0<p<\infty$,  
\cite[Chapter~9]{Duren}. These measures are called classical Carleson measures.
The proof of Theorem~\ref{bounded X_1} uses characterizations of Carleson measures for $X_1$-spaces,  universal Ces\`aro basis of polynomials and  some of the main ingredients  of the proofs of Theorem~\ref{bounded Xp} and \cite[Theorem~2]{PelRosa21}. 

Concerning the classes of radial weights $\DD$ and $M_{p,c}=\{ \omega: M_{p,c}(\omega)<\infty\}$, $1\le p<\infty$,
a standard weight, $\om (z)=(1-|z|)^{\b}$, $\b >-1$, satisfies the condition  $M_{p,c}(\omega)<\infty$ if and only if  $\b >\frac{1}{p}-1$, so 
$H_\omega: H^p\to H^p$ is bounded if and only if $\b >\frac{1}{p}-1$. Moreover, a calculation shows that  the exponential type weight
$\omega(r)=\exp\left( -\frac{1}{1-r}\right)\in M_{p,c}$ for any $p\in [1,\infty)$, but $\omega\notin\DD$, see 
 \cite[Example 3.2]{Sis} for further details. So, $\DD$ and $M_{p,c}$ are not included in each other.

\vspace{1em}
The  study of the radial weights $\omega$ such that $H_\omega: H^p\to H^p$ is bounded, has been previously considered in \cite{PelRosa21}. Indeed,
Theorem~\ref{bounded X_1} improves \cite[Theorem~2]{PelRosa21},  by removing the initial hypothesis $\omega\in\DD$. On the other hand, 
 \cite[Theorem~3]{PelRosa21} describes the weights $\omega\in\DD$ such that $H_\omega: L^p_{[0,1)}\to H^p$ is bounded, and consequently gives 
a sufficient condition for the boundedness of $H_\omega: H^p\to H^p$, $1<p<\infty$.
 The following improvement of \cite[Theorem~3]{PelRosa21} is a byproduct of Theorem~\ref{bounded Xp}.

\begin{corollary}\label{th:lphp}
Let  $\om$ be a radial weight and $1<p<\infty$. Let $Y_p\in \{H(\infty,p),H^p, D^p_{p-1}, HL(p)\}$ and 
 $T\in\{ \Ho, \widetilde{\Ho}\}$. Then the following statements are equivalent:
\begin{itemize}
    \item[(i)] $T:L^p_{[0,1)} \to Y_p$ is bounded;
    \item[(ii)]  $\omega\in \DDD$ and satisfies the condition
    \begin{equation*}
     m_p(\omega)=\sup\limits_{0<r<1}\left(1+\int_0^r \frac{1}{\omg(t)^p} dt\right)^{\frac{1}{p}}
     \left(\int_r^1 \om(t)^{p'}\,dt\right)^{\frac{1}{p'}} <\infty;  
    \end{equation*}
     \item[(iii)]  $\omega\in \DD$ and satisfies the condition $ m_p(\omega) <\infty$.  
    \end{itemize}
\end{corollary}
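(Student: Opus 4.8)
The plan is to obtain the three equivalences by recycling the ingredients of Theorem~\ref{bounded Xp}, while isolating the one structural difference: the domain $L^p_{[0,1)}$ is genuinely larger than (and not comparable to) the analytic spaces $X_p$, because the radial restriction of an $H^p$ function need not lie in $L^p_{[0,1)}$. This is precisely what turns the quantity $M_{p,c}(\om)$, built from $\omg(t)/(1-t)$, into the stronger $m_p(\om)$, built from $\om(t)$; indeed, Hardy's inequality applied to $\omg(t)=\int_t^1\om$ gives $M_{p,c}(\om)\lesssim m_p(\om)$, so the hypotheses here always imply those of Theorem~\ref{bounded Xp}. As elsewhere in the paper I would first reduce to $f\ge 0$, where $\Ho f=\widetilde{\Ho}f$, and work from the coefficient formula, which by \eqref{eq:B} reads $\widehat{\Ho f}(n)=\frac{1}{2(n+1)\om_{2n+1}}\int_0^1 f(t)\,t^n\,\om(t)\,dt$. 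The inclusions \eqref{eq:HpDpp<2}, \eqref{eq:HpDpp>2} and \eqref{Xp:Hinftyp} then let me treat the four choices of $Y_p$ simultaneously: for the sufficiency it is enough to bound $T$ into the smallest target ($D^p_{p-1}$ when $1<p\le2$ and $HL(p)$ when $2\le p<\infty$), while for the necessity a single family of test functions forces the conditions in every $Y_p$ at once.

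I would prove the cycle (i)$\Rightarrow$(iii)$\Rightarrow$(ii)$\Rightarrow$(i). The implication (ii)$\Rightarrow$(iii) is trivial, so for (iii)$\Rightarrow$(ii) only the upgrade $\DD\to\DDD$ is at stake, i.e. I must show that $\om\in\DD$ and $m_p(\om)<\infty$ force $\om\in\Dd$. Here I would use Hölder's inequality in the form $\omg(r)=\int_r^1\om\le\big(\int_r^1\om^{p'}\big)^{1/p'}(1-r)^{1/p}$, which inserted into $m_p(\om)<\infty$ yields the integral relation $\int_0^r\omg(t)^{-p}\,dt\lesssim(1-r)\,\omg(r)^{-p}$. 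Writing $F(r)=\int_0^r\omg^{-p}$, this says $F\lesssim(1-r)F'$, a differential inequality that propagates into power-type growth of $F$ and hence, together with $\om\in\DD$, into power-type decay of $\omg$; by the description of $\DDD$ in Lemma~\ref{le:descriptionDDD} this is exactly $\om\in\Dd$, so $\om\in\DD\cap\Dd=\DDD$.

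For the necessity (i)$\Rightarrow$(iii) I would use test functions, and this is where the domain $L^p_{[0,1)}$ is an advantage rather than an obstacle: the extremal functions for a weighted Hardy operator, essentially $\om^{\,p'-1}\chi_{[r,1)}$, are directly admissible in $L^p_{[0,1)}$, whereas in $H^p$ one must manufacture analytic surrogates. Feeding such $f\ge 0$ into $T$, bounding $\|Tf\|_{Y_p}$ from below by the positivity of the kernels $K^\om_t$, and comparing with $\|f\|_{L^p_{[0,1)}}$, I expect to recover $m_p(\om)<\infty$; the membership $\om\in\DD$ should come, as in the first step of (i)$\Rightarrow$(iii) of Theorem~\ref{bounded Xp}, from polynomial test functions, now measured in the $L^p_{[0,1)}$ norm.

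The main obstacle is the sufficiency (ii)$\Rightarrow$(i). A crude application of Minkowski's and Hölder's inequalities in the variable $t$ would collapse the supremum defining $m_p(\om)$ into a single integral and lose the result, so the argument must preserve the interaction between the $t$- and $z$-scales. I would split $f=\sum_j f\chi_{I_j}$ over the dyadic intervals $I_j=[1-2^{-j},1-2^{-j-1})$, estimate the Taylor coefficients of each $\widetilde{\Ho}(f\chi_{I_j})$ from the coefficient formula together with the integral-mean estimates of the kernels $K^\om_u$ and the description of $\DDD$ in Lemma~\ref{le:descriptionDDD}, and then reassemble the blocks by a Littlewood--Paley type decomposition-norm theorem for $Y_p$ (the block analysis being cleanest for $HL(p)$ and most delicate for $D^p_{p-1}$). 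This reduces everything to a one-dimensional weighted Hardy inequality whose Muckenhoupt constant is comparable to $m_p(\om)$, which the classical weighted Hardy inequality then handles. The one point demanding care, and the reason the weight $\om(t)^{p'}$ rather than $(\omg(t)/(1-t))^{p'}$ appears, is that Hölder's inequality on each $I_j$ pairs $f$ directly against $\om$, producing $\om^{p'}$, whereas the localized $H^p$-norm used in Theorem~\ref{bounded Xp} produces instead $(\omg/(1-t))^{p'}$; this is the sole source of the distinction between $m_p(\om)$ and $M_{p,c}(\om)$.
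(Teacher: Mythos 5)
Your proposal is mathematically viable, but it takes a genuinely different and much longer route than the paper, whose proof of this corollary is essentially a reduction-plus-citation argument. The paper's key observation is that, by Lemma~\ref{HLP C LP}, restriction to $[0,1)$ maps $Y_p$ boundedly into $L^p_{[0,1)}$, so (i) at once implies that $T\colon Y_p\to Y_p$ is bounded; Theorem~\ref{bounded Xp} then delivers $\om\in\DDD$ with no further work, and both the necessity and the sufficiency of $m_p(\om)<\infty$ are quoted from the proofs of Theorems~3 and~4 of \cite{PelRosa21}. You instead re-derive everything: the necessity by testing $\om^{p'-1}\chi_{[r,1)}$ and polynomials directly in $L^p_{[0,1)}$ (this works, but note the kernel lower bound $K^\om_r(s)\gtrsim 1/\omg(rs)$ that you need requires $\om\in\DD$, so the $\DD$ step must come first); the sufficiency by dyadic blocks and a weighted Hardy inequality (workable; alternatively one can rerun the Second Step of the proof of Theorem~\ref{bounded Xp} keeping $\om$ in place of $\widetilde\om$ --- the two Muckenhoupt conditions that appear reduce exactly to $m_p(\om)<\infty$ via \eqref{eq:r1}, and Lemma~\ref{le:Htildeequiv} then transfers the estimate among the four targets); and, most interestingly, the upgrade $\DD\to\DDD$ by the purely weight-theoretic differential inequality $1+F\le C(1-r)F'$, $F(r)=\int_0^r\omg^{-p}$. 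That last argument is correct and is something the paper does not do (it gets $\DDD$ operator-theoretically, by going around the cycle): integrating gives power-type growth of $1+F$, the two-sided bound $\omg(r)^p\asymp (1-r)/(1+F(r))$ (whose lower half uses $\om\in\DD$) converts this into power-type decay of $\omg$, and that is the characterization of $\Dd$ in Lemma~\ref{caract. D check}(ii) --- which is the lemma you should cite here, not Lemma~\ref{le:descriptionDDD}. What the paper's route buys is brevity; what yours buys is a self-contained proof plus the standalone fact that $\om\in\DD$ and $m_p(\om)<\infty$ force $\om\in\Dd$.

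One genuine error in your framing must be corrected: you justify the gap between $M_{p,c}$ and $m_p$ by asserting that ``the radial restriction of an $H^p$ function need not lie in $L^p_{[0,1)}$.'' This is false, and its negation is precisely the paper's shortcut: by \eqref{fejer} and Lemma~\ref{HLP C LP}, $\int_0^1|f(t)|^p\,dt\le\|f\|_{H(\infty,p)}^p\lesssim\|f\|_{Y_p}^p$, so restrictions of $Y_p$-functions always lie in $L^p_{[0,1)}$. The true asymmetry is the opposite one: $L^p_{[0,1)}$ contains functions (say, mass concentrated near $t=1$) that are not restrictions of $X_p$-functions, which is why $L^p$-boundedness is the \emph{stronger} hypothesis and $m_p$ the stronger condition; it is also why the integration by parts \eqref{eq:intparts}, which replaces $\om$ by $\widetilde\om$ against monotone majorants and produces $(\omg/(1-t))^{p'}$ in Theorem~\ref{bounded Xp}, is unavailable for general $f\in L^p_{[0,1)}$ --- your explanation of where $\om^{p'}$ comes from is right, but for this reason, not the one you state. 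The slip does not propagate into your actual estimates, which all run in the correct direction, but it hides from you the one-line proof of the $\DDD$ part that makes the paper's argument so short.
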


In relation to an analogous result to Corollary~\ref{th:lphp} for $p=1$, Theorem~\ref{bounded L1} below shows that 
the radial weights such that $T:L^1_{[0,1)} \to Y_1$ is bounded, where  $Y_1\in \{H(\infty,1),H^1, D^1_{0}, HL(1)\}$ and 
 $T\in\{ \Ho, \widetilde{\Ho}\}$,  are the weights $\omega\in\DDD$ such that
 $m_1(\om)= \esssup_{t \in [0,1)}
         \om(t)\left(1+\int_0^t \frac{ds}{\omg(s)}\right)<\infty.$ 
         
In view of the above findings, we compare the conditions $M_{p,c}(\om)<\infty$, $M_{p,d}(\om)<\infty$ and  $m_p(\om)<\infty$
in order to put the boundedness of  $T\,:\,X_p \to Y_p$ alongside the boundedness of $T:L^p_{[0,1)} \to Y_p$,
where $X_p, Y_p\in \{H(\infty,p),H^p, D^p_{p-1}, HL(p)\}$ and 
 $T\in\{ \Ho, \widetilde{\Ho}\}$ for $1\le p<\infty$. Bearing in mind
 \eqref{Xp:Hinftyp}, it is clear that the condition  $m_p(\om)<\infty$ implies that $M_{p,c}(\om)<\infty$, for any weight $\omega\in\DD$.
Moreover, observe that $M_{p,c}(\omega)<\infty$ if and only if
$$
\sup\limits_{0<r<1} \left(1+\int_0^r \frac{1}{\omg(t)^p} dt\right)^{\frac{1}{p}}
\left(\int_r^1 \left(\frac{\omg(t)}{1-t}\right)^{p'}\,dt\right)^{\frac{1}{p'}}<\infty,\quad\text{ when $1<p<\infty$}$$ and
$\sup\limits_{a \in [0,1)} \frac{\omg(a)}{1-a} \left(1+\int_0^a \frac{ds}{\omg(s)}\right)<\infty$ if and only if $M_{1,d}(\om)<\infty$. So, the conditions $M_{p,c}(\om)<\infty$ and $m_p(\om)<\infty$, 
 are equivalent for any $1\leq p < \infty$ whenever $\omega$ satisfies 
 the pointwise inequality
\begin{equation}\label{eq:regularityintro} 
  \om(t)\lesssim \frac{\widehat{\omega}(t)}{1-t},\quad t \in[0,1),
\end{equation}  
  and $\om \in \DD$. 
  The condition \eqref{eq:regularityintro} implies 
restrictions on the decay and  on the regularity of the weight, in fact if $\omega$ fulfills \eqref{eq:regularityintro} then $\omega$  cannot decrease rapidly and cannot oscillate strongly.
For instance, the exponential type weight
$\omega(r)=\exp\left( -\frac{1}{1-r}\right)$, 
which is a prototype of rapidly decreasing weight  (see \cite{PP}),
has the property  
$$\widehat{\omega}(r)\asymp  \omega(r) (1-r)^2,\quad 0\le r<1,$$
 so it does not satisfy   \eqref{eq:regularityintro}.
 On the other hand,  
  any regular or rapidly increasing weight satisfies \eqref{eq:regularityintro}. 
  Regular and rapidly increasing weights are large subclasses of $\DD$, see \cite[Section~1.2]{PR} for the definitions and examples of these classes of radial weights. However, 
 we  construct in Corollaries~\ref{co:comparisonLp-Xp} and \ref{co:comparisonL1-X1} weights $\omega\in\DDD$ 
   with a strong oscillatory behaviour so that
   $M_{p,c}(\om)<\infty$ and $m_p(\om)=\infty$, and consequently they do not satisfy \eqref{eq:regularityintro}.

\vspace{1em} 
With the aim of discussing some results concerning the  case $p=\infty$, we recall that the space $\BMOA$
consists of those functions in the Hardy space~$H^1$ that have
bounded mean oscillation
on the boundary of  $\D$~\cite{GiBMO}, and the Bloch space $\B$ is the space of all analytic functions
on $\D$ such that
\begin{displaymath}
\|f\|_{\mathcal{B}}=|f(0)|+\sup_{z\in\D}(1-|z|^2)\,|f'(z)|<\infty.
\end{displaymath}
We also
 consider the space $HL(\infty)$ of the $f(z)=\sum_{n=0}^\infty \fg(n) z^n \in\H(\D)$ such that
 $$\|f\|_{HL(\infty)}=\sup_{n\in\N\cup\{0\}}(n+1)\left| \fg(n)\right|<\infty.$$
The following chain of inclusions hold \cite{GiBMO}
\begin{equation}\label{eq:inftyintro}
HL(\infty)\subsetneq \BMOA \subsetneq \B.
\end{equation}

If $\om$ is a radial weight 
\begin{align*}\label{eq:noHinfty}
  H_{\omega}(1)(x)
  =\sum\limits_{n=0}^{\infty}\frac{\om_n}{2\om_{2n+1}(n+1)}x^n
  &\geq \frac{1}{2x}\sum\limits_{n=0}^{\infty}\frac{x^{n+1}}{n+1}=\frac{1}{2x}\log\left(\frac{1}{1-x}\right), \quad x\in (0,1),
\end{align*}
so  $H_{\omega}$ is not bounded on $H^\infty$.  As for the classical Hilbert matrix $H$, it is bounded from $H^\infty$ to $BMOA$ \cite[Theorem 1.2]{LaNoPa2012}.
So, it is natural wondering about the radial weights such that $H_{\om}:H^{\infty}\to \BMOA$ is  bounded.  The next result answers this question.

\begin{theorem}
\label{th:H infty Bloch intro}
Let $\om $ be a radial weight and 
let $T\in\{\Ho,\widetilde{\Ho}\}$. Then, the following statements are equivalent:
\begin{itemize}
\item[(i)] $T: H^{\infty}\to HL(\infty)$ is bounded;
\item[(ii)] $T :H^{\infty}\to \BMOA$ is bounded;
\item[(iii)]  $T: H^{\infty}\to \B$ is bounded;
\item [(iv)] $\om \in \DD$.
\end{itemize}
\end{theorem}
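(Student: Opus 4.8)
The plan is to run the cycle (iv)$\Rightarrow$(i)$\Rightarrow$(ii)$\Rightarrow$(iii)$\Rightarrow$(iv). The two middle links are free: by the continuous inclusions $HL(\infty)\subset\BMOA\subset\B$ from \eqref{eq:inftyintro}, boundedness of $T$ into the smaller target space immediately gives boundedness into the larger one, so (i)$\Rightarrow$(ii) and (ii)$\Rightarrow$(iii). All the content therefore sits in the sufficiency (iv)$\Rightarrow$(i) and the necessity (iii)$\Rightarrow$(iv), and I would reduce both to the single scalar quantity $\sup_{n}\om_n/\om_{2n+1}$.

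For (iv)$\Rightarrow$(i), integrating \eqref{eq:B} gives $K^\om_t(z)=\frac1z\int_0^z B^\om_t(\z)\,d\z=\sum_{n\ge0}\frac{(tz)^n}{2\om_{2n+1}(n+1)}$, so that $\widehat{T(f)}(n)=\frac{1}{2\om_{2n+1}(n+1)}\int_0^1 f(t)t^n\om(t)\,dt$ for $T=\Ho$, and the same with $|f|$ in place of $f$ for $T=\widetilde{\Ho}$. If $\|f\|_{H^\infty}\le1$ then $|f(t)|\le1$ on $[0,1)$, whence
\[
 (n+1)\,\bigl|\widehat{T(f)}(n)\bigr|\le\frac{1}{2\om_{2n+1}}\int_0^1 t^n\om(t)\,dt=\frac{\om_n}{2\om_{2n+1}},
\]
and thus $\|T(f)\|_{HL(\infty)}\le\frac12\sup_n\frac{\om_n}{\om_{2n+1}}$. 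To finish I would invoke the standard moment estimate $\om_n\asymp\omg\!\left(1-\frac{1}{n+1}\right)$ valid for $\om\in\DD$: since $1-\frac{1}{2n+2}=\tfrac12\bigl(1+(1-\frac{1}{n+1})\bigr)$, the doubling condition \eqref{abs1} yields $\om_n\asymp\om_{2n+1}$, so the supremum is finite.

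The necessity (iii)$\Rightarrow$(iv) is the heart of the matter, and here I would test against $f\equiv1\in H^\infty$. As $|1|=1$ the two operators coincide, $T(1)=\Ho(1)=\sum_{n\ge0}\frac{\om_n}{2\om_{2n+1}(n+1)}z^n$, and all its Taylor coefficients are nonnegative, so $\max_{|z|=r}|T(1)'(z)|=T(1)'(r)$. Writing $c_n=\om_n/\om_{2n+1}$, the condition $T(1)\in\B$ becomes $\sup_{0<r<1}(1-r)\sum_{n\ge1}\frac{n}{n+1}c_n r^{n-1}<\infty$, which a routine Abel--Ces\`aro comparison (legitimate since $c_n\ge1>0$) converts into the averaged bound $\sup_{N\in\N}\frac1N\sum_{n=1}^N c_n<\infty$. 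The plan is then to promote this averaged control to the genuine bound $\sup_n c_n<\infty$, and finally to read off $\om\in\DD$ from the reverse moment characterization, namely that $\om_n\asymp\om_{2n+1}$ forces \eqref{abs1}, using the descriptions of $\DD$ in \cite[Lemma~2.1]{PelSum14}.

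I expect the delicate step to be exactly this promotion of a Ces\`aro average to a pointwise bound, for the sequence $c_n=\om_n/\om_{2n+1}$ is in general not monotone. The tool I would rely on is the log-convexity of $x\mapsto\om_x$ (a direct consequence of the Cauchy--Schwarz inequality), which makes $\om_j/\om_{j+1}$ nonincreasing in $j$ and hence prevents $c_n=\prod_{j=n}^{2n}(\om_j/\om_{j+1})$ from exhibiting an isolated spike. Concretely, a large value of $c_N$ is linked through these monotone factors to comparably large values of $c_n$ on a fixed proportion of the indices below $N$, which contradicts the averaged bound unless $\sup_n c_n<\infty$. Carrying out this comparison carefully is, I believe, the only real obstacle; everything else is the bookkeeping of the cycle above.
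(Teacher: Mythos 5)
Your proposal is correct, and it differs from the paper precisely at the one implication the paper does not actually prove. The sufficiency (iv)$\Rightarrow$(i) and the chain (i)$\Rightarrow$(ii)$\Rightarrow$(iii) coincide with the paper's argument: the coefficient bound $(n+1)|\widehat{T(f)}(n)|\le \om_n/(2\om_{2n+1})$ combined with $\om_n\asymp\om_{2n+1}$ for $\om\in\DD$ (Lemma~\ref{caract. pesos doblantes}), and the continuous embeddings $HL(\infty)\subset\BMOA\subset\B$ of \eqref{eq:inftyintro}, whose quantitative form is Lemma~\ref{le:HlinftyQpembedding}. The necessity (iii)$\Rightarrow$(iv), however, is not reproved in the paper at all: it is quoted verbatim from \cite[Theorem~1]{PelRosa21}, whereas you give a direct, self-contained proof by testing only on $f\equiv1$. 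The step you flagged as delicate does go through, and here is the certificate: writing $c_n=\om_n/\om_{2n+1}$ and $d_j=\log(\om_j/\om_{j+1})\ge0$, log-convexity of $x\mapsto\om_x$ (Cauchy--Schwarz) makes $d_j$ nonincreasing, so for $\tfrac{3n}{4}\le m\le n$
\begin{equation*}
\log c_m=\sum_{j=m}^{2m}d_j\ \ge\ \sum_{j=n}^{2m}d_j\ \ge\ \frac{2m-n+1}{n+1}\sum_{j=n}^{2n}d_j\ \ge\ \tfrac12\log c_n,
\end{equation*}
where the middle inequality holds because a prefix average of a nonincreasing nonnegative sequence dominates the full average; hence $c_m\ge c_n^{1/2}$ on at least a quarter of the indices $m\le n$, and your Ces\`aro bound $\frac1n\sum_{m=1}^n c_m\le C$ (obtained from the Bloch condition by evaluating at $r=1-1/n$, legitimate since all coefficients are nonnegative) forces $c_n\le (4C)^2$ for $n\ge 4$. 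Since $\om_{2n}\ge\om_{2n+1}$, this gives $\sup_n\om_n/\om_{2n}\le\sup_n c_n<\infty$, which is $\om\in\DD$ by Lemma~\ref{caract. pesos doblantes}(v); and since $\widetilde{\Ho}(1)=\Ho(1)$, the argument covers both operators. What your route buys is a self-contained theorem (no appeal to \cite{PelRosa21}) and the formally stronger conclusion that the single condition $\Ho(1)\in\B$ already forces $\om\in\DD$, i.e.\ no uniformity over the unit ball of $H^\infty$ is needed; what the paper's route buys is brevity, since the necessity was already available in the literature.
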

The equivalence (iii)$\Leftrightarrow$(iv) was proved in \cite[Theorem~1]{PelRosa21}, so our contribution in Theorem~\ref{th:H infty Bloch intro} consists on proving the rest of equivalences.

Bearing in mind Theorems~\ref{bounded Xp}, \ref{bounded X_1} and \ref{th:H infty Bloch intro}, we deduce that 
 $T\in\{\Ho,\widetilde{\Ho}\}$ is bounded from $H^\infty$ to $HL(\infty)$  if 
 $T: X_p\to Y_p$ is bounded, where
 $X_p,Y_p\in \{H(\infty,p),H^p, D^p_{p-1}, HL(p)\}$, $1\le p<\infty$. We prove that this is a general phenomenon for Hilbert-type operators and parameters 
 $1\le q<p$.

\begin{theorem}\label{th:q<p}
Let $\om$ be radial weight, $T\in\{H_\omega,\widetilde{H_\omega}\}$ and $1\le q<p< \infty$.  Further, let $X_q,Y_q\in \{H^q, D^q_{q-1}, HL(q), H(\infty,q)\}$ and
$X_p,Y_p\in \{H^p, D^p_{p-1}, HL(p), H(\infty,p)\}$.
 If $T: X_q\to Y_q$ is bounded, then $T: X_p\to Y_p$
is bounded.
\end{theorem}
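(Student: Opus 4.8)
The plan is to strip away the operators and the spaces and reduce everything to a monotonicity property of the weight conditions. By Theorems~\ref{bounded Xp} and \ref{bounded X_1}, for a fixed exponent the boundedness of $T\in\{\Ho,\widetilde{\Ho}\}$ between \emph{any} admissible pair is equivalent to the single requirement that $\om\in\DD$ and $M_{p,c}(\om)<\infty$ (for exponent $>1$), respectively $\om\in\DD$ and $M_{1,c}(\om)<\infty$ (for exponent $1$). Since $\DD$ is independent of the exponent, the theorem reduces to the implication: \emph{if $1\le q<p<\infty$, $\om\in\DD$ and the exponent-$q$ condition holds, then $M_{p,c}(\om)<\infty$}. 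Granting this, condition (iv) of Theorem~\ref{bounded Xp} (which applies since $p>1$) gives boundedness of $T:X_p\to Y_p$ for every admissible $X_p,Y_p$, as desired. In particular neither the specific spaces nor the distinction between $\Ho$ and $\widetilde{\Ho}$ plays any further role.

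To prove the core implication I would first decouple $M_{p,c}$. Setting, for $1<p<\infty$,
\[
A_p(r)=\frac{1}{1-r}\int_0^r\Big(\frac{\omg(r)}{\omg(t)}\Big)^p dt,\qquad B_p(r)=\frac{1}{1-r}\int_r^1\Big(\frac{\omg(t)(1-r)}{\omg(r)(1-t)}\Big)^{p'} dt,
\]
an elementary computation shows $M_{p,c}(\om)=\sup_{0<r<1}A_p(r)^{1/p}B_p(r)^{1/p'}$, the normalising powers of $(1-r)$ and $\omg(r)$ cancelling exactly. Testing each integral on the subinterval adjacent to $r$ (namely $(2r-1,r)$ and $(r,\frac{1+r}{2})$) and using $\omg(r)\asymp\omg(\frac{1+r}{2})$, the hypothesis $\om\in\DD$ yields $A_p(r)\gtrsim1$ and $B_p(r)\gtrsim1$ uniformly for $r$ bounded away from $0$; since on compact subsets of $[0,1)$ finiteness is automatic, it follows that $M_{p,c}(\om)<\infty$ if and only if both $\sup_rA_p(r)<\infty$ and $\sup_rB_p(r)<\infty$. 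The same equivalence holds at the exponent $q>1$.

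Now the two factors separate, and this is exactly where $q<p$ is used. For $A$, the base $\omg(r)/\omg(t)$ lies in $(0,1]$ for $t\le r$, so $A_p(r)$ is decreasing in the exponent; hence $A_p(r)\le A_q(r)$ and $\sup_rA_q<\infty$ forces $\sup_rA_p<\infty$. For $B$, split $(r,1)$ by the sign of $\Phi(t,r)-1$, where $\Phi(t,r)=\frac{\omg(t)(1-r)}{\omg(r)(1-t)}$ is exponent-free: on $\{\Phi\le1\}$ one has $\Phi^{p'}\le1$, contributing at most $1$ to $B_p(r)$, while on $\{\Phi>1\}$ one has $\Phi^{p'}\le\Phi^{q'}$ because $p'\le q'$, contributing at most $B_q(r)$; thus $B_p(r)\le1+B_q(r)$ and $\sup_rB_q<\infty$ forces $\sup_rB_p<\infty$. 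This settles the range $1<q<p$. The case $q=1$ is analogous but needs care, since the exponent-$1$ condition has no genuine $B$-integral: here I pass to the equivalent form $M_{1,d}(\om)<\infty$ of Theorem~\ref{bounded X_1}, note that its defining quantity is precisely $A_1(r)=\frac{\omg(r)}{1-r}\int_0^r\frac{ds}{\omg(s)}$ so that $\sup_rA_p\le\sup_rA_1<\infty$ as before, and combine the bound $\int_0^t\frac{ds}{\omg(s)}\lesssim\frac{1-t}{\omg(t)}$ with the $\DD$ lower bound $\int_0^r\frac{ds}{\omg(s)}\gtrsim\frac{1-r}{\omg(r)}$ to conclude that $\frac{\omg(t)}{1-t}\lesssim\frac{\omg(r)}{1-r}$ for $t\ge r$; this makes $\Phi(t,r)\lesssim1$, whence $B_p(r)\lesssim1$ and $M_{p,c}(\om)<\infty$.

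I expect the main obstacle to be the second factor. The monotonicity of $A_p$ in the exponent is immediate, whereas $B_p$ carries the conjugate exponent $p'$, which moves the \emph{opposite} way ($p'\le q'$), and the base $\Phi$ straddles $1$, so no pointwise monotonicity is available; the two-region split is what saves the argument for $q>1$, and for $q=1$ one must instead extract the regularity that $\omg(t)/(1-t)$ is essentially decreasing from $M_{1,d}(\om)<\infty$. A subsidiary point, and the only place the doubling class $\DD$ is actually invoked in the core step, is the exact decoupling $M_{p,c}(\om)=\sup_rA_p(r)^{1/p}B_p(r)^{1/p'}$ together with the uniform lower bounds $A_p,B_p\gtrsim1$, which is what allows the two factors to be controlled independently.
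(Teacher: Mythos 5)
Your proof is correct, and its skeleton is the same as the paper's: both reduce the statement, via Theorems~\ref{bounded Xp} and~\ref{bounded X_1}, to the implication ``$\om\in\DD$ plus the exponent-$q$ condition implies $M_{p,c}(\om)<\infty$'', and then invoke Theorem~\ref{bounded Xp}(iv) to conclude. Where you genuinely differ is in how that implication is proved. The paper manipulates the raw quantities: H\"older's inequality on $(r,1)$ passes from the $L^{p'}$ integral to the $L^{q'}$ one at the cost of a factor $(1-r)^{1/p'-1/q'}$, the pointwise bound $\omg(t)^{-p}\le \omg(r)^{q-p}\omg(t)^{-q}$ handles the first factor, and the stray powers of $\omg(r)$ and $(1-r)$ are cancelled using the $\DD$-estimate $1+\int_0^r \omg(s)^{-q}\,ds\gtrsim (1-r)\,\omg(r)^{-q}$ together with the identity $\frac{1}{p'}-\frac{1}{q'}=\frac{1}{q}-\frac{1}{p}$. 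You normalize first, so that $M_{p,c}(\om)=\sup_r A_p(r)^{1/p}B_p(r)^{1/p'}$ with dimensionless factors; $\DD$ is then used only to get the lower bounds $A_p,B_p\gtrsim 1$ that decouple the supremum of the product into two separate suprema (note you need this for \emph{both} factors, whereas the paper needs only the $A$-side estimate), after which everything follows from pointwise monotonicity in the exponent, $A_p\le A_q$ and $B_p\le 1+B_q$. In normalized language the paper's H\"older step is precisely the power-mean inequality $B_p^{1/p'}\le B_q^{1/q'}$ for the probability measure $dt/(1-r)$, slightly sharper than your splitting inequality $x^{p'}\le 1+x^{q'}$; your route buys complete elementarity and makes transparent exactly where $q<p$ and $\DD$ enter, while the paper's is shorter and dispenses with the decoupling. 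For $q=1$ the two arguments coincide in substance: both extract from $M_{1,d}(\om)<\infty$ and $\DD$ that $\omg(t)/(1-t)$ is essentially decreasing, you pointwise and the paper through the integral estimate $\int_r^1(\omg(s)/(1-s))^{p'}ds\lesssim \omg(r)^{p'}(1-r)^{1-p'}$. Two details you gloss over, neither fatal: the bounds $A_p,B_p\gtrsim 1$ and $\int_0^r\omg(s)^{-1}ds\gtrsim(1-r)/\omg(r)$ hold as stated only for $r$ bounded away from $0$ (say $r\ge \tfrac12$; small $r$ is handled trivially, since there $A_p(r)\le 1$ and $\Phi(t,r)\lesssim 1$), and ``finiteness is automatic on compact subsets'' is not literally true for $B_p$, whose defining integral contains the tail at $1$ and can diverge for a general weight --- its finiteness must be, and is, extracted from the hypothesis $M_{q,c}(\om)<\infty$ evaluated at a single point such as $r=\tfrac12$.
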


We also prove that that there does not exist radial weights $\omega$ such that $H_\omega: X_p\to Y_p$
is compact, where  $X_p,Y_p\in \{H^p, D^p_{p-1}, HL(p), H(\infty,p)\}$ and $1\le p< \infty$, neither radial weights such that $H_{\omega}:H^{\infty} \to \B$ is compact, see Theorems \ref{pr:nocompactxp}, \ref{pr:nocompactx1}, \ref{th:nocompactoHinfty} below.

The letter $C=C(\cdot)$ will denote an absolute constant whose value depends on the parameters indicated
in the parenthesis, and may change from one occurrence to another.
We will use the notation $a\lesssim b$ if there exists a constant
$C=C(\cdot)>0$ such that $a\le Cb$, and $a\gtrsim b$ is understood
in an analogous manner. In particular, if $a\lesssim b$ and
$a\gtrsim b$, then we write $a\asymp b$ and say that $a$ and $b$ are comparable.
We remark that if $a$ or $b$ are quantities which depends on a radial weight $\omega$,
 the constant $C$ such that  $a\lesssim b$ or  $a\gtrsim b$ may depend on $\omega$ but it does not depend on $a$ neither on $b$.

\vspace{1em}
The rest of the paper is organized as follows. Section~\ref{sec:pre} is devoted to prove some auxiliary results. 
 We prove   Theorem~\ref{bounded Xp} and Corollary \ref{th:lphp} in Section~\ref{sec:p>1},  and
Theorem~\ref{bounded X_1} is proved  in Section~\ref{sec:p=1}. 
Section~\ref{sec:infty} contains a proof of
Theorem~\ref{th:H infty Bloch intro} 
and Theorem~\ref{th:q<p} is proved in Section~\ref{sec:q<p} together with some reformulations of the condition $M_{p,c}(\omega)<\infty$.

\section{Preliminary results}\label{sec:pre}

In this section, we will prove some convenient preliminary results which will be repeatedly used throughout the rest of the paper.   
The first auxiliary lemma contains several characterizations of upper  doubling radial weights. For a proof, see \cite[Lemma~2.1]{PelSum14}.

\begin{letterlemma}
\label{caract. pesos doblantes}
Let $\om$ be a radial weight on $\D$. Then, the following statements are equivalent:
\begin{itemize}
    \item[(i)] $\om \in \DD$;
    \item[(ii)] There exist $C=C(\om)\geq 1$ and $\b_0=\b_0(\om)>0$ such that
    $$ \omg(r)\leq C \left(\frac{1-r}{1-t}\right)^{\b}\omg(t), \quad 0\leq r\leq t<1;$$
   for all $\b\geq \b_0$.
   \item[(iii)] $$ \int_0^1 s^x \om (s) ds\asymp \omg\left(1-\frac{1}{x}\right),\quad x \in [1,\infty);$$
   \item[(iv)] There exists $C=C(\om)>0$ and $\b=\b(\om)>0$ such that 
   $$ \om_x\leq C \left(\frac{y}{x}\right)^{\b}\om_y,\quad 0<x\leq y<\infty ;$$
     \item[(v)] $ \DD(\om)=\sup\limits_{n\in \N}\frac{\om_n}{\om_{2n}}<\infty .$
\end{itemize}
\end{letterlemma}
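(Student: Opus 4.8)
The plan is to prove the five statements equivalent through a web of implications organized around (iii), which I regard as the hub, together with two ``self-similar'' equivalences that are pure iteration arguments. First I would dispose of (i)$\Leftrightarrow$(ii). For (i)$\Rightarrow$(ii), given $0\le r\le t<1$ I would iterate the defining inequality $\omg(\rho)\le C\omg(\frac{1+\rho}{2})$ exactly $n=\lceil\log_2\frac{1-r}{1-t}\rceil$ times; the $n$-th halving point $\rho_n$ satisfies $1-\rho_n=2^{-n}(1-r)\le 1-t$, so monotonicity of $\omg$ gives $\omg(r)\le C^n\omg(\rho_n)\le C\,(\frac{1-r}{1-t})^{\log_2 C}\omg(t)$; thus $\beta_0=\log_2 C$ works, and every larger $\beta$ works since $\frac{1-r}{1-t}\ge1$. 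The converse is immediate on taking $t=\frac{1+r}{2}$. The equivalence (iv)$\Leftrightarrow$(v) is the very same argument carried out on the moments: (iv)$\Rightarrow$(v) is the case $x=n$, $y=2n$, while (v)$\Rightarrow$(iv) follows by iterating $\om_n\le C\om_{2n}$ along dyadic scales and filling the gaps by the monotonicity of $x\mapsto\om_x$.

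Next I would isolate the two one-sided estimates behind the comparability (iii). The lower bound $\om_x\gtrsim\omg(1-1/x)$ holds for \emph{every} radial weight: on $[1-1/x,1]$ one has $s^x\ge(1-1/x)^x\gtrsim1$, hence $\om_x\ge\int_{1-1/x}^1 s^x\om\gtrsim\omg(1-1/x)$. The reverse bound $\om_x\lesssim\omg(1-1/x)$ is the substantive half, and this is where doubling enters. To deduce it from (ii) I would integrate by parts to write $\om_x=x\int_0^1 s^{x-1}\omg(s)\,ds$, split at $1-1/x$, bound the near-$1$ piece trivially by $\omg(1-1/x)$, and on $[0,1-1/x]$ insert the power estimate $\omg(s)\le C\,(x(1-s))^{\beta}\omg(1-1/x)$ furnished by (ii) (with the roles $r=s$, $t=1-1/x$). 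The leftover factor $x^{1+\beta}\int_0^1 s^{x-1}(1-s)^{\beta}\,ds=x^{1+\beta}B(x,\beta+1)$ stays bounded by Stirling's formula, yielding $\om_x\lesssim\omg(1-1/x)$. This proves (ii)$\Rightarrow$(iii).

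To close the tail side I would prove (iii)$\Rightarrow$(i) by contraposition. If doubling fails, there are points $r$ with $\omg(r)\ge C\omg(\frac{1+r}{2})$ for $C$ arbitrarily large, so the mass of $\om$ on $[r,\frac{1+r}{2}]$ equals $\omg(r)-\omg(\frac{1+r}{2})\ge(1-\tfrac1C)\omg(r)$. Taking the moment of order $x=\frac{2}{1-r}$, on this interval $s^x\gtrsim1$, whence $\om_x\gtrsim\omg(r)$, while $\omg(1-1/x)=\omg(\frac{1+r}{2})\le\frac1C\omg(r)$; therefore $\om_x/\omg(1-1/x)\gtrsim C\to\infty$, contradicting (iii). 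This secures (i)$\Leftrightarrow$(ii)$\Leftrightarrow$(iii). The implication (i)$\Rightarrow$(v) is then painless: by (iii) and tail doubling, $\om_n\asymp\omg(1-1/n)\le C\,\omg(1-1/(2n))\asymp\om_{2n}$.

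The one genuinely delicate step, and the main obstacle, is to recover a comparability statement from a purely moment-theoretic growth condition, that is, (iv)$\Rightarrow$(iii). A single failure of tail doubling need \emph{not} force a single failure of moment doubling, because the offending mass gets smeared across all moment scales; hence a multiscale argument is unavoidable. I would decompose the moment into exponential shells, $\om_x\le\omg(1-\lambda/x)+\sum_{j\ge1}e^{-2^{j-1}\lambda}\,\omg(1-2^{j}\lambda/x)$, bound each tail by a moment via the universal estimate $\omg(1-2^{j}\lambda/x)\lesssim\om_{x/(2^{j}\lambda)}$, and control the latter by $\om_x$ using the power law (iv), namely $\om_{x/(2^{j}\lambda)}\le C\,(2^{j}\lambda)^{\beta}\om_x$. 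The resulting series $\sum_{j\ge1}e^{-2^{j-1}\lambda}(2^{j}\lambda)^{\beta}$ can be made smaller than $\frac12$ by fixing $\lambda$ large, which lets me absorb it and conclude $\om_x\le 2\,\omg(1-\lambda/x)$. A bootstrap — applying this at $\lambda x$ and invoking (iv) once more to pass from $\om_{\lambda x}$ back to $\om_x$ — upgrades the coarse scale $1-\lambda/x$ to the correct scale $1-1/x$, giving $\om_x\lesssim\omg(1-1/x)$ and hence (iii). Together with (iv)$\Leftrightarrow$(v), (i)$\Leftrightarrow$(ii)$\Leftrightarrow$(iii) and (i)$\Rightarrow$(v), this closes the circle and establishes the equivalence of all five statements.
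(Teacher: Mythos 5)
The paper itself does not prove this lemma: it is quoted as a known result, with the proof deferred to \cite[Lemma~2.1]{PelSum14}, so there is no in-paper argument to compare against. Judged on its own merits, your proposal is correct in substance, and it in fact runs along the same lines as the standard published proof: iteration of the doubling inequality for (i)$\Leftrightarrow$(ii) and for (iv)$\Leftrightarrow$(v), integration by parts plus the Euler Beta-function/Stirling estimate $x^{1+\beta}B(x,\beta+1)=O(1)$ for (ii)$\Rightarrow$(iii), and an exponential-shell decomposition with an absorption argument and a bootstrap to pass from moment growth back to tail estimates, this last step (your (iv)$\Rightarrow$(iii)) being the genuine content, exactly as in the literature. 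Your organization, with (iii) as the hub rather than a single cycle of implications, differs only cosmetically.

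Three small points need patching, all harmless because the constants in the lemma are allowed to depend on $\om$ and because of the paper's standing assumption that $\omg(r)>0$ for all $r\in[0,1)$. First, in (iii)$\Rightarrow$(i) you claim $s^x\gtrsim 1$ on $[r,\frac{1+r}{2}]$ for $x=\frac{2}{1-r}$; since $r^{2/(1-r)}\to 0$ as $r\to 0^+$, this is false with an absolute constant if the failure points of doubling were allowed to accumulate at $0$. They cannot: for $r\le\frac12$ one has $\omg(r)/\omg\left(\frac{1+r}{2}\right)\le\omg(0)/\omg(3/4)<\infty$, so once $C$ exceeds this fixed (weight-dependent) number, every failure point satisfies $r>\frac12$, where $r^{2/(1-r)}$ is bounded below; this observation, which uses the standing assumption $\omg>0$, should be stated. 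Second, the universal lower bound $\om_x\gtrsim\omg(1-1/x)$ via $(1-1/x)^x\gtrsim 1$ degenerates at $x=1$, where $(1-1/x)^x=0$; handle $x\in[1,2]$ separately, e.g. $\om_x\ge\om_2\ge\frac14\,\omg\left(\frac12\right)$, at the cost of a weight-dependent constant. Third, in (v)$\Rightarrow$(iv) the dyadic iteration together with monotonicity only reaches indices $x\ge1$; for $0<x\le1$ you need additionally $\om_x\le\om_0\le\frac{\om_0}{\om_1}\,\om_1$, again a weight-dependent constant, which the statement permits (note $\om_1>0$ by the standing assumption). The same convention ($\omg$ of a nonpositive argument read as $\omg(0)$, with weight-dependent comparison to $\om_y$ for $y\le 2$) is implicitly needed for the clipped shells in your (iv)$\Rightarrow$(iii). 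With these remarks inserted, the proof is complete.
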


We will also use the following characterizations of the class $\Dd$, 
see \cite[(2.27)]{PR19}.
\begin{letterlemma}
\label{caract. D check}
Let $\om$ be a radial weight. The following statements are equivalent:
\begin{itemize}
\item[(i)] $\om	\in \Dd$;
\item[(ii)] There exist $C=C(\om)>0$ and $\a_0=\a_0(\om)>0$ such that
$$\omg(s)\leq C \left(\frac{1-s}{1-t}\right)^{\a}\omg(t), \quad 0\leq  t\leq s<1$$
for all $0<\alpha\le \alpha_0$;
\item[(iii)]There exist $K=K(\om)>1 $ and $C=C(\om)>0 $ such that
\begin{equation}
\label{D chek y k}
\int_r^{1-\frac{1-r}{K}}\om (s)ds \geq C \omg(r), \quad 0\leq r<1.
\end{equation}
\end{itemize}
\end{letterlemma}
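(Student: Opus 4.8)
The plan is to close the cycle among the three conditions by exploiting throughout that $\omg$ is a decreasing function and that each statement is really a quantitative description of how fast $\omg$ decays toward the boundary. I would first dispose of the equivalence (i)$\Leftrightarrow$(iii), which I expect to be pure bookkeeping. Writing $s=1-\frac{1-r}{K}$, the fundamental theorem of calculus gives
$$\int_r^{1-\frac{1-r}{K}}\om(t)\,dt=\omg(r)-\omg\left(1-\frac{1-r}{K}\right),$$
so \eqref{D chek y k} with a constant $C\in(0,1)$ reads $\omg\!\left(1-\frac{1-r}{K}\right)\le(1-C)\,\omg(r)$, which is exactly the defining inequality of $\Dd$ in (i) with constant $(1-C)^{-1}>1$ and the same $K$; reversing this dictionary gives the converse.

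Next I would handle (ii)$\Rightarrow$(i) by specialization. Fixing any exponent $\alpha\le\alpha_0$ and inserting $s=1-\frac{1-t}{K}$ into the inequality of (ii) gives $\frac{1-s}{1-t}=K^{-1}$, hence $\omg\!\left(1-\frac{1-t}{K}\right)\le C\,K^{-\alpha}\,\omg(t)$. Choosing $K=K(\om)$ so large that $C\,K^{-\alpha}\le\frac12$ (possible since $\alpha>0$ is fixed) yields (i) with constant $2$.

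The substantive step, and the one I expect to carry the real content, is (i)$\Rightarrow$(ii), which is an iteration. Let $K,C$ be the constants from (i), set $T(r)=1-\frac{1-r}{K}$, and write $t_n=T^{(n)}(t)$, so that $1-t_n=(1-t)K^{-n}$. Iterating (i) gives $\omg(t_n)\le C^{-n}\omg(t)$. Given $t\le s<1$, pick the integer $n\ge0$ with $t_n\le s<t_{n+1}$, i.e. $K^{n}\le\frac{1-t}{1-s}<K^{n+1}$ (such $n$ exists because $s\ge t$ forces $\frac{1-t}{1-s}\ge1$). Monotonicity of $\omg$ then gives $\omg(s)\le\omg(t_n)\le C^{-n}\omg(t)$, and setting $\alpha_0=\frac{\log C}{\log K}>0$ so that $C=K^{\alpha_0}$ we rewrite $C^{-n}=(K^{n})^{-\alpha_0}$ and use $K^{n}>K^{-1}\frac{1-t}{1-s}$ to obtain
$$\omg(s)\le\left(K^{n}\right)^{-\alpha_0}\omg(t)<K^{\alpha_0}\left(\frac{1-s}{1-t}\right)^{\alpha_0}\omg(t)=C\left(\frac{1-s}{1-t}\right)^{\alpha_0}\omg(t).$$
This establishes (ii) for the single exponent $\alpha_0$. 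The quantifier ``for all $0<\alpha\le\alpha_0$'' then comes for free: since $s\ge t$ forces $\frac{1-s}{1-t}\le1$, the map $\alpha\mapsto\left(\frac{1-s}{1-t}\right)^{\alpha}$ is non-increasing, so the bound at $\alpha_0$ dominates the desired bound at every smaller $\alpha$ with the same constant $C$.

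The only points needing care are the trivial endpoint $s=t$ (where $n=0$ and the estimate is immediate), the matching of constants in the (i)$\leftrightarrow$(iii) dictionary, and checking that the iterates $t_n$ stay in $[0,1)$; none of these is a genuine obstacle. The one place where I expect a slightly subtle observation rather than a routine computation is producing the \emph{whole} range of exponents $0<\alpha\le\alpha_0$ in (ii) from a single iteration, which is exactly what the monotonicity-in-$\alpha$ remark above resolves. Thus I expect the geometric iteration in (i)$\Rightarrow$(ii) to be the heart of the proof, with the remaining implications reducing to algebraic reformulations.
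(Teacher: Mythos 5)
Your proof is correct and complete. Note that the paper does not actually prove this lemma: it is quoted as a known result from \cite[(2.27)]{PR19}, and your argument --- the telescoping identity $\int_r^{1-\frac{1-r}{K}}\om(s)\,ds=\omg(r)-\omg\bigl(1-\frac{1-r}{K}\bigr)$ for (i)$\Leftrightarrow$(iii), the specialization $s=1-\frac{1-t}{K}$ with $K$ large for (ii)$\Rightarrow$(i), and the geometric iteration $1-t_n=(1-t)K^{-n}$ with $\alpha_0=\log C/\log K$ for (i)$\Rightarrow$(ii) --- is exactly the standard argument used in that reference, including the observation that the exponent range $0<\alpha\le\alpha_0$ follows from $\frac{1-s}{1-t}\le1$. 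The only micro-gap is in the (iii)$\Rightarrow$(i) dictionary, where you tacitly take the constant $C$ of \eqref{D chek y k} to lie in $(0,1)$; this is harmless, since either one observes that the standing assumption $\omg>0$ forces $C<1$ there, or one simply replaces $C$ by $\min\{C,1/2\}$, for which \eqref{D chek y k} still holds.
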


Embedding relations among spaces $X_p,Y_p\in \{H^p, D^p_{p-1}, HL(p), H(\infty,p)\}$ are quite useful in the study of 
operators acting on them. In particular,  we recall that
\begin{equation}\label{fejer}
\|f\|_{H(\infty,p)}\le C_p \|f\|_{X_p},\quad 0< p<\infty,
\end{equation}
for $X_p\in\{ H^p,D^p_{p-1}\}$, see \cite[p. 127]{Pomm} and \cite[Lemma~4]{GaGiPeSis}. 

This inequality is no longer true for $X_p=HL(p)$ if $0<p<1$.  In fact, take 
$f(z)=\sum_{n=0}^\infty 2^{\frac{n}{p}}z^{2^n}$.
A calculation shows that $f\in HL(p)$,
 if $0<p<1$. However, using \cite[Theorem~1]{MatPavPAMS83}, 
$$\|f\|^p_{H(\infty,p)}=  \int_0^1 \left(\sum_{n=0}^\infty 2^{\frac{n}{p}}s^{2^n}\right)^p\,ds
\asymp \sum_{n=0}^\infty 1=\infty.$$

Our following result extends the inequality \eqref{fejer}  to $X_p=HL(p)$ and $1\le p<\infty$.

\begin{lemma}
\label{HLP C LP}
Let $1\le p<\infty$. Then, there is $C_p>0$ such that
$$\|f\|_{H(\infty,p)}\le C_p \|f\|_{X_p},\quad f\in\H(\D),$$
where $X_p\in\{ H^p,D^p_{p-1}, HL(p)\}$.
\end{lemma}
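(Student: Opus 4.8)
The goal is to establish the Fejér--Riesz type inequality
$$\|f\|_{H(\infty,p)}\le C_p\|f\|_{HL(p)},\quad 1\le p<\infty,$$
since the cases $X_p\in\{H^p,D^p_{p-1}\}$ are already contained in \eqref{fejer}. Unwinding the definitions, what must be shown is
$$\int_0^1 M_\infty^p(r,f)\,dr\lesssim \sum_{n=0}^\infty |\widehat{f}(n)|^p(n+1)^{p-2}.$$
The plan is to bound $M_\infty(r,f)$ pointwise by the absolute value of the Taylor series and then integrate. Indeed, $M_\infty(r,f)\le\sum_{n=0}^\infty|\widehat{f}(n)|r^n$, so it suffices to control $\int_0^1\bigl(\sum_n|\widehat{f}(n)|r^n\bigr)^p\,dr$. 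Setting $a_n=|\widehat{f}(n)|$, the problem reduces to the purely coefficient-theoretic inequality
$$\int_0^1\Bigl(\sum_{n=0}^\infty a_n r^n\Bigr)^p\,dr\lesssim \sum_{n=0}^\infty a_n^p(n+1)^{p-2},$$
valid for nonnegative sequences $(a_n)$, which is a classical Hardy--Littlewood inequality (see \cite{Flett,LP}).

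\textbf{First approach (direct, via a known Hardy--Littlewood inequality).} The cleanest route is to invoke the classical result that for $1\le p<\infty$ and nonnegative $a_n$,
$$\int_0^1\Bigl(\sum_{n=0}^\infty a_n r^n\Bigr)^p\,dr\asymp \sum_{n=0}^\infty a_n^p(n+1)^{p-2}.$$
Combined with $M_\infty(r,f)\le\sum_n a_n r^n$ this finishes the proof immediately. If one prefers a self-contained argument, I would dyadically block the coefficients: for the intervals $r\in[1-2^{-k},1-2^{-k-1}]$ one has $(1-r)\asymp 2^{-k}$, and $\sum_n a_n r^n$ is comparable to $\sum_{n\lesssim 2^k} a_n$ (the tail $n\gtrsim 2^k$ contributes a geometrically decaying factor through $r^n$). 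One then estimates $\sum_{n\le 2^k}a_n$ by Hölder against the weight $(n+1)^{p-2}$ and its conjugate, and sums the resulting geometric-type series over $k$.

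\textbf{Alternative: interpolation or reduction to endpoint cases.} Another viable strategy observes that the estimate is trivial at $p=2$, where $HL(2)=H^2$ and $\|f\|_{H(\infty,2)}\le\|f\|_{H^2}$ by \eqref{fejer}; one could then attempt to leverage the behaviour at $p=1$ together with an interpolation argument. However, since the target space $H(\infty,p)$ and the source $HL(p)$ both vary with $p$ in a way that is awkward for standard interpolation, I expect the direct coefficient estimate above to be more robust and is the approach I would commit to.

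\emph{Main obstacle.} The delicate point is the regime $1\le p<2$, where the weight exponent $p-2$ is negative, so the Hölder splitting of $\sum_{n\le 2^k}a_n$ must be carried out carefully to produce a convergent dual sum and a summable series in $k$; one must verify that the conjugate-exponent sum $\sum_{n\le 2^k}(n+1)^{(p-2)/(p-1)\cdot(-1)}$ grows at the correct polynomial rate so that, after raising to the appropriate power and multiplying by the measure $2^{-k}$ of the dyadic block, summation over $k$ converges. At $p=1$ the inequality degenerates pleasantly (the right side is $\sum_n a_n(n+1)^{-1}$ and the left side is $\int_0^1\sum_n a_n r^n\,dr=\sum_n a_n/(n+1)$, giving in fact equality), so the genuine work is entirely in confirming the constants and convergence for $1<p<2$.
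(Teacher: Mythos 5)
You take essentially the same route as the paper: it, too, reduces to $X_p=HL(p)$ via \eqref{fejer}, bounds $M_\infty(r,f)\le\sum_{n}|\widehat f(n)|r^n$, and then invokes \cite[Theorem~1]{MatPavPAMS83}, i.e.\ the dyadic-block description $\int_0^1\bigl(\sum_n a_nr^n\bigr)^p\,dr\asymp a_0^p+\sum_{n}2^{-n}\bigl(\sum_{k=2^n}^{2^{n+1}-1}a_k\bigr)^p$, followed by H\"older's inequality within each block --- which is exactly the self-contained blocking argument you sketch. One point needs correcting, although it does not affect the validity of your proof: the two-sided equivalence $\int_0^1\bigl(\sum_n a_nr^n\bigr)^p\,dr\asymp\sum_n a_n^p(n+1)^{p-2}$ that you attribute to Hardy--Littlewood is false for every $p>1$ (it is classical only under monotonicity assumptions on the coefficients): already $f(z)=z^N$ gives $\int_0^1 r^{Np}\,dr\asymp N^{-1}$ while $\sum_n a_n^p(n+1)^{p-2}=(N+1)^{p-2}$, so the direction ``sum $\lesssim$ integral'' fails as $N\to\infty$. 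Only the direction you actually need, $\int_0^1\bigl(\sum_n a_nr^n\bigr)^p\,dr\lesssim\sum_n a_n^p(n+1)^{p-2}$ for $1\le p<\infty$ and $a_n\ge0$, is true, and it is precisely what the block decomposition plus block-wise H\"older yields (consistently, the paper's example preceding the lemma shows that even this one-sided inequality fails for $0<p<1$). With the claim weakened to that one-sided inequality, your argument is correct and coincides with the paper's proof.
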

\begin{proof}
By \eqref{fejer} it is enough to prove the inequality for $X_p=\hlp$.
 By \cite[Theorem~1]{MatPavPAMS83} and  H\"older's inequality 
\begin{align*}
\int_0^1 M^p_\infty(t,f)\,dt & \le \int_{0}^1 \left(\sum_{n=0}^\infty |\widehat{f}(n)|t^n\right)^p\,dt
\\ &\lesssim |\widehat{f}(0)|^p+ \sum_{n=0}^\infty 2^{-n}\left(\sum_{k=2^n}^{2^{n+1}-1} |\widehat{f}(k)|\right)^p
\\ & \le |\widehat{f}(0)|^p+\sum_{n=0}^\infty 2^{n(p-2)}\sum_{k=2^n}^{2^{n+1}-1} |\widehat{f}(k)|^p
\\ &\lesssim |\widehat{f}(0)|^p+\sum_{n=0}^\infty \sum_{k=2^n}^{2^{n+1}-1} (k+1)^{p-2}|\widehat{f}(k)|^p
= \|f\|_{\hlp}^p.
\end{align*}
This finishes the proof.
\end{proof}

For $0<p<\infty$ and $\omega$ a radial weight, let $L^p_{\omega, [0,1)}$ be the Lebesgue space of measurable functions such that 
$$\|f\|^p_{L^p_{\omega, [0,1)}}=\int_0^1 |f(t)|^p\omega(t)\,dt<\infty.$$
Next, we will prove that the sublinear operator $\widetilde{H_{\omega}}$ does not distinguish the norm of the spaces
$H(\infty,p), HL(p), D^p_{p-1}, H^p,$
when $1<p<\infty$ and $\om \in \DD$.

\begin{lemma}\label{le:Htildeequiv}
 Let $\om \in \DD$, $1<p<\infty$ and $X_p, Y_p\in \{H(\infty,p), HL(p), D^p_{p-1}, H^p\}$. Then, 
$$  \nm{\widetilde{H_{\omega}} (f)}_{X_p}\asymp \nm{\widetilde{H_{\omega}} (f)}_{Y_p},\, \quad f \in  L^1_{\om,[0,1)}.$$ 
\end{lemma}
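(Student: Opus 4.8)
The plan is to exploit the fact that $g:=\widetilde{H_\omega}(f)$ has nonnegative and, in a precise sense, monotone Taylor coefficients. First I would compute from \eqref{eq:B} that $g(z)=\sum_{n\ge0}\widehat{g}(n)z^n$ with $\widehat{g}(n)=\frac{c_n}{2(n+1)\om_{2n+1}}$, where $c_n:=\int_0^1|f(t)|t^n\om(t)\,dt$. Two structural facts drive everything: (i) $\widehat{g}(n)\ge0$, so that $M_\infty(r,g)=g(r)$, the function $r\mapsto g(r)$ is nondecreasing on $[0,1)$, and hence $\|g\|_{H(\infty,p)}^p=\int_0^1 g(r)^p\,dr$; and (ii) $c_n$ is nonincreasing in $n$ (because $t^n$ is). In addition $\om\in\DD$ gives, via Lemma~\ref{caract. pesos doblantes}(v) iterated, that $\om_{2n+1}\asymp\om_{2^{j+1}}$ whenever $n\in I_j:=[2^j,2^{j+1})\cap\N$, i.e. the odd moments are dyadically constant.

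The strategy is to show that each of the four norms of $g$ is comparable to $\Sigma^{1/p}$, where $\Sigma:=\sum_{j\ge0}2^{-j}\big(c_{2^j}/\om_{2^{j+1}}\big)^p$. By Lemma~\ref{HLP C LP} we already have $\|g\|_{H(\infty,p)}\lesssim\|g\|_{X_p}$ for $X_p\in\{H^p,D^p_{p-1},HL(p)\}$, while the inclusions \eqref{eq:HpDpp<2}--\eqref{eq:HpDpp>2} show that, among these three, the largest norm is $\|g\|_{HL(p)}$ when $p\ge2$ and $\|g\|_{D^p_{p-1}}$ when $1<p\le2$. Hence it suffices to prove the two bounds $\|g\|_{H(\infty,p)}^p\gtrsim\Sigma$ and (largest norm)$^p\lesssim\Sigma$; the remaining comparabilities then follow by squeezing, using the inclusions.

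For the lower bound I would estimate $g$ on the real axis: for $n\in[2^{j-1},2^j]$ one has $(1-2^{-j})^n\gtrsim1$, $c_n\ge c_{2^j}$ and $\om_{2n+1}\asymp\om_{2^{j+1}}$, so summing over these $\asymp 2^j$ indices gives $g(1-2^{-j})\gtrsim c_{2^j}/\om_{2^{j+1}}$; integrating $g(r)^p$ over $[1-2^{-j},1-2^{-j-1}]$ and using that $g$ is nondecreasing yields $\|g\|_{H(\infty,p)}^p\gtrsim\Sigma$. When $p\ge2$ the matching upper bound is elementary, since $\|g\|_{HL(p)}^p\asymp\sum_n\frac{c_n^p}{(n+1)^2\om_{2n+1}^p}\asymp\sum_j\frac{1}{2^{2j}\om_{2^{j+1}}^p}\sum_{n\in I_j}c_n^p$ and the monotonicity of $c_n$ gives $\sum_{n\in I_j}c_n^p\le 2^j c_{2^j}^p$, whence $\lesssim\Sigma$. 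This settles the range $p\ge2$.

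The hard part is the upper bound $\|g\|_{D^p_{p-1}}^p\lesssim\Sigma$ for $1<p<2$, where the crude pointwise estimate $M_p(r,g')\le M_\infty(r,g')=g'(r)$ is far too lossy. Here I would invoke a decomposition norm theorem, $\|g\|_{D^p_{p-1}}^p\asymp\sum_j\|\Delta_j g\|_{H^p}^p$ with $\Delta_j g=\sum_{n\in I_j}\widehat{g}(n)z^n$, reducing matters to a sharp $H^p$ estimate of each dyadic block. Writing $\Delta_j g(z)=z^{2^j}Q_j(z)$, the coefficients of $Q_j$ are $\widehat{g}(2^j+m)\asymp c_{2^j+m}/(2^j\om_{2^{j+1}})$, which $\om\in\DD$ forces to be quasi-monotone (nonincreasing up to a constant depending only on $\om$, since $c_n$ decreases while $(n+1)\om_{2n+1}$ varies by a bounded factor across $I_j$). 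The classical Hardy--Littlewood inequality for power series with (quasi-)monotone coefficients then gives $\|\Delta_j g\|_{H^p}^p\lesssim\frac{1}{2^{jp}\om_{2^{j+1}}^p}\sum_{m}c_{2^j+m}^p(m+1)^{p-2}\lesssim\frac{c_{2^j}^p}{2^j\om_{2^{j+1}}^p}$, using again that $c_n$ is nonincreasing and that $\sum_{m\le 2^j}(m+1)^{p-2}\asymp 2^{j(p-1)}$ for $p>1$; summing over $j$ produces $\|g\|_{D^p_{p-1}}^p\lesssim\Sigma$. I expect the two genuinely delicate points to be the justification of the decomposition theorem for $1<p<2$ and the verification that the block coefficients are quasi-monotone, both of which hinge on $\om\in\DD$; the rest is bookkeeping with the monotonicity of $(c_n)$.
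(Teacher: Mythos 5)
Your overall architecture (sandwich all four norms between $\|g\|_{H(\infty,p)}$ and the largest of the other three norms via Lemma~\ref{HLP C LP} and \eqref{eq:HpDpp<2}--\eqref{eq:HpDpp>2}, and compare everything to one dyadic quantity) is sound, and both your lower bound and the case $p\ge2$ are essentially correct, up to one slip: your $\Sigma=\sum_j 2^{-j}\bigl(c_{2^j}/\om_{2^{j+1}}\bigr)^p$ omits the constant term. Since $n=0$ lies in no block $I_j$, the regrouping $\sum_n \frac{c_n^p}{(n+1)^2\om_{2n+1}^p}\asymp\sum_j\frac{1}{2^{2j}\om_{2^{j+1}}^p}\sum_{n\in I_j}c_n^p$ silently discards the term $\asymp c_0^p$, and $c_0^p$ is \emph{not} dominated by $\Sigma$: for $\om\equiv1$ and $f=\chi_{[0,\varepsilon]}$ one gets $\Sigma\asymp\varepsilon^{2p}$ while $\|g\|_{HL(p)}^p\ge\widehat{g}(0)^p\asymp\varepsilon^p$. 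So ``each norm $\asymp\Sigma^{1/p}$'' is false as stated; you must carry $\Sigma+c_0^p$ throughout (all your estimates survive, since trivially $\|g\|_{H(\infty,p)}^p\ge g(0)^p\asymp c_0^p$; note the paper's own displays keep the extra term $\bigl(\int_0^1|f|\om\,dt\bigr)^p$).

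The genuine gap is your block estimate in the range $1<p<2$, which you yourself identify as the hard part. You justify $\|\Delta_jg\|_{H^p}^p\lesssim\frac{1}{2^{jp}\om_{2^{j+1}}^p}\sum_m c_{2^j+m}^p(m+1)^{p-2}$ by saying the block coefficients are quasi-monotone (comparable to a nonincreasing sequence) and invoking a ``classical Hardy--Littlewood inequality for quasi-monotone coefficients''. No such inequality exists in this range: comparability of nonnegative coefficients to a monotone sequence gives no control of $H^p$ norms when $p<2$. Concretely, take $a_m=1+\varepsilon_m/2$ for $0\le m<N$ with signs $\varepsilon_m=\pm1$ chosen (via Khintchine's inequality) so that $\bigl\|\sum_m\varepsilon_mz^m\bigr\|_{H^p}\gtrsim\sqrt N$; then $\{a_m\}$ is quasi-monotone with constant $3$ and $\sum_m a_m^p(m+1)^{p-2}\asymp N^{p-1}$, yet $\bigl\|\sum_m a_mz^m\bigr\|_{H^p}\ge\tfrac12\bigl\|\sum_m\varepsilon_mz^m\bigr\|_{H^p}-\bigl\|\sum_{m<N}z^m\bigr\|_{H^p}\gtrsim\sqrt N\gg N^{1-1/p}$ for $1<p<2$. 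So the step, as justified, fails. It is repairable with a tool already in the paper: the multiplier $\frac{1}{2(k+1)\om_{2k+1}}$ is a product of the nonincreasing sequence $\frac{1}{2(k+1)}$ and the nondecreasing sequence $\frac{1}{\om_{2k+1}}$, so two applications of Lemma~\ref{lema de los lambda} on the block $I_j$ (using $\om\in\DD$ to identify $\om_{2^{j+2}+1}\asymp\om_{2^{j+1}}$) give $\|\Delta_jg\|_{H^p}\lesssim\frac{2^{-j}}{\om_{2^{j+1}}}\bigl\|\sum_{k\in I_j}c_kz^k\bigr\|_{H^p}$, and a third application with the genuinely nonincreasing $\{c_k\}$ together with \eqref{eq:Dn} bounds the last norm by $c_{2^j}2^{j/p'}$; this yields exactly your block estimate. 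Note finally that the paper sidesteps all of this: it proves $\|g\|_{H(\infty,p)}\asymp\|g\|_{HL(p)}$ in a few lines from \cite[Theorem~1]{MatPavPAMS83} (whose statement already encodes your $H(\infty,p)$ computation, constant term included) and then quotes \cite[Lemma~8]{PelRosa21} for the comparison with the $H^p$ and $D^p_{p-1}$ norms, whereas your route, once repaired as above, gives a self-contained argument at the price of the decomposition theorem of \cite{MatPav} and the Riesz-projection Lemma~\ref{lema de los lambda}.
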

\begin{proof}
Here and on the following, let us denote 
$I(n)=\{k\in\N: 2^n\le k<2^{n+1}\}$,\, $n\in \N\cup\{0\}$.  
By Lemma~\ref{caract. pesos doblantes}
\begin{align}\label{eq:omegaIn}
 \om_{2^{n+2}}\asymp \omega_{2k+2}\asymp \omega_{2k}\asymp \om_{2^{n}},\quad\text{ for any  $n\in \N\cup\{0\}$ and $k\in I(n)$}.
 \end{align}
The above equivalences and  \cite[Theorem~1]{MatPavPAMS83}, yield
\begin{equation*}\begin{split}
 \nm{\widetilde{H_{\omega}} (f)}_{H(\infty,p)}^p
& \asymp \sum_{n=0}^\infty 2^{-n}\left( \sum_{k\in I(n)} \frac{\int_0^1 |f(t)|t^k\omega(t)\,dt}{(k+1)\om_{2k+1}} \right)^p+
 \left( \int_0^1 |f(t)|\omega(t)\,dt\right)^p
\\ &  \asymp \sum_{n=0}^\infty 2^{-n}
\left(  \frac{\int_0^1 |f(t)|t^{2^n}\omega(t)\,dt}{\om_{2^{n+1}}} \right)^p+
 \left( \int_0^1 |f(t)|\omega(t)\,dt\right)^p
\\ &  \asymp  \nm{\widetilde{H_{\omega}} (f)}_{HL(p)}^p, \quad f \in L^1_{\om,[0,1)}.
\end{split}\end{equation*}
This, together with \cite[Lemma~8]{PelRosa21}, finishes the proof.
\end{proof}

\section{Hilbert-type operators acting on $X_p$-spaces, $1<p<\infty$.}\label{sec:p>1}

\subsection{Necessity part of Theorem~\ref{bounded Xp}.}

We begin this section with the  construction of appropriate families of  test functions to be used in the proof of Theorem~\ref{bounded Xp}.
To do this, some notation and previous results are needed. 
Let  $g(z)=\sum\limits_{k=0}^{\infty} \widehat{g}(k) z^k \in \H(\D)$, and
denote
$\De_n g(z) =\sum\limits_{k \in I(n)} \widehat{g}(k)z^k$.    
In the particular case $g(z)=\frac{1}{1-z}$, we simply write 
$\De_n(z)=\De_n(g)(z) =\sum\limits_{k \in I(n)}z^k$. We recall that  
\begin{equation}\label{eq:Dn}
 \| \De_n\|_{H^p}\asymp 2^{n(1-1/p)}, \quad n\in \N\cup\{0\},\quad 1<p<\infty,
 \end{equation}
see \cite[Lemma~2.7]{CPPR}.

 For any $n_1, \,n_2 \in \N\cup\set{0}$, $n_1<n_2$, write
$ S_{n_1,n_2} g(z)=\sum\limits_{k=n_1}^{n_2-1} \widehat{g}(k) z^k$.
The next known result can be proved mimicking the proof of \cite[Lemma~3.4]{LaNoPa2012} (see also \cite[Lemma~E]{PelRathg}), that is,
    by summing by parts and using the M. Riesz projection
theorem.
 
\begin{letterlemma}
\label{lema de los lambda}
Let $1<p<\infty$ and $\la=\set{\la_k}_{k=0}^{\infty}$ be a positive and monotone sequence. Let $ g(z)=\sum\limits_{k=0}^{\infty}b_k z^k$ and $(\la g)(z)=\sum\limits_{k=0}^{\infty}\la_k b_k z^k$.
\begin{itemize}
    \item[(a)] If $\set{\la_k}_{k=0}^{\infty}$ is nondecreasing, there exists a constant $C>0$ such that
    $$ C^{-1}\la_{n_1}\nm{S_{n_1,n_2}g}_{H^p}\leq \nm{S_{n_1,n_2}(\la g)}_{H^p}\leq C \la_{n_2}\nm{S_{n_1,n_2}g}_{H^p}.$$
    \item[(b)] If $\set{\la_k}_{k=0}^{\infty}$ is nonincreasing, there exists a constant $C>0$ such that
    $$ C^{-1}\la_{n_2}\nm{S_{n_1,n_2}g}_{H^p}\leq \nm{S_{n_1,n_2}(\la g)}_{H^p}\leq C \la_{n_1}\nm{S_{n_1,n_2}g}_{H^p}.$$
\end{itemize}
\end{letterlemma}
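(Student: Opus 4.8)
The plan is to follow exactly the route indicated in the text: reduce everything to two \textbf{upper} multiplier estimates via Abel summation, and to derive the two \textbf{lower} estimates from these by passing to the reciprocal sequence. The two analytic inputs are, first, the M.~Riesz projection theorem in the guise of the \emph{uniform} boundedness on $H^p$ ($1<p<\infty$) of the block projections $S_{n_1,n_2}$, and second, plain summation by parts. For the former, write $Q_m g(z)=\sum_{k\ge m}b_k z^k=z^m P(z^{-m}g)$, where $P$ is the Riesz projection; since multiplication by $z^{\pm m}$ is an $L^p(\T)$-isometry and $S_{n_1,n_2}=Q_{n_1}-Q_{n_2}$, one gets $\nm{S_{n_1,n_2}g}_{H^p}\le 2\nm{P}_{p}\,\nm{g}_{H^p}$ with a constant independent of $n_1,n_2$ and $g$. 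The key consequence I will record is that for $k+1\le n_2$ the partial sum $T_k:=S_{n_1,k+1}g$ satisfies $T_k=S_{n_1,k+1}\bigl(S_{n_1,n_2}g\bigr)$, because $[n_1,k+1)\subseteq[n_1,n_2)$, and hence $\nm{T_k}_{H^p}\lesssim \nm{S_{n_1,n_2}g}_{H^p}$.

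Next I would carry out the summation by parts. Writing $b_k z^k=T_k-T_{k-1}$ with $T_{n_1-1}=0$, Abel's identity gives the single formula $S_{n_1,n_2}(\la g)=\la_{n_2-1}T_{n_2-1}-\sum_{k=n_1}^{n_2-2}(\la_{k+1}-\la_k)T_k$, valid for any sequence $\la$. If $\la$ is nondecreasing, then $\la_{k+1}-\la_k\ge0$, so the triangle inequality together with $\nm{T_k}_{H^p}\lesssim\nm{S_{n_1,n_2}g}_{H^p}$ bounds the right-hand side by a constant times $\bigl[\la_{n_2-1}+\sum_{k=n_1}^{n_2-2}(\la_{k+1}-\la_k)\bigr]\nm{S_{n_1,n_2}g}_{H^p}$, and the bracket telescopes to $2\la_{n_2-1}-\la_{n_1}\le 2\la_{n_2}$. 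This yields the upper estimate $\nm{S_{n_1,n_2}(\la g)}_{H^p}\le C\la_{n_2}\nm{S_{n_1,n_2}g}_{H^p}$ of part (a). If instead $\la$ is nonincreasing, the same computation with $|\la_{k+1}-\la_k|=\la_k-\la_{k+1}$ makes the bracket telescope to $\la_{n_1}$, giving the upper estimate $\nm{S_{n_1,n_2}(\la g)}_{H^p}\le C\la_{n_1}\nm{S_{n_1,n_2}g}_{H^p}$ of part (b).

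Finally I would obtain the lower estimates by the reciprocal trick: the sequence $\mu_k=1/\la_k$ is positive and monotone in the opposite sense, and on the block $[n_1,n_2)$ one has $S_{n_1,n_2}g=S_{n_1,n_2}\bigl(\mu\cdot(\la g)\bigr)$. Applying the already-proved upper estimate to $\mu$ and $h=\la g$ and rearranging converts each upper bound into the matching lower bound: for nondecreasing $\la$ (so $\mu$ nonincreasing) this gives $\la_{n_1}\nm{S_{n_1,n_2}g}_{H^p}\le C\,\nm{S_{n_1,n_2}(\la g)}_{H^p}$, completing (a), and symmetrically for (b). The only genuinely delicate point, and the one I would state most carefully, is the \emph{uniformity in $n_1,n_2$} of the block-projection bound; this is precisely what M.~Riesz's theorem supplies through the modulation $z^{\pm m}$, and once it is in hand the rest is bookkeeping with the telescoping sums. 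Everything is stable under replacing $H^p$ by its norm, so no extra regularity of $g$ beyond $g\in H^p$ is needed.
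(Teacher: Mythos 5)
Your proof is correct and follows exactly the route the paper indicates for this known result (it cites \cite[Lemma~3.4]{LaNoPa2012} and \cite[Lemma~E]{PelRathg}, i.e.\ summation by parts combined with the M.~Riesz projection theorem): your uniform block-projection bound via modulation $z^{\pm m}$, the Abel-summation telescoping for the upper estimates, and the reciprocal-sequence trick for the lower estimates are precisely the standard argument behind those references. No gaps; the uniformity in $n_1,n_2$ that you flag is indeed the only delicate point, and you handle it correctly.
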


\begin{lemma}
\label{lema f test Mp}  
Let  $\omega\in\DD$, $1<p<\infty$, $\alpha,\beta\in\mathbb{R}$ and $M, N \in \N\cup\{0\}$ such that $0\le N<4N+1\le M$. Let us 
 consider the function 
\begin{equation*}
f_{N,M}(z)=\sum\limits_{k=N}^M \om_{2k}^{\alpha} (k+1)^{\beta}z^k.
\end{equation*}
Then,
\begin{equation}\label{eq:mpr0}
 \|f_{ N, M}\|_{HL(p)}\asymp  \|f_{ N, M}\|_{H^p}\asymp  \|f_{ N, M}\|_{D^p_{p-1}},
 \end{equation}
where the constants involved do not depend on $M$ or $N$.
In particular, if $\alpha=0$ then \eqref{eq:mpr0} holds for any radial weight. 
\end{lemma}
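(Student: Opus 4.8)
The plan is to prove the three-way equivalence by establishing that each of the norms $\|f_{N,M}\|_{HL(p)}$, $\|f_{N,M}\|_{H^p}$, and $\|f_{N,M}\|_{D^p_{p-1}}$ is comparable to a common quantity built from a dyadic decomposition of the coefficient sequence. The natural common reference is the $HL(p)$-norm, since it is defined directly in terms of the coefficients: since the sum ranges over $N\le k\le M$, we have
$$\|f_{N,M}\|^p_{HL(p)}=\sum_{k=N}^M \om_{2k}^{\alpha p}(k+1)^{\beta p}(k+1)^{p-2}.$$
The idea is to group this sum into dyadic blocks $I(n)$ and exploit the regularity of $\om\in\DD$. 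First I would invoke Lemma~\ref{caract. pesos doblantes} (specifically the consequences packaged as \eqref{eq:omegaIn} in the proof of Lemma~\ref{le:Htildeequiv}) to conclude that on each block $I(n)$ the factor $\om_{2k}^{\alpha}$ is comparable to a constant $\om_{2^n}^{\alpha}$, and likewise $(k+1)^{\beta}\asymp 2^{n\beta}$ and $(k+1)^{p-2}\asymp 2^{n(p-2)}$. Thus each block contributes $\asymp \om_{2^n}^{\alpha p}2^{n\beta p}2^{n(p-2)}\cdot\#(I(n)\cap[N,M])$, and the whole $HL(p)$-norm is comparable to a sum of such block contributions.

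Next I would control the $H^p$-norm and the $D^p_{p-1}$-norm by the same dyadic block sum. The key tool is a Littlewood--Paley / decomposition-norm theorem, which gives
$$\|g\|^p_{H^p}\asymp \sum_{n=0}^\infty \|\Delta_n g\|^p_{H^p}$$
(and an analogous statement for $D^p_{p-1}$), valid for $1<p<\infty$; this is precisely where the restriction $1<p<\infty$ is used. Applying this to $f_{N,M}$, it remains to estimate $\|\Delta_n f_{N,M}\|_{H^p}$. On each block the coefficients are $\om_{2k}^{\alpha}(k+1)^{\beta}$, which by the doubling estimates is a comparably-constant multiple of $\om_{2^n}^{\alpha}2^{n\beta}$ times the indicator of the block. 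Since the partial Riesz-type sum of $\frac{1}{1-z}$ over a block has $\|\Delta_n\|_{H^p}\asymp 2^{n(1-1/p)}$ by \eqref{eq:Dn}, and since multiplying by a sequence that is comparable to a constant on the block is harmless by Lemma~\ref{lema de los lambda} (monotone-multiplier lemma, applied to the essentially monotone factor $\om_{2k}^{\alpha}(k+1)^{\beta}$), I expect $\|\Delta_n f_{N,M}\|_{H^p}\asymp \om_{2^n}^{\alpha}2^{n\beta}\cdot 2^{n(1-1/p)}$. Raising to the $p$-th power gives $\om_{2^n}^{\alpha p}2^{n\beta p}2^{n(p-1)}$, which after accounting for the block length $\#I(n)\asymp 2^n$ matches the $HL(p)$ block contribution $\om_{2^n}^{\alpha p}2^{n\beta p}2^{n(p-2)}\cdot 2^n$. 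Summing over the relevant blocks yields $\|f_{N,M}\|_{H^p}\asymp\|f_{N,M}\|_{HL(p)}$, and the same computation for $D^p_{p-1}$ closes that comparison.

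The main obstacle I anticipate is the careful bookkeeping at the two endpoints: the blocks $I(n)$ containing $N$ and $M$ are only partially filled, and one must check that the constants in all comparisons are genuinely independent of $M$ and $N$. The hypothesis $0\le N<4N+1\le M$ is presumably there to guarantee that the range $[N,M]$ spans at least one full dyadic block (so that boundary blocks cannot dominate and the block-sum estimate is two-sided with uniform constants); I would verify that this condition lets me discard or absorb the two incomplete end-blocks into the comparable full-block sum. A secondary point is justifying the application of Lemma~\ref{lema de los lambda}: the multiplier $\lambda_k=\om_{2k}^{\alpha}(k+1)^{\beta}$ need not be globally monotone, but since $\om_{2k}$ is nonincreasing in $k$, each factor $\om_{2k}^{\alpha}$ and $(k+1)^{\beta}$ is monotone (with direction depending on the signs of $\alpha,\beta$), so one applies part (a) or (b) of the lemma factor-by-factor on each block, where the ratio $\lambda_{n_2}/\lambda_{n_1}$ over a single dyadic block is bounded by the doubling constants. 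Finally, for the last sentence of the statement, when $\alpha=0$ the factor $\om_{2k}^{\alpha}\equiv 1$ disappears, so the doubling hypothesis $\om\in\DD$ is never invoked and the argument goes through for an arbitrary radial weight.
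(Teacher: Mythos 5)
Your plan has the right ingredients (dyadic blocks, doubling of the moments on each block via \eqref{eq:omegaIn}, Lemma~\ref{lema de los lambda} applied factor-by-factor, \eqref{eq:Dn}, and the correct reading of the hypothesis $4N+1\le M$ as a guarantee that the incomplete end-blocks can be absorbed), but it rests on a step that is simply false: the claimed decomposition norm theorem $\|g\|_{H^p}^p\asymp\sum_{n}\|\Delta_n g\|_{H^p}^p$ does \emph{not} hold for $1<p<\infty$ unless $p=2$. What Littlewood--Paley theory gives for $H^p$ is the square-function equivalence $\|g\|_{H^p}\asymp\bigl\|\bigl(\sum_n|\Delta_n g|^2\bigr)^{1/2}\bigr\|_{L^p}$, which is not an $\ell^p$-sum of block norms. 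To see that your version fails in both directions, test it on lacunary series $g(z)=\sum_n a_n z^{2^n}$: each dyadic block contains a single term, so $\sum_n\|\Delta_n g\|_{H^p}^p=\sum_n|a_n|^p$, while $\|g\|_{H^p}\asymp\bigl(\sum_n|a_n|^2\bigr)^{1/2}$; for $p>2$ choose $\{a_n\}$ with $\sum|a_n|^p<\infty$ but $\sum|a_n|^2=\infty$, and for $p<2$ reverse the roles. So the step in which you estimate $\|f_{N,M}\|_{H^p}$ directly by the block sum is unjustified (the conclusion of the lemma for these particular $f_{N,M}$ is of course true, but it cannot be derived from a false general principle). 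The $\ell^p$-type block decomposition you want is legitimate only for the mixed-norm space $D^p_{p-1}=H^1\left(p,p,p-1\right)$, by \cite[Theorem~2.1(b)]{MatPav}.

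The repair is exactly the route the paper takes, and it requires never estimating $\|f_{N,M}\|_{H^p}$ directly. One proves the two-sided comparison $\|f_{N,M}\|_{D^p_{p-1}}\asymp\|f_{N,M}\|_{HL(p)}$ using the valid decomposition theorem for $D^p_{p-1}$ together with Lemma~\ref{lema de los lambda}, \eqref{eq:omegaIn} and \eqref{eq:Dn} (first for dyadic endpoints $2^N+1,2^M$, then for general $N,M$ by splitting off the two end-blocks with the Riesz projection and absorbing them, which is where $4N+1\le M$ enters, ensuring $N^\star+1<M^\star$ for the associated dyadic scales). Then the $H^p$ statement comes for free from the inclusions \eqref{eq:HpDpp<2} and \eqref{eq:HpDpp>2}: for $p\le2$ one has $D^p_{p-1}\subset H^p\subset HL(p)$ and for $p\ge2$ one has $HL(p)\subset H^p\subset D^p_{p-1}$, so in either case the $H^p$ norm of $f_{N,M}$ is sandwiched between two quantities already shown to be comparable. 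With that replacement, the rest of your argument, including the observation that $\alpha=0$ removes any use of $\omega\in\DD$, matches the paper's proof.
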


\begin{proof}
Firstly,  let us show that for all $N, M \in \N$, $M>N$, 
\begin{equation}\label{eq:mpr1}
\|f_{ 2^N +1, 2^M }\|_{D^p_{p-1}}\asymp \|f_{2^N+1, 2^M}\|_{HL(p)}.
\end{equation}

\cite[Theorem~2.1(b)]{MatPav} (see also \cite[7.5.8]{Pabook}), Lemma \ref{lema de los lambda}, \eqref{eq:omegaIn}
 and \eqref{eq:Dn} implies
\begin{align*}
\|f_{2^N+1, 2^M}\|_{D^p_{p-1}}^p &\asymp\sum\limits_{n=N}^{M-1} 2^{-np} \left\|\sum\limits_{k\in I(n)} \om_{2k+2}^{\alpha} (k+2)^{\beta} (k+1) z^k\right\|_{H^p}^p 
\\ & \asymp \sum\limits_{n=N}^{M-1} 2^{np\beta} \om_{2^{n+1}}^{p\alpha}\|\Delta_n\|^p_{H^p}
\\ & \asymp \sum\limits_{n=N}^{M-1} 2^{n(p\beta+p-1)} \om_{2^{n+1}}^{p\alpha}
\\& \asymp \sum\limits_{k=2^N+1}^{2^M}(k+1)^{p\beta+p-2}\om_{2k}^{p\alpha}  = \|f_{2^N+1,2^M}\|_{HL(p)}^p,
\end{align*}
A similar calculation shows that 
\begin{equation}\label{eq:mpr2}
\|f_{2^{N+1}+1, 2^M}\|_{D^p_{p-1}} \asymp \|f_{2^{N}+1, 2^{M+1}}\|_{D^p_{p-1}},\quad M>N+1.
\end{equation}
Next, if $N> 2$, there is 
 $N^\star, M^\star\in \N$ such that $2^{N^\star}\leq N-1<2^{N^\star+1}$ and $2^{M^\star}\leq M-1<2^{M^\star+1}$,
 so $N^\star +1<M^\star$. 
  Then, by 
  \cite[Theorem~2.1(b)]{MatPav} and the boundedness of the Riesz projection,
  \eqref{eq:mpr1} and \eqref{eq:mpr2}
\begin{align*}
\|f_{N, M}\|_{D^p_{p-1}}^p 
 &
\asymp  2^{-pN^\star} \left\|\sum\limits_{k=N-1}^{2^{N^\star+1}-1} (k+1)\fg_{N,M}(k+1) z^k\right\|_{H^p}^p +
 \| f_{2^{{N^\star}+1}+1, 2^{M^\star}}\|^p_{D^p_{p-1}}
\\ &+ 
 2^{-pM^\star} \left\|\sum\limits_{k=2^{M^\star}}^{M-1} (k+1)\fg_{N,M}(k+1) z^k\right\|_{H^p}^p 
 \\
&\lesssim \|f_{2^{N^\star}+1, 2^{M^\star+1}}\|_{D^p_{p-1}}^p \asymp  \|f_{2^{N^\star+1}+1, 2^{M^\star}}\|_{D^p_{p-1}}^p  \asymp \|f_{2^{N^\star+1}+1, 2^{M^\star} }\|_{HL(p)}^p\lesssim \|f_{N,M}\|_{HL(p)}^p.
\end{align*}
On the other hand, 
$$\|f_{N, M}\|_{D^p_{p-1}}^p \gtrsim \|f_{2^{N^\star+1}+1, 2^{M^\star}}\|_{D^p_{p-1}}^p \asymp \|f_{2^{N^\star}+1, 2^{M^\star+1}}\|_{D^p_{p-1}}^p
 \asymp \|f_{2^{N^\star}+1, 2^{M^\star+1}}\|_{HL(p)}^p \geq \|f_{N,M}\|_{HL(p)}^p.$$
Then, bearing in mind \eqref{eq:HpDpp<2} and \eqref{eq:HpDpp>2}, we obtain $\|f_{N, M}\|_{HL(p)}\asymp  \|f_{N, M}\|_{H^p}\asymp  \|f_{ N, M}\|_{D^p_{p-1}}$ for each $N> 2$.

If $N\in\{0,1,2\}$, the previous argument together with minor modifications
implies \eqref{eq:mpr0}. This finishes the proof.
\end{proof}

Now we are ready to prove the necessity part of Theorem~\ref{bounded Xp}.
\begin{proposition}
\label{cond nec Xp}
Let $\om$ be a radial weight and $1<p<\infty$. If $X_p, Y_p\in\{H(\infty,p),H^p, D^p_{p-1}, HL(p)\}$, $T\in \{\Ho, \widetilde{\Ho}\}$, and  
$T : X_p\to Y_p$ is a bounded operator. Then, $\omega\in\DDD$ and 
\begin{equation}
\label{Mp}
M_p(\om)=\sup\limits_{N\in \N} \left(\sum\limits_{n=0}^N \frac{1}{(n+1)^2 \om_{2n+1}^p}\right)^{\frac{1}{p}}\left(\sum\limits_{n=N}^{\infty} \om_{2n+1}^{p'}(n
+1)^{p'-2}\right)^{\frac{1}{p'}}<\infty.
\end{equation}

\end{proposition}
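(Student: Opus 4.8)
The plan is to work throughout with the explicit Taylor expansion of the kernel. Writing $B^\om_t(\z)=\sum_{n\ge0}\frac{(t\z)^n}{2\om_{2n+1}}$ from \eqref{eq:B} and integrating in $z$, one obtains for every admissible $f$ the coefficient formula
$$\widehat{T(f)}(n)=\frac{1}{2(n+1)\om_{2n+1}}\int_0^1 g(t)\,t^n\,\om(t)\,dt,\qquad g\in\{f,|f|\},$$
with $g=f$ for $T=\Ho$ and $g=|f|$ for $T=\widetilde{\Ho}$. In particular $T(f)$ has nonnegative Taylor coefficients whenever $f$ does, and on such $f$ the two operators coincide. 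First I would collapse the ambiguity in $(X_p,Y_p,T)$ to a single inequality: Lemma~\ref{HLP C LP} gives $\|T(f)\|_{H(\infty,p)}\lesssim\|T(f)\|_{Y_p}$ for every admissible $Y_p$, while on the test polynomials $f_{N,M}$ of Lemma~\ref{lema f test Mp} (and on normalised single dyadic blocks, whose four norms are likewise comparable via \eqref{eq:Dn}, Lemma~\ref{lema de los lambda} and \eqref{eq:HpDpp<2}--\eqref{eq:HpDpp>2}) one has $\|f_{N,M}\|_{X_p}\asymp\|f_{N,M}\|_{H^p}$. Hence the hypothesis yields, uniformly over this test family, the master estimate $\|T(f)\|_{H(\infty,p)}\lesssim\|f\|_{H^p}$, from which the three conclusions $\om\in\DD$, $M_p(\om)<\infty$ and $\om\in\M$ must be extracted; since $\DDD=\DD\cap\M$, together they give $\om\in\DDD$.

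\textbf{Step 1 ($\om\in\DD$).} I would prove this through the moment characterisation $\DD(\om)=\sup_n\om_n/\om_{2n}<\infty$ of Lemma~\ref{caract. pesos doblantes}(v). The idea is to feed the master estimate a dyadic block concentrated at one scale and to read the output coefficients of $T(f)$ at a \emph{second}, carefully chosen dyadic scale, computing $\|T(f)\|_{H(\infty,p)}$ through its coefficient blocks by Mateljevic--Pavlovic \cite{MatPavPAMS83}. The point of the scale mismatch is to make the moment in the numerator $\int_0^1 f(t)t^n\om(t)\,dt$ and the moment $\om_{2n+1}$ in the denominator live at different scales, so that the doubling quotient $\om_n/\om_{2n}$ appears in the output rather than cancelling. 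I expect this to be the main obstacle of the whole proposition: the kernel is strongly smoothing (the factor $(n+1)^{-1}$), so a naive test only yields $\om_n/\om_{2n}\lesssim n^{c}$, and defeating this damping requires a delicate matching of input and output scales together with a careful control of the variation of $x\mapsto\om_x$ across a dyadic block — precisely the control that $\DD$ would provide but which is not yet available here.

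\textbf{Step 2 ($M_p(\om)<\infty$).} With $\om\in\DD$ secured, Lemma~\ref{lema f test Mp} applies for arbitrary $\alpha,\beta$, so the full family $f_{N,M}(z)=\sum_{k=N}^M\om_{2k}^{\alpha}(k+1)^{\beta}z^k$ is available with its four norms comparable, and by \eqref{eq:omegaIn} the moments $\om_{k+n}$ are essentially constant on dyadic blocks, which linearises $\int_0^1 f_{N,M}(t)t^n\om(t)\,dt$. The condition \eqref{Mp} is exactly the discrete Muckenhoupt (weighted Hardy) condition governing the boundedness of the Hardy-type operator
$$(a_k)\mapsto\left(\frac{1}{(n+1)\om_{2n+1}}\sum_{k\le n}\om_k\,a_k\right)_{n\ge0},$$
so to obtain its necessity I would, for each fixed $N$, tune $\alpha$ and $\beta$ so that $f_{N,M}$ is the Hölder extremiser of the pairing defining \eqref{Mp} — whence $\|f_{N,M}\|_{H^p}$ matches the first factor while $\|T(f_{N,M})\|_{H(\infty,p)}$ dominates the full product — and then let $M\to\infty$; the master estimate delivers the uniform bound $M_p(\om)\lesssim\|T\|$.

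\textbf{Step 3 ($\om\in\M$, conclusion).} Finally I would show that any radial weight with $M_p(\om)<\infty$ belongs to $\M$, independently of the operator. Comparing in \eqref{Mp} the value at $N$ with that at a comparably larger index, and using $\sum_{n\ge N}\om_{2n+1}^{p'}(n+1)^{p'-2}\gtrsim\om_{2N+1}^{p'}(N+1)^{p'-1}$ against the complementary factor, forces $\om_{2n+1}$ to decay at least geometrically when $n$ is multiplied by a fixed constant, which is the defining inequality $\om_x\ge C\om_{Kx}$ of $\M$. Combining $\om\in\DD$ from Step~1 with $\om\in\M$ and the identity $\DDD=\DD\cap\M$ then yields $\om\in\DDD$ and completes the proof; the decisive and most delicate point remains the verification of $\om\in\DD$ in Step~1.
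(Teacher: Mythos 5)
Your overall architecture coincides with the paper's: reduce the output norm to $H(\infty,p)$ via Lemma~\ref{HLP C LP}, test with polynomials whose norms in all four spaces are comparable, and split the conclusion into $\om\in\DD$, $M_p(\om)<\infty$, and $M_p(\om)<\infty\Rightarrow\om\in\M$, finishing with $\DDD=\DD\cap\M$. Your Steps 2 and 3 are in substance the paper's second and third steps: the paper tests with exactly the H\"older extremisers $f_{N,M}(z)=\sum_{k=N}^{M}\om_{2k+1}^{p'-1}(k+1)^{p'-2}z^k$ and lets $M\to\infty$ (your Step 2; note only that $\|f_{N,M}\|_{HL(p)}^p$ equals the \emph{second} factor $\sum_{k=N}^M\om_{2k+1}^{p'}(k+1)^{p'-2}$ of \eqref{Mp}, not the first), and it deduces $\M$ from \eqref{Mp} by comparing the block $j\in[N,KN]$ of the first factor against the block $j\in[KN,(K+M)N)$ of the second, using only that $x\mapsto\om_x$ is non-increasing (your Step 3; your displayed bound with $\om_{2N+1}$ on the right needs doubling as stated, but replacing it by a moment of larger index, which monotonicity gives for free, yields the same conclusion).

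The genuine gap is Step 1, which you announce as ``the main obstacle'' but do not carry out; moreover the obstacle you fear --- that one needs control of the variation of $x\mapsto\om_x$ across a block, ``precisely the control that $\DD$ would provide'' --- is a misdiagnosis, so the step cannot be waved through as routine nor dismissed as circular. No doubling-type control is needed: monotonicity of the moments suffices. The paper takes $f_N(z)=\sum_{k=0}^{N}(k+1)^{\frac{1}{p-1}-1}z^k$, whose norms are comparable for \emph{any} radial weight by the $\alpha=0$ case of Lemma~\ref{lema f test Mp} together with Lemma~\ref{HLP C LP}, so that $\|f_N\|_{X_p}^p\lesssim\|f_N\|_{HL(p)}^p\asymp(N+1)^{\frac{1}{p-1}}$, and reads the output on the window $n\in[6N,7N]$. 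There $n+k\le 8N$ and $2n+1\ge 12N$, so by monotonicity alone $\om_{n+k}/\om_{2n+1}\ge\om_{8N}/\om_{12N}$, and since $\Ho(f_N)$ has non-negative coefficients,
\begin{equation*}
M_\infty\bigl(r,\Ho(f_N)\bigr)\ge\sum_{n=6N}^{7N}\frac{r^n}{2(n+1)\om_{2n+1}}\sum_{k=0}^{N}\widehat{f_N}(k)\,\om_{n+k}
\gtrsim(N+1)^{\frac{1}{p-1}}\,\frac{\om_{8N}}{\om_{12N}}\,r^{7N},
\end{equation*}
where the harmonic sum $\sum_{n=6N}^{7N}(n+1)^{-1}\asymp 1$ over the $\asymp N$ output indices is exactly what absorbs the smoothing factor $(n+1)^{-1}$ that defeats the naive one-coefficient test. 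Integrating in $r$ gives $\|\Ho(f_N)\|_{H(\infty,p)}^p\gtrsim(N+1)^{\frac{1}{p-1}}\bigl(\om_{8N}/\om_{12N}\bigr)^p$, so boundedness forces $\om_{8N}\le C\om_{12N}$ uniformly in $N$, and a short iteration of this inequality upgrades it to $\om_x\le C^2\om_{2x}$ for all large $x$, i.e.\ $\om\in\DD$ by Lemma~\ref{caract. pesos doblantes}(v). So your scale-mismatch idea is indeed the right one, but as written the decisive step is neither executed nor correctly diagnosed, and without it the proposal is not a proof.
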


\begin{proof}
In order to obtain both conditions, $\omega\in\DDD$ and $M_p(\om)<\infty $, we are going to work with
families of test functions constructed in Lemma~\ref{lema f test Mp}. Since they have  
non-negative  Maclaurin coefficients, it is enough to prove the result for $T= H_{\om }$.
Take $f \in \H(\D)$ such that $\fg(n) \geq 0$ for all $n \in \N$.
\par {\bf {First Step.} } 
We will prove that $\omega\in\DD$. By Lemma~\ref{HLP C LP}, it is enough to deal with the case $Y_p=H(\infty,p)$.

Observe that
$M_\infty(r,H_\omega(f))=\sum_{n=0}^\infty\frac{1}{2(n+1)\omega_{2n+1}}\left(\sum_{k=0}^\infty \widehat{f}(k)\omega_{n+k}\right)r^n$. 
Now, consider the test functions $f_N(z)=\sum\limits_{n=0}^N \frac{1}{(n+1)^{1-\frac{1}{p-1}}} z^n, N\in \N$. Given that
$\sum\limits_{k=0}^N \frac{1}{(k+1)^{1-\frac{1}{p-1}}}\asymp (N+1)^{\frac{1}{p-1}}$, 
\begin{equation*}\begin{split}
M_\infty(r,H_\omega(f_N))&\ge\sum_{n=6N}^{7N}\frac{1}{2(n+1)\omega_{2n+1}}\left(\sum_{k=0}^N \widehat{f_N}(k)\omega_{n+k}\right)r^n
\\ & \gtrsim \sum_{n=6N}^{7N}\frac{\omega_{n+N}}{(n+1)\omega_{2n+1}}\left(\sum\limits_{k=0}^N \frac{1}{(k+1)^{1-\frac{1}{p-1}}}\right)r^n
\\ & \gtrsim (N+1)^{\frac{1}{p-1}} \frac{\omega_{8N}}{\omega_{12N}}r^{7N}
, \quad  N\in \N,\quad 0\le r<1.
\end{split}\end{equation*}
So, $$\| H_\omega(f_N)\|^p_{H(\infty,p)}\gtrsim (N+1)^{\frac{1}{p-1}}\left( \frac{\omega_{8N}}{\omega_{12N}}\right)^p,\quad  N\in \N.$$
By Lemma~\ref{lema f test Mp} and Lemma~\ref{HLP C LP},
$$\| f_N\|^p_{X_p}\lesssim\| f_N\|^p_{HL(p)}= \sum\limits_{n=0}^N \frac{1}{(n+1)^{1-\frac{1}{p-1}}}\asymp (N+1)^{\frac{1}{p-1}}.$$
Consequently,
\begin{equation*}\begin{split}
(N+1)^{\frac{1}{p-1}}\left( \frac{\omega_{8N}}{\omega_{12N}}\right)^p & \lesssim 
\| H_\omega(f_N)\|^p_{H(\infty,p)} \lesssim  \|f_N\|^p_{X_p}\lesssim
(N+1)^{\frac{1}{p-1}},\quad  N\in \N.
\end{split}\end{equation*}

Therefore, there is $C=C(\omega,p)$ such that 
$\om_{8N}\leq C \om_{12N},\quad N \in \N$. 
From now on, for each $x\in\mathbb{R}$, $E[x]$ denotes the biggest integer\,$\le x$.
For any $x\ge 120$,   take $N\in \N$ such that $8N\le x<8N+8$, and then
$$\om_x\le \om_{8N}\leq C \om_{12N}\le C \om_{8E\left[\frac{3N}{2} \right]}
\le  C^2   \om_{12E\left[\frac{3N}{2} \right]}
\le C^2 \om_{18N-12}\le C^2 \om_{16N+16}\le C^2 \om_{2x}
.$$
So, $\omega\in \DD$ by Lemma~\ref{caract. pesos doblantes}.

\par {\bf {Second Step.} } 
We will prove that $M_p(\om) < \infty$.

{\bf{Case $\mathbf{Y_p=HL (p)}$}}.   Set an arbitrary $N \in \N$. Then, bearing in mind  that $\{\omega_k\}_{k=0}^\infty$ is decreasing,
\begin{equation}\begin{split}\label{eq:testing}
\left( \sum\limits_{n=0}^{N}\frac{1}{(n+1)^2\om_{2n+1}^p}\right) \left(\sum\limits_{k=N}^{\infty}\fg(k)\om_{2k+1}\right)^p 
&
\le
\sum\limits_{n=0}^{\infty}\frac{1}{(n+1)^2\om_{2n+1}^p} \left(\sum\limits_{k=0}^{\infty}\fg(k)\om_{n+k}\right)^p
\\ & \lesssim 
\|H_{\om}\|_{X_p\to HL(p) }^p \|f\|^p_{X_p}.
\end{split}\end{equation}

Take $M, N \in \N$, $M>4N+1$, and consider the family of test polynomials 
\begin{equation}\label{eq:testw}
f_{N,M}(z)=\sum\limits_{k=N}^{M}\om_{2k+1}^{p'-1}(k+1)^{p'-2}z^k,\, z \in \D.
\end{equation} 
Then, Lemmas~\ref{HLP C LP} and \ref{lema f test Mp} yield $$\sum\limits_{k=N}^{M}\om_{2k+1}^{p'}(k+1)^{p'-2}= \|f_{ N, M}\|^p_{HL(p)}\gtrsim  \|f_{N,M}\|^p_{X_p}$$
where the constants do not depend on $M$ or $N$.

So, testing this family of functions in \eqref{eq:testing}, there exists $C=C(p, \om)>0$ such that
\begin{equation*}
\left(\sum\limits_{n=0}^{N}\frac{1}{(n+1)^2\om_{2n+1}^p}\right) \left(\sum\limits_{k=N}^{M}\om_{2k+1}^{p'}(k+1)^{p'-2}\right)^{p-1} \leq C,
\quad \text{for any $M, N \in \N$, $M>4N+1$}.
\end{equation*} 
By letting $M\to\infty$, and taking the supremum in $N\in\N$, \eqref{Mp} holds.

{\bf{Case $\mathbf{Y_p\in \{H(\infty,p),H^p,D^p_{p-1}\}}$}}.
Let  $f_{ N,M}$ be the functions defined in \eqref{eq:testw}, then $\Ho (f_{N,M})=\widetilde{\Ho}(f_{N,M})$.  
This together with the fact that $\om \in \DD$ and Lemma~\ref{le:Htildeequiv}, yields
 $$ \|\Ho (f_{N,M})\|_{Y_p}\asymp \|\Ho (f_{N,M})\|_{HL(p)} $$
  where the costants in the inequalities do not depend on $M$ or $N$.

Therefore, using  Lemma~\ref{HLP C LP}, Lemma~\ref{le:Htildeequiv} and Lemma \ref{lema f test Mp},
there exists $C=C(p, \om)>0$ such that
$$\|\Ho (f_{N,M})\|_{HL(p)} \leq C \|f_{ N,M}\|_{HL(p)}.$$
So, arguing as in the  case $Y_p=HL(p)$, we obtain $M_p(\om)<\infty.$

\par {\bf {Third Step.} } 
We will prove that the condition $M_p(\om)<\infty$ implies that  $\om \in \M$.
Indeed, set $K,M \in \N$, $K,M >1$ and $N \in \N$. By \eqref{Mp},
\begin{align*}
\infty &> M_p(\om) \geq \left(\sum\limits_{j=N}^{KN} \frac{1}{(j+1)^2 \om_{2j+1}^p}\right)^{\frac{1}{p}}\left(\sum\limits_{j=KN}^{(K+M)N-1} \om_{2j+1}^{p'}(j+1)^{p'-2}\right)^{\frac{1}{p'}}
\\&\geq \frac{\om_{2(K+M)N}}{\om_{2N}} \left(\sum\limits_{j=N}^{KN} \frac{1}{(j+1)^2 }\right)^{\frac{1}{p}}\left(\sum\limits_{j=KN}^{(K+M)N-1}(j+1)^{p'-2}\right)^{\frac{1}{p'}},
\end{align*} 
 So, there is $C=C(p)>0$ such that
 
\begin{equation}
\label{c:naturales} \om_{2N}\geq \om_{2(K+M)N}\frac{1}{M_p(\om)}C\left((K+M)^{p'-1}-K^{p'-1}\right)^{\frac{1}{p'}},
\text{ for all } N\in \N.
\end{equation}
Now, fix $K>1$ and take  $M\in \N$ large enough  such that 
$$\frac{1}{M_p(\om)}C\left((K+M)^{p'-1}-K^{p'-1}\right)^{\frac{1}{p'}}=C(K,M,p, \om)>1.$$
Let $x\geq 1$ and take $N \in \N$ such that $2N-2\leq x <2N$. Then,
 by \eqref{c:naturales}
$$ \om_x \geq \om_{2N}\geq C(K,M,p, \om)
\om_{2(K+M)N}\geq C(K,M,p, \om) \om_{(K+M)x+2(K+M)}\ge
C(K,M,p, \om) \om_{3(K+M)x},$$
so  $\om \in \M$.
 Since $\om \in \DD$,  \cite[Theorem~3]{PR19} yields $\om \in \DDD$. The proof is finished.

\end{proof}

\subsection{Sufficiency part of Theorem~\ref{bounded Xp}.}

For the purpose of proving Theorem~\ref{bounded Xp} we need some additional preparations.
In particular,  we aim for reformulating 
the necessary discrete condition on the moments of the radial weight $\omega$, 
$M_p(\omega)<\infty$,
as a continuous inequality in terms of $\omg(r)$. Observe that a radial weight $\omega$  satisfies the condition 
$$K_{p,c}(\om)= \sup\limits_{0<r<1} \left(1+\int_0^r \frac{1}{\omg(t)^p} dt\right)^{\frac{1}{p}}
\left(\int_r^1 \left(\frac{\omg(t)}{1-t}\right)^{p'}\,dt\right)^{\frac{1}{p'}}<\infty$$ if and only if $M_{p,c}(\om)<\infty.$
This fact will be used repeatedly throughout the paper.

\begin{lemma}\label{reformulacionMp}
Let $1<p<\infty$ and $\om \in \DD$. Set
$$K_{p,c}(\om)= \sup\limits_{0<r<1} \left(1+\int_0^r \frac{1}{\omg(t)^p} dt\right)^{\frac{1}{p}}
\left(\int_r^1 \left(\frac{\omg(t)}{1-t}\right)^{p'}\,dt\right)^{\frac{1}{p'}}.$$
Then,
$$\int_0^1 \left(\frac{\omg(t)}{1-t}\right)^{p'}\,dt  \asymp \sum\limits_{k=0}^{\infty} \om_{2k+1}^{p'}(k+1)^{p'-2}$$
and
$$M_p(\om)\asymp K_{p,c}(\omega)
.$$
\end{lemma}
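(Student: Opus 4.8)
The plan is to pass through a dyadic discretization of both the integrals and the sums, exploiting twice the hypothesis $\om\in\DD$: once to see that the decreasing function $\omg$ is essentially constant across a single dyadic step, and once to see that consecutive dyadic blocks are comparable. The bridge between the continuous and discrete worlds is the moment comparison $\om_x\asymp\omg(1-1/x)$ from Lemma~\ref{caract. pesos doblantes}(iii).

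First I would record the basic block estimates. Writing $I(n)=\{k:2^n\le k<2^{n+1}\}$ and using the defining inequality of $\DD$ at $r=1-2^{-n}$ (so that $\frac{1+r}{2}=1-2^{-(n+1)}$) together with the monotonicity of $\omg$, one gets $\omg(t)\asymp\omg(1-2^{-n})$ for $t\in[1-2^{-n},1-2^{-(n+1)})$. On the discrete side, Lemma~\ref{caract. pesos doblantes}(iii) gives $\om_{2^n}\asymp\omg(1-2^{-n})$, while \eqref{eq:omegaIn} and the monotonicity of $x\mapsto\om_x$ yield $\om_{2k+1}\asymp\om_{2^n}$ for every $k\in I(n)$. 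Computing the elementary integrals and sums of powers of $(1-t)$ and of $(k+1)$ over one block then gives
$$\int_{1-2^{-n}}^{1-2^{-(n+1)}}\left(\frac{\omg(t)}{1-t}\right)^{p'}dt\asymp 2^{n(p'-1)}\omg(1-2^{-n})^{p'}\asymp\sum_{k\in I(n)}\om_{2k+1}^{p'}(k+1)^{p'-2}$$
and likewise
$$\int_{1-2^{-n}}^{1-2^{-(n+1)}}\frac{dt}{\omg(t)^p}\asymp 2^{-n}\omg(1-2^{-n})^{-p}\asymp\sum_{k\in I(n)}\frac{1}{(k+1)^2\om_{2k+1}^p}.$$
Summing the first relation over all $n\ge0$ proves at once the first displayed equivalence of the lemma.

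Next, summing the block estimates over partial ranges matches the two factors at the dyadic net $r=1-2^{-J}$: with $N=2^J-1$,
$$1+\int_0^{1-2^{-J}}\frac{dt}{\omg(t)^p}\asymp\sum_{n=0}^{N}\frac{1}{(n+1)^2\om_{2n+1}^p},\qquad\int_{1-2^{-J}}^1\left(\frac{\omg(t)}{1-t}\right)^{p'}dt\asymp\sum_{n=N}^{\infty}\om_{2n+1}^{p'}(n+1)^{p'-2},$$
where the additive $1$ on the left is absorbed by the $n=0$ term $\om_1^{-p}\asymp1$ on the right. Thus the continuous factors $F(r)=(1+\int_0^r\omg(t)^{-p}\,dt)^{1/p}$ and $G(r)=(\int_r^1(\omg(t)/(1-t))^{p'}\,dt)^{1/p'}$ evaluated at the net points are comparable to the discrete factors $A_N,B_N$ defining $M_p(\om)$ at $N=2^J-1$.

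The main obstacle, and the only step beyond bookkeeping, is upgrading this net-level identity to a comparison of the two \emph{suprema}, since $M_p(\om)$ is a supremum over integers and $K_{p,c}(\om)$ over the continuum. Here I would use that $F$ and $A_N$ are nondecreasing while $G$ and $B_N$ are nonincreasing, so any product $F(r)G(r)$ with $r\in[1-2^{-J},1-2^{-(J+1)})$ is sandwiched between products of net values at $J$ and $J+1$; it then suffices to check that consecutive net values are comparable, namely $F(1-2^{-(J+1)})\asymp F(1-2^{-J})$, $G(1-2^{-(J+1)})\asymp G(1-2^{-J})$, and similarly $A_{2^{J+1}-1}\asymp A_{2^J-1}$, $B_{2^{J+1}-1}\asymp B_{2^J-1}$. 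This is exactly where $\DD$ enters again: by the block estimates the single extra block gained or lost when passing from $J$ to $J+1$ is, thanks to $\omg(1-2^{-(J+1)})\asymp\omg(1-2^{-J})$, comparable to an adjacent block already present in the remaining sum, so neither factor can jump by more than an $\om$-dependent constant. Feeding this into the sandwich gives $\sup_r F(r)G(r)\asymp\sup_J F(1-2^{-J})G(1-2^{-J})$ and $\sup_N A_NB_N\asymp\sup_J A_{2^J-1}B_{2^J-1}$, and combining with the net-level comparison yields $M_p(\om)\asymp K_{p,c}(\om)$. The residual low-index bookkeeping (the $J=0$ term and the harmless shift between $\sum_{n\ge N}$ and $\sum_{n\ge N+1}$) is absorbed into the constants.
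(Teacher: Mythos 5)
Your proposal is correct, and it takes a route that is organizationally different from the paper's, though it rests on the same two ingredients from Lemma~\ref{caract. pesos doblantes} (the moment comparison $\om_x\asymp\omg(1-1/x)$ and the doubling of $\omg$). The paper works at unit scale in the index: for each $r$ it fixes $N$ with $1-\frac{1}{N}\le r<1-\frac{1}{N+1}$ and compares the $k$-th term of each sum with an integral over $[k,k+1]$ (equivalently, after the substitution $s=1-\frac{1}{x}$, over $[1-\frac{1}{k},1-\frac{1}{k+1}]$), obtaining two-sided comparisons of both factors for \emph{every} matched pair $(r,N)$; since every $r$ and every $N$ occurs in such a pair, the comparison of the suprema $M_p(\om)\asymp K_{p,c}(\om)$ follows with no further discussion of how the factors vary. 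The price is paid in the direction $M_p(\om)\lesssim K_{p,c}(\om)$, where the mismatch between $r$ and $1-\frac{1}{N+1}$ forces a case analysis ($r\le\frac12$ versus $r\ge\frac12$) and the absorption of the leftover piece $\frac{1-r}{\omg(2r-1)^p}$ via doubling. Your dyadic-block scheme trades these features: the block equivalences are symmetric and computationally clean, the first display of the lemma drops out immediately by summing blocks, and no case analysis is needed; in exchange, since your continuous/discrete matching holds only at the net $r=1-2^{-J}$, $N=2^J-1$, you need the extra supremum-upgrading step, which you correctly single out as the one nontrivial point and correctly resolve: monotonicity of the four factors sandwiches off-net values between net values, and $\om\in\DD$ (through $\omg(1-2^{-(J+1)})\asymp\omg(1-2^{-J})$ and \eqref{eq:omegaIn}) makes consecutive net values of each factor comparable, because the single block gained or lost is dominated by an adjacent block already present, or by the additive $1$ at the lowest index. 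Both arguments are complete at the same level of rigor; yours isolates a reusable principle (for products of a nondecreasing and a nonincreasing factor whose consecutive dyadic values are comparable, the supremum over a doubling net suffices), while the paper's pointwise matching avoids that extra layer entirely.
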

\begin{proof}
 
Let $0<r<1$ and set $N \in \N$ such that $1-\frac{1}{N}\leq r < 1-\frac{1}{N+1}$. Then, by using Lemma \ref{caract. pesos doblantes},
\begin{align*}
 \sum\limits_{k=0}^N \frac{1}{(k+1)^2 \om_{2k+1}^p} &\asymp \sum\limits_{k=0}^N \frac{1}{(k+1)^2 \omg \left(1-\frac{1}{k+1}\right)^p} \gtrsim 1+ \sum\limits_{k=1}^N \int_k^{k+1} \frac{1}{x^2  \omg \left(1-\frac{1}{x}\right)^p} dx
 \\& = 1+ \int_0^{1-\frac{1}{N+1}} \frac{1}{\omg(s)^p} ds \geq 1+ \int_0^{r} \frac{1}{\omg(s)^p} ds.
\end{align*}
In addition, by Lemma \ref{caract. pesos doblantes}  again,
\begin{align}\label{eq:mpr6}
\int_r^1 \left(\frac{\omg(t)}{1-t}\right)^{p'}\,dt &\leq \int_{1-\frac{1}{N}}^1 \left(\frac{\omg(t)}{1-t}\right)^{p'}\,dt= \sum\limits_{k=N}^{\infty} \int_{1-\frac{1}{k}}^{1-\frac{1}{k+1}}  \left(\frac{\omg(t)}{1-t}\right)^{p'}\,dt \nonumber
\\&\leq \sum\limits_{k=N}^{\infty} \omg \left(1-\frac{1}{k}\right)^{p'}\int_{1-\frac{1}{k}}^{1-\frac{1}{k+1}} \frac{1}{(1-t)^{p'}}\,dt \lesssim \sum\limits_{k=N}^{\infty} \om_{2k+1}^{p'}(k+1)^{p'-2}.
\end{align}
Therefore, $K_{p,c}(\omega)\lesssim M_p(\om)$.

Conversely, in order to obtain the reverse inequality, 
a similar argument to \eqref{eq:mpr6} yields
\begin{align*}
 \sum\limits_{k=0}^N \frac{1}{(k+1)^2 \om_{2k+1}^p} &\asymp 1+ \sum\limits_{k=1}^N \frac{1}{(k+1)^2 \omg \left(1-\frac{1}{k}\right)^p} \lesssim 1+ \int_0^{1-\frac{1}{N+1}} \frac{1}{\omg(s)^p} ds.
\end{align*}
Now, on the one hand, if $r\leq \frac{1}{2}$ then
 $\sum\limits_{k=0}^N \frac{1}{(k+1)^2 \om_{2k+1}^p} \lesssim 1\le  1+ \int_0^{r} \frac{1}{\omg(s)^p} ds.$ On the other hand, if $\frac{1}{2}\leq r <1$,
$$ \sum\limits_{k=0}^N \frac{1}{(k+1)^2 \om_{2k+1}^p} \lesssim 1+ \int_0^{r} \frac{1}{\omg(s)^p} ds + 
\int_{1-\frac{1}{N}}^{1-\frac{1}{N+1}} \frac{1}{\omg(s)^p}ds\lesssim 1+ \int_0^{r} \frac{1}{\omg(s)^p} ds +\frac{1}{N    \omg\left(1-\frac{1}{N+1}\right)^{p}}$$
So, Lemma \ref{caract. pesos doblantes}  yields
\begin{align*}
 \sum\limits_{k=0}^N \frac{1}{(k+1)^2 \om_{2k+1}^p} &\lesssim 1+ \int_0^{r} \frac{1}{\omg(s)^p} ds +\frac{1-r}{ \omg\left(2r-1 \right)^{p}}
 \\& \asymp 1+ \int_0^{r} \frac{1}{\omg(s)^p} ds + \int_{2r-1}^{r} \frac{1}{\omg(s)^p} ds 
 \\ &\lesssim 1+ \int_0^{r} \frac{1}{\omg(s)^p} ds,\quad
 \frac{1}{2}\leq r <1.
\end{align*}

Next, 
\begin{align}\label{eq:mpr7}
\sum\limits_{k=N}^{\infty} \om_{2k+1}^{p'}(k+1)^{p'-2} &\asymp \sum\limits_{k=N}^{\infty}   \omg\left(1-\frac{1}{k+2}\right)^{p'}(k+1)^{p'} \int_{1-\frac{1}{k+1}}^{1-\frac{1}{k+2}}dt \nonumber
\\ & \lesssim \int_{1-\frac{1}{N+1}}^1 \left(\frac{\omg(t)}{1-t}\right)^{p'}\,dt  \leq \int_{r}^1 \left(\frac{\omg(t)}{1-t}\right)^{p'}\,dt, 
\end{align}
and consequently,
$ M_p(\om)\lesssim K_{p,c}(\omega)$.
Finally, \eqref{eq:mpr6}  and \eqref{eq:mpr7} imply
$$\int_0^1 \left(\frac{\omg(t)}{1-t}\right)^{p'}\,dt  \asymp \sum\limits_{k=0}^{\infty} \om_{2k+1}^{p'}(k+1)^{p'-2}.$$
This finishes the proof.
\end{proof}

We will also need the following description of the class $\DDD$. 
\begin{lemma}\label{le:descriptionDDD}
Let $\omega$ be a radial weight. Then the following conditions are equivalent:
\begin{enumerate}
\item[(i)] $\omega\in\DDD$;
\item[(ii)] The function defined as $ \widetilde{\omega}(r)=\frac{\widehat{\omega}(r)}{1-r}$, $0\leq r<1$,
is a radial weight and satisfies 
$$\widehat{\omega}(r)\asymp \widehat{\widetilde{\omega}}(r),\quad 0\le r<1.$$  
\end{enumerate}
\end{lemma}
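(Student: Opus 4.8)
The plan is to work throughout with the tail integral $\omg$ and the auxiliary tail $\widehat{\widetilde{\omega}}(r)=\int_r^1\frac{\omg(s)}{1-s}\,ds$, proving each implication by a direct two-sided comparison of $\widehat{\widetilde{\omega}}$ with $\omg$. The one structural fact I would use repeatedly is that, whenever $\widetilde{\omega}\in L^1_{[0,1)}$, the tail $\widehat{\widetilde{\omega}}$ is absolutely continuous and strictly positive on $[0,1)$ with $\widehat{\widetilde{\omega}}\,'(r)=-\widetilde{\omega}(r)=-\frac{\omg(r)}{1-r}$ a.e.

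For (i)$\Rightarrow$(ii), suppose $\omega\in\DDD$. First I would check $\widetilde{\omega}\in L^1_{[0,1)}$: by Lemma~\ref{caract. D check}(ii) the condition $\omega\in\Dd$ forces genuine power decay $\omg(s)\lesssim\left(\frac{1-s}{1-r}\right)^{\a}\omg(r)$ with some $\a>0$ for $r\le s<1$, so $\widetilde{\omega}(s)\lesssim(1-s)^{\a-1}$ is integrable near $1$, and $\widetilde{\omega}>0$ since $\omg>0$. Integrating this same power decay against $(1-s)^{-1}$ over $[r,1)$ yields the upper estimate $\widehat{\widetilde{\omega}}(r)\lesssim\frac{1}{\a}\omg(r)$. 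For the reverse estimate I would invoke $\omega\in\DD$: restricting the integral to $[r,\tfrac{1+r}{2}]$, where $\frac{1}{1-s}\ge\frac{1}{1-r}$ and $\omg(s)\ge\omg(\tfrac{1+r}{2})$ by monotonicity of $\omg$, gives $\widehat{\widetilde{\omega}}(r)\ge\frac{1}{1-r}\cdot\frac{1-r}{2}\,\omg(\tfrac{1+r}{2})\gtrsim\omg(r)$, the final step being exactly the defining inequality of $\DD$. Hence $\widehat{\widetilde{\omega}}\asymp\omg$, which is (ii).

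For (ii)$\Rightarrow$(i), assume $\widetilde{\omega}$ is a weight and $\widehat{\widetilde{\omega}}\asymp\omg$. Writing $W=\widehat{\widetilde{\omega}}$, the identity $-W'(r)=\frac{\omg(r)}{1-r}$ together with $\omg\asymp W$ produces the logarithmic-derivative bound $\frac{c_1}{1-r}\le-\frac{W'(r)}{W(r)}\le\frac{c_2}{1-r}$ a.e. Integrating $-(\log W)'$ from $r$ to $t$ with $r\le t$ gives $\left(\frac{1-r}{1-t}\right)^{c_1}\lesssim\frac{\omg(r)}{\omg(t)}\lesssim\left(\frac{1-r}{1-t}\right)^{c_2}$ for $0\le r\le t<1$, after transferring back from $W$ to $\omg$. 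Evaluating the upper bound at $t=\tfrac{1+r}{2}$ is the definition of $\DD$, while the lower bound is precisely the characterization of $\Dd$ recorded in Lemma~\ref{caract. D check}(ii); thus $\omega\in\DD\cap\Dd=\DDD$.

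The routine parts are the two one-sided integral estimates in the first implication. The step demanding the most care is the second implication: I must justify that $W$ is absolutely continuous and bounded away from $0$ on every $[0,R]$ so that the logarithmic-derivative computation and its integration are legitimate, and then match the resulting two-sided power comparison of $\omg$ to the exact forms of $\DD$ and $\Dd$ in Lemmas~\ref{caract. pesos doblantes} and~\ref{caract. D check}. Converting the pointwise asymptotic $\widehat{\widetilde{\omega}}\asymp\omg$ into a differential inequality and back into the power-growth characterizations is the conceptual crux of the argument.
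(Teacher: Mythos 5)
Your proof is correct, and it splits the same way as the paper's only in the first half. For (i)$\Rightarrow$(ii), your upper bound $\widehat{\widetilde{\omega}}(r)\lesssim\omg(r)$ is exactly the paper's argument via the power-decay characterization of $\Dd$ in Lemma~\ref{caract. D check}, while your lower bound (restricting the integral to $[r,\tfrac{1+r}{2}]$ and using only monotonicity of $\omg$ plus the defining inequality of $\DD$) is a slightly more elementary substitute for the paper's use of the power-growth characterization in Lemma~\ref{caract. pesos doblantes}; both work. The converse is where you genuinely diverge: the paper never differentiates. For each $K>1$ it splits $\widehat{\widetilde{\omega}}(r)$ at $1-\frac{1-r}{K}$, bounds the near piece between $\omg\left(1-\frac{1-r}{K}\right)\log K$ and $\omg(r)\log K$ using monotonicity alone, and then plays the constant game twice: choosing $K$ large enough that $\frac{\log K}{C_2}>1$ gives $\Dd$, and $K$ close enough to $1$ that $\frac{\log K}{C_1}<1$ lets the term $\frac{\log K}{C_1}\widehat{\omega}(r)$ be absorbed, giving $\DD$. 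You instead convert $\omg\asymp W:=\widehat{\widetilde{\omega}}$ and $-W'=\frac{\omg}{1-\cdot}$ a.e.\ into the two-sided logarithmic-derivative bound $\frac{c_1}{1-r}\le-\frac{W'(r)}{W(r)}\le\frac{c_2}{1-r}$ and integrate; this is legitimate since $W$ is absolutely continuous (tail integral of the $L^1$ function $\widetilde{\omega}$) and strictly positive (the paper's standing assumption $\omg>0$), so the chain rule for $\log W$ applies on every $[r,t]\subset[0,1)$. Your route costs this extra real-analysis care but buys more: it yields the two-sided power estimates $\left(\frac{1-r}{1-t}\right)^{c_1}\lesssim\frac{\omg(r)}{\omg(t)}\lesssim\left(\frac{1-r}{1-t}\right)^{c_2}$ with explicit exponents $c_1=1/C_2$, $c_2=1/C_1$, i.e.\ the quantitative characterizations of $\Dd$ and $\DD$ simultaneously, not just the defining inequalities. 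One cosmetic point: Lemma~\ref{caract. D check}(ii) is quantified "for all $0<\alpha\le\alpha_0$", so you should remark that a power bound with exponent $c_1$ implies the same bound for every smaller exponent because $\frac{1-s}{1-t}\le1$; this is immediate and does not affect correctness.
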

\begin{proof}
(i)$\Rightarrow$(ii). By Lemma~\ref{caract. D check}, there is $\alpha>0$ such that
$$\int_{r}^1 \widetilde{\omega}(s)\,ds\lesssim \frac{\widehat{\omega}(r)}{(1-r)^\alpha}\int_r^1 (1-s)^{\alpha-1}\,ds\lesssim \widehat{\omega}(r),\quad 
0\le r<1,$$
which, in particular, implies that $\widetilde{\omega}$ is a radial weight.
On the other hand, by Lemma~\ref{caract. pesos doblantes}, there is $\beta>0$ such that 
$$\int_{r}^1 \widetilde{\omega}(s)\,ds 
\gtrsim  \frac{\widehat{\omega}(r)}{(1-r)^\beta} \int_{r}^1 (1-s)^{\beta-1}\,ds\gtrsim \widehat{\omega}(r),\quad 
0\le r<1.$$

Reciprocally, if (ii) holds, there are $C_1,C_2>0$ such that
$$C_1\widehat{\omega}(r)\le \widehat{\widetilde{\omega}}(r) \le C_2\widehat{\omega}(r),\quad 0\le r<1.$$
So, for any $K>1$,
$$\widehat{\omega}(r)\ge\frac{1}{C_2} \int_{r}^{1-\frac{1-r}{K}}\widetilde{\omega}(s)\,ds\ge 
\frac{\log K}{C_2} \widehat{\omega}\left(1-\frac{1-r}{K}\right),\quad 0\le r<1.$$
Therefore, taking $K$ such that $\frac{\log K}{C_2}>1$, $\omega\in \Dd$.

Moreover, for any $K>1$
\begin{equation*}\begin{split}
\widehat{\omega}(r) \le \frac{1}{C_1}\widehat{\widetilde{\omega}}(r)
 \le \frac{\log K}{C_1}\widehat{\omega}(r)+\frac{1}{C_1}\int_{1-\frac{1-r}{K}}^1\widetilde{\omega}(s)\,ds\quad 0\le r<1.
\end{split}\end{equation*}
If $\frac{\log K}{C_1}<1$ , then
$$\widehat{\omega}(r)\le \frac{C_2}{1-\frac{\log K}{C_1}}\widehat{\omega}\left(1-\frac{1-r}{K}\right),\quad 0\le r<1.$$
So,  $\omega\in\DD$. This finishes the proof.

\end{proof}

The previous lemma  may be used to prove  that a  differentiable non-decreasing  function $h:[0,1)\to [0,\infty)$ belongs to $L^p_{\om , [0,1)}$ if and only if it belongs to  $L^p_{\widetilde{\omega}, [0,1)}$. 
This result is essential for our purposes.
 In particular, bearing in mind Lemma~\ref{le:descriptionDDD} and two integration by parts, 
\begin{equation}\label{eq:intparts}
 \int_0^1 h(t)\om(t)\,dt\lesssim 
h(0)\widehat{\omega}(0)
+\int_0^1 h(t)\widetilde{\omega}(t)\,dt,
\end{equation}
for any differentiable non-decreasing function $h:[0,1)\to [0,\infty)$ .

Bearing in mind Lemma \ref{reformulacionMp}, our next result ensures that the Hilbert-type operators $H_\omega$ and $\widetilde{H_\omega}$ are well defined 
on $X_p\in \{H(\infty,p),H^p, D^p_{p-1}, HL(p)\}$, $1<p<\infty$ when $\om\in\DDD$ and $M_p(\omega)<\infty$.
\begin{lemma}\label{gooddefinitionv2}
Let $\om\in\DDD$  and $1<p<\infty$ such that $\int_0^1 \left(\frac{\omg(t)}{1-t}\right)^{p'}\,dt<\infty$.

Then 
$$ \int_0^1 M_\infty(t,f)\om(t)\,dt\lesssim 
\|f\|_{H(\infty,p)}\left(\int_0^1 \left(\frac{\omg(t)}{1-t}\right)^{p'}\,dt \right)^{1/p'},\quad f\in \H(\D).$$
In particular,  
$T(f)\in \H(\D)$ for any $f\in X_p$, where $X_p\in \{H(\infty,p),H^p, D^p_{p-1}, HL(p)\}$ and $T\in\{H_\omega, \widetilde{H_\omega}\}$.
\end{lemma}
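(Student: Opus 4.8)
The plan is to establish the displayed integral inequality first, and then read off the analyticity of $T(f)$ as an immediate consequence.

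For the inequality, the key observation is that $t\mapsto M_\infty(t,f)$ is non-decreasing on $[0,1)$, so the integration-by-parts estimate \eqref{eq:intparts}, available since $\om\in\DDD$ (via Lemma~\ref{le:descriptionDDD}), applies with $h=M_\infty(\cdot,f)$. This reduces the claim to controlling $M_\infty(0,f)\,\omg(0)+\int_0^1 M_\infty(t,f)\,\widetilde{\omega}(t)\,dt$, where $\widetilde{\omega}(t)=\frac{\omg(t)}{1-t}$. Applying Hölder's inequality with exponents $p$ and $p'$ to the second term gives
$$\int_0^1 M_\infty(t,f)\,\widetilde{\omega}(t)\,dt\le\left(\int_0^1 M_\infty(t,f)^p\,dt\right)^{1/p}\left(\int_0^1\left(\frac{\omg(t)}{1-t}\right)^{p'}\,dt\right)^{1/p'}=\|f\|_{H(\infty,p)}\left(\int_0^1\left(\frac{\omg(t)}{1-t}\right)^{p'}\,dt\right)^{1/p'}.$$
For the boundary term, $M_\infty(0,f)=|f(0)|\le\|f\|_{H(\infty,p)}$ since $M_\infty(t,f)\ge|f(0)|$ for every $t$, while restricting the $p'$-integral to $[0,\frac12]$ and using $\frac{1}{1-t}\ge1$ together with $\omg(t)\ge\omg(\frac12)\gtrsim\omg(0)$ (the last step by $\om\in\DD$ at $r=0$) yields the lower bound $\omg(0)\lesssim\bigl(\int_0^1(\frac{\omg(t)}{1-t})^{p'}\,dt\bigr)^{1/p'}$. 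Combining these estimates proves the desired inequality.

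For the ``in particular'' statement, Lemma~\ref{HLP C LP} reduces matters to $X_p=H(\infty,p)$, since $\|f\|_{H(\infty,p)}\lesssim\|f\|_{X_p}$ for every admissible $X_p$; hence $\int_0^1 M_\infty(t,f)\om(t)\,dt<\infty$ for all $f\in X_p$. Expanding the kernel via \eqref{eq:B} gives $K^\om_t(z):=\frac{1}{z}\int_0^z B^\om_t(\zeta)\,d\zeta=\sum_{n=0}^\infty\frac{(tz)^n}{2\om_{2n+1}(n+1)}$, so that for $|z|\le r<1$ and $t\in[0,1)$,
$$|K^\om_t(z)|\le\sum_{n=0}^\infty\frac{r^n}{2\om_{2n+1}(n+1)}=:C_r.$$
Because $\om\in\DD$ gives $\om_{2n+1}\asymp\omg(1-\frac{1}{2n+1})\gtrsim(n+1)^{-\beta}$ for some $\beta>0$ (Lemma~\ref{caract. pesos doblantes}(ii)--(iii), the lower bound on $\omg$ coming from the $\DD$ estimate at $r=0$), the series defining $C_r$ converges. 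Then on $|z|\le r$ the integrand $t\mapsto f(t)K^\om_t(z)\om(t)$ is dominated by $C_r\,M_\infty(t,f)\,\om(t)$, which is integrable by the first part; uniform convergence on each compact subset of $\D$ together with the analyticity of $z\mapsto K^\om_t(z)$ shows that both $H_\omega(f)$ and $\widetilde{H_\omega}(f)$ belong to $\H(\D)$.

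I expect the main obstacle to be the rigorous justification of \eqref{eq:intparts} for $h=M_\infty(\cdot,f)$, which is continuous and non-decreasing but need not be differentiable; this is handled through monotonicity (a Stieltjes integration by parts, or approximation by smooth increasing functions). The remaining technical ingredient, the polynomial lower bound $\om_{2n+1}\gtrsim(n+1)^{-\beta}$ ensuring $C_r<\infty$, is routine once the $\DD$ estimate $\omg(t)\gtrsim(1-t)^{\beta}$ is invoked.
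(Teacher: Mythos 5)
Your proof is correct and follows essentially the same route as the paper's: the integration-by-parts estimate \eqref{eq:intparts} (available since $\om\in\DDD$ via Lemma~\ref{le:descriptionDDD}), then H\"older's inequality, then Lemma~\ref{HLP C LP} to pass from $H(\infty,p)$ to the other $X_p$-spaces. The additional details you supply --- the explicit bound $\omg(0)\lesssim\left(\int_0^1\widetilde{\om}(t)^{p'}\,dt\right)^{1/p'}$ for the boundary term, the uniform bound on the kernel $K^\om_t$ over compact subsets to conclude analyticity of $T(f)$, and the remark that $M_\infty(\cdot,f)$ need not be differentiable so \eqref{eq:intparts} requires a Stieltjes or approximation argument --- are all correct and merely make explicit what the paper leaves implicit.
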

\begin{proof}

By \eqref{eq:intparts}
\begin{equation}\label{eq:inomegatilde}
\int_0^1 M_\infty(t,f)\om(t)\,dt 
 \le |f(0)|\widehat{\omega}(0)+\int_0^1 M_\infty(t,f)\widetilde{\om}(t)\,dt.
 \end{equation}
Then,  by  H\"older's inequality
  \begin{align*}
\int_0^1 M_\infty(t,f)\om(t)\,dt &
  \le  |f(0)|\widehat{\omega}(0)+ \left( \int_0^1 M^p_\infty(t,f)\,dt\right)^{1/p}\left(\int_0^1 \widetilde{\om}(t)^{p'}\,dt \right)^{1/p'}
\\ & \lesssim 
\|f\|_{H(\infty,p)}\left(\int_0^1 \widetilde{\om}(t)^{p'}\,dt \right)^{1/p'}<\infty, \quad f\in H(\infty,p).
\end{align*}
Joining the above chain of inequalities  with Lemma~\ref{HLP C LP}, the proof is finished.
\end{proof}

Next,
for $p,q>0$ and $\alpha >-1$, let
$H^1(p,q, \alpha)$ denote the space of $f \in \H(\D)$ such that
$$ \nm{f}_{H^1(p,q, \alpha)}=\left(\abs{f(0)}^p +\int_0^1 M_q^p(r,f')(1-r)^{\a}dr\right)^{\frac{1}{p}}<\infty.$$
It is worth mentioning that $H^1\left (p,p, p-1\right)=D^p_{p-1}$.

The following  inequality will be used in the proof of Theorem~\ref{bounded Xp}. It
was proved in \cite[Corollary~3.1]{MatPav}.

\begin{letterlemma}\label{le:Hpmixto}
Let $1<q<p<\infty$. Then, 
$$\|f\|_{H^p}\lesssim  \nm{f}_{H^1\left (p,q, p\left(1-\frac{1}{q}\right)\right)}, \quad f\in \H(\D).$$
\end{letterlemma}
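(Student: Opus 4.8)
The plan is to pass to the dyadic Littlewood--Paley decomposition $\Delta_n f(z)=\sum_{k\in I(n)}\widehat{f}(k)z^k$ and to prove the inequality block by block, the key point being that the two elementary ``gains'' available on a single spectral block --- a Bernstein gain coming from the derivative and a Nikolskii gain coming from lowering the angular exponent from $p$ to $q$ --- combine into exactly the weight $\alpha=p\left(1-\tfrac1q\right)$ appearing in the statement.

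First I would record the block estimate. For a polynomial with spectrum in $I(n)$ one has the Bernstein equivalence $\|\Delta_n f\|_{H^p}\asymp 2^{-n}\|\Delta_n f'\|_{H^p}$ (differentiation and integration act as bounded Fourier multipliers on an annular spectrum, in the same spirit as the use of the M.~Riesz projection in Lemma~\ref{lema de los lambda}), together with the Nikolskii inequality $\|\Delta_n f'\|_{H^p}\lesssim 2^{n(1/q-1/p)}\|\Delta_n f'\|_{H^q}$, valid because $q<p$. Multiplying these gives $\|\Delta_n f\|_{H^p}^p\lesssim 2^{-np(1-1/q+1/p)}\|\Delta_n f'\|_{H^q}^p = 2^{-n(\alpha+1)}\|\Delta_n f'\|_{H^q}^p$, so the exponent produced on the right is precisely $\alpha+1$ with $\alpha=p\left(1-\tfrac1q\right)$.

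Next I would invoke the mixed-norm decomposition of \cite[Theorem~2.1]{MatPav} (the device already used for $D^p_{p-1}$ in Lemma~\ref{lema f test Mp}), which yields $\|f\|_{H^1(p,q,\alpha)}^p\asymp |f(0)|^p+\sum_{n}2^{-n(\alpha+1)}\|\Delta_n f'\|_{H^q}^p$. Combined with the block estimate, the right-hand side of the desired inequality therefore dominates $|f(0)|^p+\sum_n\|\Delta_n f\|_{H^p}^p$, and it only remains to bound $\|f\|_{H^p}$ by this last quantity. For $1<q<p\le 2$ this is immediate: the classical Littlewood--Paley square-function characterization $\|f\|_{H^p}\asymp |f(0)|+\bigl\|(\sum_n|\Delta_n f|^2)^{1/2}\bigr\|_{L^p(\T)}$ together with the pointwise inequality $(\sum_n|\Delta_n f|^2)^{1/2}\le(\sum_n|\Delta_n f|^p)^{1/p}$ (the embedding $\ell^p\hookrightarrow\ell^2$ for $p\le 2$) gives $\|f\|_{H^p}^p\lesssim |f(0)|^p+\sum_n\|\Delta_n f\|_{H^p}^p$, which closes the chain.

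The main obstacle is the range $p>2$. There the square-function reassembly goes the wrong way: the characterization only bounds the $\ell^2$-sum of the blocks by $\|f\|_{H^p}$, and the reverse bound $\|f\|_{H^p}\lesssim(\sum_n\|\Delta_n f\|_{H^p}^p)^{1/p}$ is genuinely false, so the block estimate alone cannot suffice (the same $\ell^2$ versus $\ell^p$ mismatch scuttles the naive $g$-function route as well). I would treat $p>2$ by duality: under the $H^p$--$H^{p'}$ pairing and the corresponding duality of the weighted mixed-norm spaces, the embedding $H^1(p,q,\alpha)\hookrightarrow H^p$ is equivalent to a reverse embedding $H^{p'}\hookrightarrow H^1(p',q',\alpha')$ with conjugate parameters and $p'<2$, which falls into the case already settled. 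Pinning down the dual space and the exact conjugate weight $\alpha'$ is the delicate technical step; this is precisely what is packaged in \cite[Corollary~3.1]{MatPav}, on which I would ultimately rely.
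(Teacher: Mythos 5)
The paper offers no proof of this lemma at all: it is a lettered result quoted verbatim from \cite[Corollary~3.1]{MatPav}, so the only question is whether your blind argument actually closes. For the range $1<q<p\le 2$ it does, and it is correct: the Bernstein equivalence $\|\Delta_n f\|_{H^p}\asymp 2^{-n}\|(\Delta_n f)'\|_{H^p}$ (Lemma~\ref{lema de los lambda} plus the M.~Riesz projection), the Nikolskii gain $2^{n(1/q-1/p)}$ on a block of degree $\asymp 2^n$, the decomposition $\|f\|^p_{H^1(p,q,\alpha)}\asymp|f(0)|^p+|f'(0)|^p+\sum_n 2^{-n(\alpha+1)}\|\Delta_n f'\|^p_{H^q}$ from \cite[Theorem~2.1]{MatPav} (the same one the paper invokes in the proof of Theorem~\ref{bounded Xp}), and the reassembly $\|f\|^p_{H^p}\lesssim |f(0)|^p+\sum_n\|\Delta_n f\|^p_{H^p}$ via the square function and $\ell^p\hookrightarrow\ell^2$ all hold, and your exponent bookkeeping producing $\alpha+1$ with $\alpha=p\left(1-\frac1q\right)$ is exact. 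For that range you prove more than the paper records.

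The case $p>2$, however, contains a genuine gap, and it sits exactly where the difficulty of the lemma lives. Dualizing an embedding reverses it: from $H^1(p,q,\alpha)\hookrightarrow H^p$ you pass to $H^{p'}=(H^p)^*\hookrightarrow \left(H^1(p,q,\alpha)\right)^*$. Concretely, writing the pairing through derivatives, $\langle f,g\rangle\approx f(0)\overline{g(0)}+\int_0^1\int_{\T}f'\,\overline{g'}\,(1-r)\,\frac{d\theta}{2\pi}\,dr$, and applying H\"older in $\theta$ and then in $r$ with the split $(1-r)=(1-r)^{\alpha/p}(1-r)^{1/q}$, what you must prove is
$$\int_0^1 M_{q'}^{p'}(r,g')\,(1-r)^{p'\left(1-\frac{1}{q'}\right)}\,dr\ \lesssim\ \|g\|^{p'}_{H^{p'}},$$
an embedding of a \emph{Hardy space into a mixed-norm space} with inner exponent $q'>p'$. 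The case you settled goes in the opposite direction (mixed-norm into Hardy, inner exponent smaller than the outer one), so the dual statement does \emph{not} ``fall into the case already settled''; duality flips which space embeds into which, not just the exponents. Nor can your block method reach this companion inequality: the same Bernstein--Nikolskii step makes the dyadic exponent cancel exactly, reducing it to $\sum_n\|\Delta_n g\|^{p'}_{H^{p'}}\lesssim\|g\|^{p'}_{H^{p'}}$, which is false for $p'<2$ (take $g(z)=\sum_n n^{-1/p'}z^{2^n}$: the left side is $\sum_n n^{-1}=\infty$, while $\|g\|_{H^{p'}}\asymp\bigl(\sum_n n^{-2/p'}\bigr)^{1/2}<\infty$). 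Indeed, by the same duality this reverse embedding for $p'<2$ is equivalent to the lemma for $p>2$, so your reduction is circular, and the final fallback on \cite[Corollary~3.1]{MatPav} cites the very statement being proved. To close $p>2$ honestly you need a genuinely new ingredient --- the reverse-direction mixed-norm Littlewood--Paley inequality of Flett/Mateljevi\'c--Pavlovi\'c --- or you simply quote \cite[Corollary~3.1]{MatPav} for the whole lemma, which is what the paper does.
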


Now, we are ready to prove the main result of this section.

\vspace{1em}
\begin{Prf} {\em{Theorem~\ref{bounded Xp}. }}
The implication (i)$\Rightarrow$(ii)
was proved in  Proposition~\ref{cond nec Xp}. The implication (ii)$\Rightarrow$(iii) is clear, 
and  (iii)$\Rightarrow$(ii) follows from the third step in the proof of Proposition~\ref{cond nec Xp}.
On the other hand, bearing in mind that $M_{p,c}(\om)<\infty$ if and only if $K_{p,c}(\om)<\infty$, (iii)$\Leftrightarrow$(iv) follows from Lemma~\ref{reformulacionMp}.
 Then, it is enough to prove (ii)$\Rightarrow$(i).
 
\par{\bf{(ii)$\mathbf{\Rightarrow}$(i)}}.

\vspace{1em}

{\bf { First Step.}}
We will prove the inequality
\begin{equation}\label{eq:jp1}
\| T(f)\|_{Y_p}\lesssim \|f\|_{X_p}+ \| \widetilde{H_\omega}(f)\|_{Y_p},\quad f\in X_p.
\end{equation}

By Lemma~\ref{HLP C LP} and Lemma~\ref{le:Hpmixto}, it is enough to prove 
\begin{equation}\label{eq:jp2}
\| H_\omega(f)\|_{H^1\left (p,q, p\left(1-\frac{1}{q}\right)\right)}\lesssim \|f\|_{X_p}+ \| \widetilde{H_\omega}(f)\|_{Y_p},\quad 
1<q,p<\infty,\quad f\in X_p.
\end{equation}
Let $f \in X_p$. Then, Lemma \ref{reformulacionMp} and Lemma~\ref{gooddefinitionv2} ensure that $H_\om(f)\in\H(\D)$. By
 \cite[Theorem~2.1]{MatPav} 
 \begin{align}\label{eq:mpr8}
 \nm{H_{\omega} (f)}_{H^1\left (p,q, p\left(1-\frac{1}{q}\right)\right)}^p \asymp |\Ho(f)(0)|^p+ |\Ho(f)'(0)|^p + \sum\limits_{n=0}^{\infty} 
 2^{-n\left(p\left(1-\frac{1}{q}\right)+1\right)}\nm{\De_n (H_{\omega}(f))'}_{H^q}^p.
\end{align}
Due to $$ (H_{\omega} (f))'(z)=\sum\limits_{n=0}^{\infty}\frac{n+1}{2(n+2)\om_{2n+3}}\left(\int_0^1 f(t) t^{n+1}\om(t)dt\right)z^n,$$
and  using the proof of \cite[Lemma~7]{GaGiPeSis}, Lemma \ref{lema de los lambda} and \eqref{eq:Dn}, 
$$ \|\De_n (H_{\omega}(f))'\|_{H^q}^p\lesssim \frac{\left(\int_0^1 t^{2^{n-2}+1} |f(t)| \om (t)dt\right)^p}{\om_{2^{n+2}+3}^p}2^{np\left(1-\frac{1}{q}\right)}, \quad  n \geq 3.$$
Hence, by using Lemma \ref{caract. pesos doblantes},
\begin{align}\label{eq:mpr9}
\sum\limits_{n=3}^{\infty} 2^{-n\left(p\left(1-\frac{1}{q}\right)+1\right)}\nm{\De_n (H_{\omega}(f))'}_{H^q}^p \lesssim \sum\limits_{n=3}^{\infty}  \frac{2^{-2n}}{\om_{2^{n+2}}^p} \sum\limits_{k=2^{n-3}}^{2^{n-2}} \left(\int_0^1 t^{k} |f(t)| \om (t)dt\right)^p
\\
\lesssim \sum\limits_{k=1}^{\infty} \frac{\left(\int_0^1 t^{k} |f(t)| \om (t)dt\right)^p}{\om_{2k+1}^p (k+1)^2}\lesssim \| \widetilde{\Ho} (f)\|_{HL(p)}^p
\nonumber.
\end{align}

In addition, by Lemma~\ref{HLP C LP} and Lemma~\ref{gooddefinitionv2}
\begin{align}\label{eq:mpr10}
|\Ho(f)(0)|^p +|\Ho(f)'(0)|^p +\sum \limits_{n=0}^2 2^{-np}\nm{\De_n (H_{\omega}(f))'}_{H^p}^p \lesssim \|f\|_{X_p}^p.
\end{align}

Therefore, by putting together \eqref{eq:mpr8}, \eqref{eq:mpr9} and \eqref{eq:mpr10}

\begin{align*}
 \nm{H_{\omega} (f)}_{H^1\left(p,q, p\left(1-\frac{1}{q}\right)\right)}^p 
\lesssim  \|f\|_{X_p}^p +  \| \widetilde{\Ho} (f)\|_{HL(p)}^p.
\end{align*}
The above inequality, together with Lemma~\ref{le:Htildeequiv}, yields \eqref{eq:jp2}.

{\bf{Second Step.}} We will prove the inequality
\begin{equation}\label{eq:jp11}
\| \widetilde{H_\omega}(f)\|_{D^p_{p-1}}\lesssim \|f\|_{H(\infty,p)},\quad f\in H(\infty,p).
\end{equation}

We denote by 
\begin{equation}\label{eq:Gwt}
G^\omega_t(z)=\frac{d}{dz}\left(\frac{1}{z}\int_0^z B^{\om}_t(\z)d\z \right).
\end{equation} 
By \cite[Lemma~B]{PelRosa21}
$$ M_p (r, G_t^{\om})\lesssim\left( 1+\int_0^{rt}\frac{ds}{\omg(s)(1-s)^p} \right)^{1/p}
\lesssim \frac{1}{\omg(rt) (1-rt)^{1-\frac{1}{p}}},\quad 0\le r,t<1,$$
which together with
 Minkowski's inequality yields
\begin{align*}
 \nm{\widetilde{H_{\omega}}(f)}_{D^p_{p-1}}^p
    &\lesssim \abs{\widetilde{H_{\omega}}(f)(0)}^p +\int_0^1 \left(\int_0^1 \vert f(t)\vert \om(t) M_p (r, G_t^{\om} ) dt\right)^p (1-r)^{p-1}\,dr
   \\ &\lesssim \abs{\widetilde{H_{\omega}}(f)(0)}^p +\int_0^1 \left(\int_0^1 \frac{\vert f(t)\vert \om(t)}{\omg(rt) (1-rt)^{1-\frac{1}{p}}} dt\right)^p (1-r)^{p-1}\,dr
   \\ &\leq \abs{\widetilde{H_{\omega}}(f)(0)}^p +\int_0^1 \left(\int_0^1 \frac{ M_{\infty}(t,f)  \om(t)}{\omg(rt) (1-rt)^{1-\frac{1}{p}}} dt\right)^p (1-r)^{p-1}\,dr.
\end{align*}
Now, 
by \eqref{eq:intparts}
\begin{equation}\begin{split}\label{eq:jp4}
 \nm{\widetilde{H_{\omega}}(f)}_{D^p_{p-1}}^p
    &\lesssim \abs{\widetilde{H_{\omega}}(f)(0)}^p + |f(0)|^p +\int_0^1 \left( \int_0^1 \frac{ M_{\infty}(t,f) }{\omg(rt) (1-rt)^{1-\frac{1}{p}}} \frac{\omg(t)}{1-t} dt\right)^p (1-r)^{p-1} \,dr.
 \end{split}\end{equation}
 Next, by Lemma \ref{reformulacionMp}, $M_{p,c}(\om)<\infty$ holds, so \cite[Theorem 2]{Muckenhoupt1972} yields
\begin{equation}\begin{split}\label{eq:jp5}
\int_0^1 \left( \int_r^1 \frac{ M_{\infty}(t,f) }{\omg(rt) (1-rt)^{1-\frac{1}{p}}} \frac{\omg(t)}{1-t} dt\right)^p (1-r)^{p-1} \,dr & \asymp \int_0^1 \left( \int_r^1 M_{\infty}(t,f) \frac{\omg(t)}{1-t} dt\right)^p \frac{1}{\omg(r)^{p}} \,dr  
\\& \lesssim \|f\|^p_{H(\infty,p)}
 \end{split}\end{equation} 

On the other hand, by  \cite[Theorem 1]{Muckenhoupt1972}, 
\begin{equation}\begin{split}\label{eq:jp6}
 \int_0^1 \left( \int_0^r \frac{ M_{\infty}(t,f) }{\omg(rt) (1-rt)^{1-\frac{1}{p}}} \frac{\omg(t)}{1-t} dt\right)^p (1-r)^{p-1} \,dr 
 &\asymp \int_0^1 \left( \int_0^r \frac{ M_{\infty}(t,f) }{ (1-t)^{2-\frac{1}{p}}} dt\right)^p (1-r)^{p-1} \,dr  \\ &\lesssim \|f\|^p_{H(\infty,p)},
 \end{split}\end{equation}
where in the last inequality we have used that
$\sup\limits_{0<r<1} \left(\int_r^1(1-t)^{p-1}\right)^{\frac{1}{p}} \left(\int_0^r(1-t)^{-1-p'}\right)^{\frac{1}{p'}}<\infty$.
So, joining Lemma~\ref{gooddefinitionv2}, \eqref{eq:jp4}, \eqref{eq:jp5} and \eqref{eq:jp6},  we get \eqref{eq:jp11}.

{\bf{Third Step.}} Since $\omega\in\DD$, by Lemma~\ref{le:Htildeequiv} 
$$\| \widetilde{H_\omega}(f)\|_{D^p_{p-1}}\asymp \| \widetilde{H_\omega}(f)\|_{Y_p}, \quad f\in X_p,$$
for $Y_p\in\{H(\infty,p),H^p, D^p_{p-1}, HL(p)\}$. This, together with \eqref{eq:jp1}, \eqref{eq:jp11} and Lemma~\ref{HLP C LP} implies
\begin{equation*}\begin{split}
\| T(f)\|_{Y_p} &\lesssim \|f\|_{X_p}+ \| \widetilde{H_\omega}(f)\|_{Y_p}
\\ & \asymp  \|f\|_{X_p}+ \| \widetilde{H_\omega}(f)\|_{D^p_{p-1}}
\\ &\lesssim \|f\|_{X_p}+\|f\|_{H(\infty,p)}
\\ & \lesssim \|f\|_{X_p}, \quad f\in X_p.
\end{split}\end{equation*}

This finishes the proof.
\end{Prf}

\subsection{ $\mathbf{H_\omega: X_p\to Y_p}$ versus
  $\mathbf{H_\omega: L^p_{[0,1)}\to Y_p,\, 1<p<\infty}$.}

\par An additional byproduct of Theorem~\ref{bounded Xp} is the following improvement of \cite[Theorem~3]{PelRosa21}.

\vspace{1em}
\begin{Prf}{\em{Corollary~\ref{th:lphp}.}}
(i)$\Rightarrow$(ii). By Lemma~\ref{HLP C LP}, $T:Y_p \to Y_p$ is bounded, and so by Theorem~\ref{bounded Xp}, $\omega\in\DDD$.
Next, by the proofs of  \cite[Theorems~3 and 4]{PelRosa21} we obtain $m_p(\om)<\infty$.

(ii)$\Rightarrow$(iii) is clear. Finally,  (iii)$\Rightarrow$(i) follows from Lemma~\ref{HLP C LP} and \cite[Theorem~3]{PelRosa21}.
\end{Prf}

Putting together  Lemma~\ref{HLP C LP}, Theorem~\ref{bounded Xp} and Corollary~\ref{th:lphp}, we deduce the following result.

\begin{corollary}\label{co:goodweights}
Let  $\om$ be a radial weight and $1<p<\infty$. Let $X_p,Y_p\in \{H(\infty,p),H^p, D^p_{p-1}, HL(p)\}$
and let $T \in \{\Ho , \widetilde{\Ho}\}$. If there exists $C>0$ such that 
\begin{equation}\label{eq:regularity}
\om(t)\leq C \frac{\omg(t)}{1-t}\text{ for all $0\leq t <1$.}
\end{equation}
Then, the following statements are equivalent:
\begin{itemize}
\item[(i)] $T:L^p_{[0,1)}\to Y_p$ is bounded;
\item[(ii)] $T: X_p \to Y_p$ is bounded;
\item[(iii)] $\om \in \DDD$ and $M_{p,c}(\om)<\infty$;
\item[(iv)] $\om \in \DD$ and $M_{p,c}(\om)<\infty$;
\item[(v)] $\om \in \DDD$ and $m_p(\om)<\infty$;
\item[(vi)] $\om \in \DD$ and $m_p(\om)<\infty$.
\end{itemize}
\end{corollary}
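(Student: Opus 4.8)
The plan is to obtain the result purely by assembling the equivalences already proved and bridging them with a single pointwise comparison, which will be the only place where hypothesis~\eqref{eq:regularity} is used. The two ``internal'' chains come for free: Corollary~\ref{th:lphp} gives at once that (i), (v) and (vi) are equivalent, and Theorem~\ref{bounded Xp} gives that (ii), (iii) and (iv) are equivalent. Indeed, (ii) and (iv) are literally statements (i) and (iv) of Theorem~\ref{bounded Xp}, while (iii) is recovered by noting that (iv), via the implication (iv)$\Rightarrow$(ii) of Theorem~\ref{bounded Xp}, already forces $\om\in\DDD$ (and $\DDD\subset\DD$ makes the converse inclusion trivial). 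Thus it remains only to connect the group $\{\text{(ii),(iii),(iv)}\}$ with the group $\{\text{(i),(v),(vi)}\}$.

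First I would prove $\text{(i)}\Rightarrow\text{(ii)}$, which requires no regularity hypothesis. If $f\in X_p$, then taking the argument $\theta=0$ gives $|f(t)|\le M_\infty(t,f)$ for $t\in[0,1)$, so Lemma~\ref{HLP C LP} yields $\|f\|_{L^p_{[0,1)}}\le\|f\|_{H(\infty,p)}\lesssim\|f\|_{X_p}$. Since $T(f)$ depends only on the restriction of $f$ to $[0,1)$, boundedness of $T:L^p_{[0,1)}\to Y_p$ forces $\|T(f)\|_{Y_p}\lesssim\|f\|_{L^p_{[0,1)}}\lesssim\|f\|_{X_p}$, which is (ii). This establishes the inclusion of the $L^p$-group into the $X_p$-group.

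The reverse bridge $\text{(iv)}\Rightarrow\text{(vi)}$ is where \eqref{eq:regularity} enters, and I would argue directly at the level of the defining quantities. Recall that $M_{p,c}(\om)<\infty$ is equivalent to $K_{p,c}(\om)<\infty$, and that $m_p(\om)$ and $K_{p,c}(\om)$ share the identical first factor $(1+\int_0^r\omg(t)^{-p}\,dt)^{1/p}$; hence it suffices to compare their tails. The pointwise bound $\om(t)\lesssim\omg(t)/(1-t)$ supplied by \eqref{eq:regularity} gives $\int_r^1\om(t)^{p'}\,dt\lesssim\int_r^1\left(\omg(t)/(1-t)\right)^{p'}\,dt$ for every $r\in(0,1)$, whence $m_p(\om)\lesssim K_{p,c}(\om)$. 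Therefore $M_{p,c}(\om)<\infty$ implies $m_p(\om)<\infty$, and since (iv) also carries $\om\in\DD$, we arrive at (vi). Combined with the two internal chains and $\text{(i)}\Rightarrow\text{(ii)}$, this closes the circle and yields the equivalence of all six statements.

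I do not expect a genuine obstacle here: the analytic substance has already been spent in Theorem~\ref{bounded Xp} and Corollary~\ref{th:lphp}, and the only new ingredient is the short tail estimate above. The one point demanding care is the bookkeeping---ensuring that \eqref{eq:regularity} is invoked solely in $\text{(iv)}\Rightarrow\text{(vi)}$, and that the passage between the $\DD$ and $\DDD$ versions of each condition is handled consistently when moving between the two previously established results.
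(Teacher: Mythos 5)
Your proposal is correct and follows essentially the same route as the paper: the two internal equivalence groups come from Theorem~\ref{bounded Xp} and Corollary~\ref{th:lphp}, the bridge (i)$\Rightarrow$(ii) comes from Lemma~\ref{HLP C LP}, and \eqref{eq:regularity} is used only once for the reverse bridge via the tail comparison $\int_r^1\om(t)^{p'}\,dt\lesssim\int_r^1\bigl(\omg(t)/(1-t)\bigr)^{p'}\,dt$. The only (immaterial) difference is that you run the regularity bridge as (iv)$\Rightarrow$(vi) while the paper runs it as (iii)$\Rightarrow$(v); the underlying estimate is identical.
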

\begin{proof}
The implication (i)$\Rightarrow$(ii) follows from Lemma~\ref{HLP C LP}, and (ii)$\Leftrightarrow$(iii)$\Leftrightarrow$(iv) follows from Theorem \ref{bounded Xp}.
Next, (iii)$\Rightarrow$(v) is a byproduct of
the hypothesis \eqref{eq:regularity}.
The equivalence (v)$\Leftrightarrow$(vi) and the implication
 (vi)$\Rightarrow$(i) have been proved in Corollary~\ref{th:lphp}. This finishes the proof. 
\end{proof}

  Next, we will prove that there are weights $\omega\in \DDD$,   such that
   $M_{p,c}(\om)<\infty$ and $m_p(\om)=\infty$, so in particular they do not satisfy \eqref{eq:regularity}.
Consequently, the boundedness of the operator $H_\om: L^p_{[0,1)}\to Y_p$ is not equivalent to the boundedness
of the the operator $H_\om: X_p\to Y_p$, where $X_p,Y_p\in \{H(\infty,p),H^p, D^p_{p-1}, HL(p)\}$.   
    With this aim we  prove the next result, which shows that despite its innocent looking condition, the class $\DDD$ has in a sense a complex nature.

\begin{lemma}\label{le:cont}
Let  $1<p<\infty$ and $\nu\in\DDD$. Then, there exists $\omega\in\DDD$ such that $$\omg(t)\asymp \widehat{\nu}(t), \quad t\in [0,1),$$
$\omega\in L^{p'}_{[0,r_0]}$ for any $r_0\in (0,1)$ and  $\omega\notin L^{p'}_{[0,1)}$.

\end{lemma}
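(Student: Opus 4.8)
The plan is to turn the statement into a free-standing construction and to dispose of the membership $\omega\in\DDD$ first, since it will come for free. By Lemma~\ref{le:descriptionDDD}, the hypothesis $\nu\in\DDD$ means that $\widetilde{\nu}(r)=\widehat{\nu}(r)/(1-r)$ is a radial weight with $\widehat{\widetilde{\nu}}\asymp\widehat{\nu}$. Consequently, if we manage to build a nonnegative $\omega\in L^1_{[0,1)}$ with $\omg(t)\asymp\widehat{\nu}(t)$ on $[0,1)$, then $\widetilde{\omega}(r)=\omg(r)/(1-r)\asymp\widetilde{\nu}(r)$ pointwise, so $\widetilde{\omega}$ is again a radial weight and $\widehat{\widetilde{\omega}}\asymp\widehat{\widetilde{\nu}}\asymp\widehat{\nu}\asymp\omg$; the reverse implication in Lemma~\ref{le:descriptionDDD} then yields $\omega\in\DDD$ automatically. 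Thus the whole problem reduces to producing a weight whose tail imitates $\widehat{\nu}$ but which is spiky enough near $1$ to leave $L^{p'}_{[0,1)}$ while staying bounded on every $[0,\rho]$, $\rho<1$.

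For the construction I would use the dyadic nodes $r_n=1-2^{-n}$, $n\ge0$, and place on each ring $[r_n,r_{n+1}]$ a single tall, thin spike carrying exactly the mass that $\widehat{\nu}$ loses there. Precisely, set $m_n=\widehat{\nu}(r_n)-\widehat{\nu}(r_{n+1})=\int_{r_n}^{r_{n+1}}\nu\ge0$, fix a subinterval $J_n=[r_n,r_n+\delta_n]$ with $0<\delta_n<\tfrac12(r_{n+1}-r_n)$, and let $\omega=m_n/\delta_n$ on $J_n$ and $\omega=0$ on $[r_n+\delta_n,r_{n+1}]$. Since $\widehat{\nu}(1^-)=0$, a telescoping sum gives $\omg(r_n)=\sum_{k\ge n}m_k=\widehat{\nu}(r_n)$, and because $\omg$ is nonincreasing we get $\omg(t)\in[\widehat{\nu}(r_{n+1}),\widehat{\nu}(r_n)]$ for every $t\in[r_n,r_{n+1}]$, which is exactly the range of $\widehat{\nu}(t)$ on that ring. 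As $r_{n+1}=\tfrac{1+r_n}{2}$, the upper doubling of $\nu$ (Lemma~\ref{caract. pesos doblantes}) gives $\widehat{\nu}(r_n)\le C\,\widehat{\nu}(r_{n+1})$ with $C$ independent of $n$, so the two endpoints are comparable and hence $\omg(t)\asymp\widehat{\nu}(t)$ for all $t\in[0,1)$.

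It remains to choose the widths. Since $p'>1$, the $n$-th spike contributes $\int_{J_n}\omega^{p'}=m_n^{p'}\delta_n^{1-p'}$, and because $1-p'<0$ this tends to $+\infty$ as $\delta_n\to0^+$. Hence for each $n$ with $m_n>0$ I would simply take $\delta_n$ small enough that $m_n^{p'}\delta_n^{1-p'}\ge1$. As $\widehat{\nu}(t)>0$ for all $t<1$ while $\widehat{\nu}(1^-)=0$, the masses $m_n$ cannot eventually vanish, so infinitely many are positive and $\int_0^1\omega^{p'}\ge\sum_{m_n>0}1=\infty$, giving $\omega\notin L^{p'}_{[0,1)}$. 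Conversely, any $[0,\rho]$ with $\rho<1$ meets only finitely many of the spikes, each of which is a bounded function, so $\omega$ is bounded on $[0,\rho]$ and therefore lies in $L^{p'}_{[0,\rho]}$. Finally $\omega\ge0$ and $\int_0^1\omega=\sum_nm_n=\widehat{\nu}(0)<\infty$, so $\omega$ is a genuine radial weight, and by the first paragraph $\omega\in\DDD$.

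I expect the only delicate point to be keeping the comparison $\omg\asymp\widehat{\nu}$ \emph{global} rather than merely valid at the nodes: this is what forces the rings to be spaced so that $\widehat{\nu}$ oscillates by at most a bounded factor across each of them, and it is precisely here that $\nu\in\DD$ is indispensable. By contrast, the failure of $L^{p'}$-integrability is essentially automatic once $p'>1$, since sharpening a spike inflates its $L^{p'}$-norm without touching its total mass, and the transfer of $\DDD$-membership is the soft observation of the first paragraph that these classes see only the tail $\omg$.
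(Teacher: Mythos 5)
Your proof is correct, and it shares the paper's master plan---concentrate the mass that $\widehat{\nu}$ loses on each ring accumulating at $1$ into a thin spike, so the tail integral is preserved up to constants while the weight becomes too singular for $L^{p'}$---but your execution is genuinely more elementary. The paper takes nodes $r_n=1-K^{-n}$ with $K$ supplied by the $\Dd$-characterization (Lemma~\ref{caract. D check}) applied to $\widetilde{\nu}$, uses spikes modulated by $\widetilde{\nu}$, namely $\omega=\sum_n h_n\chi_{[r_n,t_n]}\widetilde{\nu}$ with $h_n$ chosen to match the ring masses of $\widetilde{\nu}$; it must then check that the $h_n$ are well defined, estimate $h_n\asymp\widehat{\widetilde{\nu}}(r_n)\big/\bigl(\widehat{\widetilde{\nu}}(r_n)-\widehat{\widetilde{\nu}}(t_n)\bigr)$ using $\Dd$, and invoke H\"older's inequality together with the width constraint $a_n<\widehat{\widetilde{\nu}}(r_n)^p/(n+1)^{p-1}$ to force $\int_0^1\omega^{p'}=\infty$. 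Your constant-height spikes on dyadic rings make all of this unnecessary: the $L^{p'}$ mass of each spike is computed exactly as $m_n^{p'}\delta_n^{1-p'}$, so taking $\delta_n\le m_n^{p}$ already gives $m_n^{p'}\delta_n^{1-p'}\ge m_n^{p'+p-pp'}=1$ (since $p+p'=pp'$), and the standing assumption $\widehat{\nu}>0$ guarantees infinitely many $m_n>0$, so the divergence is immediate, with no H\"older step and no positivity issue to verify. Your argument also isolates where each half of the hypothesis is used: only $\nu\in\DD$ (through the dyadic step $r_{n+1}=\frac{1+r_n}{2}$) is needed for the global comparison $\omg(t)\asymp\widehat{\nu}(t)$, while $\Dd$ enters only when transferring $\DDD$ to $\omega$. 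For that last step your detour through Lemma~\ref{le:descriptionDDD} is valid but not needed: since the conditions defining $\DD$ and $\Dd$ involve only the tail $\widehat{\cdot}$, the comparison $\omg\asymp\widehat{\nu}$ hands both properties to $\omega$ directly.
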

\begin{proof}
By Lemma~\ref{le:descriptionDDD}, $\widetilde{\nu}\in\DDD$. So, we can choose 
 $K>1$ so that $\widetilde{\nu}$ satisfies \eqref{D chek y k}. Next,
  consider the sequences 
$ r_n= 1-\frac{1}{K^n},\,t_n=r_n+a_n$, with  $$0<a_n<\min\left(r_{n+1}-r_n, \frac{\left(\widehat{\widetilde{\nu}}(r_n)\right)^p}{(n+1)^{p-1}}\right),
\quad n\in\N\cup\{0\}.$$
Let 
$$\om(t)=\sum_{n=0}^\infty h_n\chi_{[r_n,t_n]}(t)\widetilde{\nu}(t),\, t\in [0,1),\quad\text{ where }\quad h_n=\frac{\widehat{\widetilde{\nu}}(r_n)-\widehat{\widetilde{\nu}}(r_{n+1})}{\widehat{\widetilde{\nu}}(r_n)-\widehat{\widetilde{\nu}}(t_{n})}, \,n\in\N\cup\{0\}.$$
Observe that the sequence $\{h_n\}_{n=0}^\infty$ is well-defined because
$$\widehat{\widetilde{\nu}}(r_n)-\widehat{\widetilde{\nu}}(t_{n})=\int_{r_n}^{t_n}
\frac{\widehat{\nu}(s)}{1-s}\,ds
\ge 
\widehat{\nu}(t_n)\log\left( 1+\frac{a_n}{1-t_n} \right)>0,\,n\in\N\cup\{0\}.
$$
Moreover, $\om$ is non-negative and
$$\int_0^1 \om(t)\,dt=\sum_{n=0}^\infty h_n\left(\widehat{\widetilde{\nu}}(r_n)-\widehat{\widetilde{\nu}}(t_{n})\right)=\sum_{n=0}^\infty \left(\widehat{\widetilde{\nu}}(r_n)-\widehat{\widetilde{\nu}}(r_{n+1})\right)=\widehat{\widetilde{\nu}}(0)=\int_0^1 \widetilde{\nu}(t)\, dt\asymp \int_0^1 \nu(t)\,dt<\infty,$$
where in the last equivalence  we have used Lemma~\ref{le:descriptionDDD}.
\par Next, take $t\in [0,1)$ and 
 $N\in\N\cup\{ 0\}$ such that $r_N\le t<r_{N+1}$. By   Lemma~\ref{le:descriptionDDD} and Lemma~\ref{caract. pesos doblantes},
\begin{align*}
\omg(t)&\le \omg(r_N)= \sum_{n=N}^\infty \left(\widehat{\widetilde{\nu}}(r_n)-\widehat{\widetilde{\nu}}(r_{n+1})\right)=\widehat{\widetilde{\nu}}(r_{N})\asymp \widehat{\nu}(r_N)\lesssim \widehat{\nu}(t) \quad\text{and}
\\ \omg(t)&\ge \omg(r_{N+1})=\sum_{n=N+1}^\infty \left(\widehat{\widetilde{\nu}}(r_n)-\widehat{\widetilde{\nu}}(r_{n+1})\right)=\widehat{\widetilde{\nu}}(r_{N+1})\asymp \widehat{\nu}(r_{N+1})\gtrsim \widehat{\nu}(t),
\end{align*}
so $\omg(t)\asymp\widehat{\nu}(t)$ and hence $\om\in\DDD$.
\par It is clear that $\omega\in L^{p'}_{[0,r_0]}$ for any $r_0\in (0,1)$, so it only remains to prove that $\om\notin L^{p^\prime}_{[0,1)}$. 
Bearing mind \eqref{D chek y k}, we get that 
$$h_n\asymp \frac{\widehat{\widetilde{\nu}}(r_n)}{\widehat{\widetilde{\nu}}(r_n)-\widehat{\widetilde{\nu}}(t_{n})}, \quad N\in\N\cup\{ 0\}.$$
This, together with Lemma~\ref{caract. pesos doblantes} and H\"older's inequality, implies 
\begin{align*}
\int_0^1 \om(t)^{p^\prime}\,dt&=\sum_{n=0}^\infty h_n^{p^\prime} \int_{r_n}^{t_n}\left( \frac{\widehat{\nu}(t)}{1-t}\right)^{p^\prime}\, dt
\asymp \sum_{n=0}^\infty \left(\widehat{\widetilde{\nu}}(r_n)\right)^{p^\prime} \frac{\int_{r_n}^{t_n}\left( \frac{\widehat{\nu}(t)}{1-t}\right)^{p^\prime}\, dt}{\left(\int_{r_n}^{t_n} \frac{\widehat{\nu}(t)}{1-t}\, dt\right)^{p^\prime}}
\\ &
\ge 
\sum_{n=0}^\infty \left( \frac{\widehat{\widetilde{\nu}}(r_n)}{(t_n-r_n)^{1/p}}\right)^{p^\prime}
=\sum_{n=0}^\infty \left( \frac{\widehat{\widetilde{\nu}}(r_n)}{a_n^{1/p}}\right)^{p^\prime}\ge \sum_{n=0}^\infty (n+1)=\infty.
\end{align*}
\end{proof}

\begin{corollary}\label{co:comparisonLp-Xp}
Let   $1<p<\infty$ and $X_p,Y_p\in \{H(\infty,p),H^p, D^p_{p-1}, HL(p)\}$.
For each radial weigth $\nu$ such that $ Q: L^p_{[0,1)}\to Y_p$ is bounded, where $Q\in\{ H_\nu,\widetilde{H_\nu}\}$, there is a radial weight 
$\omega$ such that $$\widehat{\omega}(t)\asymp \widehat{\nu}(t),\quad t\in [0,1),$$
$\omega\in L^{p'}_{[0,r_0]}$ for any $r_0\in (0,1)$,    $T: X_p\to Y_p$ is bounded and 
 $T: L^p_{[0,1)}\to Y_p$ is not bounded. Here $T\in\{ H_\omega,\widetilde{H_\omega}\}$.
\end{corollary}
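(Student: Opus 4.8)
The plan is to feed the weight $\nu$ supplied by the hypothesis into Lemma~\ref{le:cont} and then read off the two non-equivalent boundedness statements directly from Theorem~\ref{bounded Xp} and Corollary~\ref{th:lphp}. First I would invoke Corollary~\ref{th:lphp}: since $Q\in\{H_\nu,\widetilde{H_\nu}\}$ is bounded from $L^p_{[0,1)}$ to $Y_p$, the weight $\nu$ satisfies $\nu\in\DDD$ and $m_p(\nu)<\infty$. Because $m_p(\nu)<\infty$ forces $M_{p,c}(\nu)<\infty$ for any $\nu\in\DD$ (this is the implication recorded after \eqref{Xp:Hinftyp} in the introduction, which rests on $H^p\subset H(\infty,p)$), I record $M_{p,c}(\nu)<\infty$ for later use. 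Applying Lemma~\ref{le:cont} to $\nu\in\DDD$ then produces a radial weight $\omega\in\DDD$ with $\omg(t)\asymp\nug(t)$ on $[0,1)$, with $\omega\in L^{p'}_{[0,r_0]}$ for every $r_0\in(0,1)$, and with $\omega\notin L^{p'}_{[0,1)}$.

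For the boundedness of $T\in\{H_\omega,\widetilde{H_\omega}\}$ on $X_p$, the key observation is that the quantity $M_{p,c}(\cdot)$ depends on a radial weight only through its tail integral $\omg$. Hence the comparison $\omg\asymp\nug$ gives $M_{p,c}(\omega)\asymp M_{p,c}(\nu)<\infty$, so $M_{p,c}(\omega)<\infty$. Since also $\omega\in\DDD\subset\DD$, Theorem~\ref{bounded Xp} yields that $T:X_p\to Y_p$ is bounded for every choice of $X_p,Y_p\in\{H(\infty,p),H^p,D^p_{p-1},HL(p)\}$.

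For the failure of boundedness from $L^p_{[0,1)}$, I would argue that $\omega\notin L^{p'}_{[0,1)}$ forces $m_p(\omega)=\infty$. Indeed, splitting $\int_0^1\omega^{p'}=\int_0^r\omega^{p'}+\int_r^1\omega^{p'}$ and using that $\omega\in L^{p'}_{[0,r]}$, the divergence of the full integral means $\int_r^1\omega(t)^{p'}\,dt=\infty$ for \emph{every} $r\in[0,1)$. Since the factor $\bigl(1+\int_0^r\omg(t)^{-p}\,dt\bigr)^{1/p}\ge 1$, each term in the supremum defining $m_p(\omega)$ is infinite, whence $m_p(\omega)=\infty$. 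As $\omega\in\DD$, Corollary~\ref{th:lphp} then shows that $T:L^p_{[0,1)}\to Y_p$ is not bounded, which completes the proof.

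The corollary carries essentially no new difficulty beyond Lemma~\ref{le:cont}: all the genuine work---constructing a $\DDD$ weight with prescribed tail that is $p'$-integrable on every $[0,r_0]$ but not on $[0,1)$---is already done there, and what remains is the bookkeeping that $M_{p,c}$ sees only $\omg$ while $m_p$ sees $\omega$ itself. The one point that deserves a moment's care is the passage $\omega\notin L^{p'}_{[0,1)}\Rightarrow\int_r^1\omega^{p'}=\infty$ for all $r\in[0,1)$, which genuinely uses the companion property $\omega\in L^{p'}_{[0,r_0]}$ also furnished by Lemma~\ref{le:cont}; without it the tail integral could in principle be finite for some $r$.
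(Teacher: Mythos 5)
Your proposal is correct and follows essentially the same route as the paper: deduce $\nu\in\DDD$ and $M_{p,c}(\nu)<\infty$ from the hypothesis, apply Lemma~\ref{le:cont} to produce $\omega$, observe that $M_{p,c}$ depends only on the tail $\widehat{\omega}\asymp\widehat{\nu}$ to get boundedness on $X_p$ via Theorem~\ref{bounded Xp}, and use $\omega\notin L^{p'}_{[0,1)}$ to force $m_p(\omega)=\infty$ and hence unboundedness from $L^p_{[0,1)}$ via Corollary~\ref{th:lphp}. The only differences are expository: you spell out the two points the paper leaves implicit (why $M_{p,c}(\omega)<\infty$ transfers from $\nu$, and why local $p'$-integrability is needed to conclude $\int_r^1\omega^{p'}=\infty$ for every $r$), which is a worthwhile clarification but not a different argument.
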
 
\begin{proof}
Since $ Q: L^p_{[0,1)}\to Y_p$ is bounded, by Theorem~\ref{bounded Xp}, $\nu\in\DDD$ and $M_{p,c}(\nu)<\infty$. Now, by Lemma~\ref{le:cont} there is  
a radial weight $\omega$ such that 
$\widehat{\omega}(t)\asymp \widehat{\nu}(t)$, $\omega\in L^{p'}_{[0,r_0]}$ for any $r_0\in (0,1)$  and $\omega\not\in L^{p'}_{[0,1)}$. So, $m_p(\omega)=\infty$ and by Corollary~\ref{th:lphp},
$T: L^p_{[0,1)}\to Y_p$ is not bounded. Moreover, $\omega\in\DDD$ and $M_{p,c}(\om)<\infty$ because $\nu$ satisfies both properties, so Theorem \ref{bounded Xp} yields
$T: X_p\to Y_p$ is bounded.
\end{proof}

\subsection{Compactness of Hilbert-type operators on  $X_p$-spaces. Case \,$\mathbf{1 < p<\infty}$.}

For $X,Y$ two Banach spaces, a sublinear operator $L: X\to Y$ is said to be compact provided $L(A)$ has compact closure  for any bounded set $A\subset X$.
Once it has been understood the radial weights $\omega$ such that $H_\omega: X_p\to Y_p$ is bounded, $X_p,Y_p\in \{ H(\infty,p),D^p_{p-1}, H^p, HL(p)\}$,
$1<p<\infty$, it is natural to consider the analogous problem,  replacing boundedness by compactness. Theorem \ref{pr:nocompactxp} in this section answers this question, but firstly we need some previous results.

\begin{lemma}\label{Conv compact}
Let $1 < p< \infty$ and  $\om \in \DDD$ such that $\| \widetilde{\om}\|_{L^{p'}_{[0,1)}}<\infty$. Let
 $\{f_k\}_{k=0}^\infty\subset X_p  \in \{ H(\infty,p),D^p_{p-1}, H^p, HL(p)\}$ such that $\sup\limits_{k\in\N} \|f_k\|_{X_p}<\infty$ and $f_k\to 0$ uniformly on compact subsets of $\D$.
 Then the following statements hold:
\begin{enumerate}
\item[\rm(i)] $\int_0^1 |f_k(t)|\om(t)dt\to 0$ when $k\to\infty$.
\item[\rm(ii)] If $T\in\{H_\omega, \widetilde{H_\omega}\}$, then $T(f_k)\to 0$ uniformly on compact subsets of $\D$.
\end{enumerate}
\end{lemma}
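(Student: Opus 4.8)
The plan is to prove (i) first, since (ii) will follow from (i) together with a uniform bound on the kernels $K^\om_t$ on compact subsets of $\D$. The engine for (i) is the passage from $\om$ to the weight $\widetilde\om(t)=\widehat\om(t)/(1-t)$ furnished by \eqref{eq:inomegatilde}, which is available precisely because $\om\in\DDD$; this is what converts the $L^1_{\om,[0,1)}$-quantity into something controlled by the $H(\infty,p)$-norm and by the finite quantity $\|\widetilde\om\|_{L^{p'}_{[0,1)}}$.

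For (i), since $|f_k(t)|\le M_\infty(t,f_k)$ for $t\in[0,1)$ and $M_\infty(\cdot,f_k)$ is nondecreasing, \eqref{eq:inomegatilde} gives
\begin{equation*}
\int_0^1 |f_k(t)|\om(t)\,dt\le |f_k(0)|\,\widehat\om(0)+\int_0^1 M_\infty(t,f_k)\widetilde\om(t)\,dt.
\end{equation*}
I would then split the last integral at a radius $r_0\in(0,1)$. On the tail $[r_0,1)$, Hölder's inequality and Lemma~\ref{HLP C LP} give
\begin{equation*}
\int_{r_0}^1 M_\infty(t,f_k)\widetilde\om(t)\,dt\le \|f_k\|_{H(\infty,p)}\left(\int_{r_0}^1\widetilde\om(t)^{p'}\,dt\right)^{1/p'}\lesssim \Big(\sup_k\|f_k\|_{X_p}\Big)\left(\int_{r_0}^1\widetilde\om(t)^{p'}\,dt\right)^{1/p'}.
\end{equation*}
Because $\|\widetilde\om\|_{L^{p'}_{[0,1)}}<\infty$, the last factor tends to $0$ as $r_0\to1$, so given $\e>0$ this tail is $<\e$ for a fixed $r_0$ close to $1$, \emph{uniformly} in $k$. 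On the head $[0,r_0]$, uniform convergence of $f_k$ to $0$ on $\{|z|\le r_0\}$ makes $\sup_{0\le t\le r_0}M_\infty(t,f_k)=M_\infty(r_0,f_k)\to0$, while $\int_0^{r_0}\widetilde\om<\infty$ and $|f_k(0)|\to0$; hence the head and the boundary term tend to $0$ as $k\to\infty$ for that fixed $r_0$. Letting $k\to\infty$ and then $\e\to0$ yields (i).

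For (ii), write $T(f_k)(z)=\int_0^1 g_k(t)K_t^\om(z)\om(t)\,dt$ with $g_k\in\{f_k,|f_k|\}$ and $K_t^\om(z)=\frac1z\int_0^z B_t^\om(\z)\,d\z$; by Lemma~\ref{gooddefinitionv2} this is a well-defined analytic function. Fixing $\rho\in(0,1)$, I would bound
\begin{equation*}
\sup_{|z|\le\rho}|T(f_k)(z)|\le \Big(\sup_{|z|\le\rho,\,t\in[0,1)}|K_t^\om(z)|\Big)\int_0^1|f_k(t)|\om(t)\,dt,
\end{equation*}
so it suffices to check that the kernel supremum is finite. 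From \eqref{eq:B} one has $K_t^\om(z)=\sum_{n\ge0}\frac{(tz)^n}{2(n+1)\om_{2n+1}}$, hence $|K_t^\om(z)|\le\sum_{n\ge0}\frac{\rho^n}{2(n+1)\om_{2n+1}}$ on $\{|z|\le\rho\}$. The point is that $\om\in\DD$ forces a polynomial lower bound $\om_{2n+1}\gtrsim(n+1)^{-\b}$ (via Lemma~\ref{caract. pesos doblantes}, using $\widehat\om(1-\tfrac1n)\gtrsim n^{-\b}\widehat\om(0)$ and $\om_{2n+1}\asymp\widehat\om(1-\tfrac1n)$), so this series converges and the supremum is a finite constant $C(\rho,\om)$. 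Combining with (i) gives $\sup_{|z|\le\rho}|T(f_k)(z)|\to0$, i.e. $T(f_k)\to0$ uniformly on compact subsets of $\D$.

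The main obstacle is the uniformity near the boundary in (i): one must make the tail small simultaneously for all $k$, and this is exactly where the two hypotheses enter, namely the uniform bound $\sup_k\|f_k\|_{X_p}<\infty$ (through Lemma~\ref{HLP C LP}) and the integrability $\widetilde\om\in L^{p'}_{[0,1)}$. The only other point needing care is the finiteness of the kernel supremum in (ii), which fails for general radial weights and genuinely uses $\om\in\DD$.
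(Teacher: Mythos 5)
Your proposal is correct and follows essentially the same route as the paper's proof: part (i) by passing from $\om$ to $\widetilde\om$ via \eqref{eq:intparts} (valid since $\om\in\DDD$), splitting at a radius $r_0$, and using H\"older together with $\widetilde\om\in L^{p'}_{[0,1)}$ on the tail and uniform convergence on the head; part (ii) by bounding $\sup_{|z|\le\rho,\,t\in[0,1)}|K^\om_t(z)|$ through the series $\sum_{n\ge0}\frac{\rho^n}{2(n+1)\om_{2n+1}}$ and invoking (i). The only cosmetic differences are that you justify the convergence of that series explicitly via Lemma~\ref{caract. pesos doblantes} (the paper simply asserts its finiteness) and you keep the correct exponent $1/p'$ in the H\"older step.
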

\begin{proof}
(i). Let $\varepsilon>0$. 
By hypothesis $\int_0^1 \widetilde{\om}(t)^{p'} dt < \infty$, so there exists $0<\rho_0<1$ such that $\int_{\rho_0}^1 \widetilde{\om}(t)^{p'} dt < \varepsilon $. Moreover,  there exists $k_0$ such that for every $k\ge k_0$ and $z\in M=\overline{D(0,\rho_0)}$, $|f_k(z)|<\varepsilon$. Then, by Lemma \ref{le:descriptionDDD}, \eqref{eq:intparts}, and Hölder inequality
\begin{align*}
\int_0^1|f_k(t)|\om(t)dt& \leq |f_k(0)|\omg (0) + \int_0^1 M_{\infty}(t,f_k)\widetilde{\om}(t)dt
\\&\lesssim
 \int_0^{\rho_0} M_{\infty}(t,f_k)\widetilde{\om}(t)dt + \int_{\rho_0}^1 M_{\infty}(t,f_k)\widetilde{\om}(t)dt
\\ &<\varepsilon \int_0^{\rho_0}\widetilde{\om}(t)dt + \sup_{k\in\N} \|f_k\|_{H(\infty,p)} \int_{\rho_0}^1 \widetilde{\om}(t)^{p'} dt
\\ &<\varepsilon \left(\int_0^{1}\widetilde{\om}(t)dt + \sup_{k\in\N} \|f_k\|_{H(\infty,p)}\right)=C\varepsilon ,
\end{align*}
where in the last step we have used Lemma \ref{HLP C LP}.

(ii). Let be $M\subset\D$ a compact set and $K_t^\omega(z)=\frac1z\int_0^z B_t^\om(u)\,du$. If $z\in M$ 
\begin{align*}
|T(f_k)(z)|
\le \int_0^1 |f_k(t)|\,|K_t^\omega(z)|\om(t)dt \le \sup_{\substack{z\in M\\ t\in[0,1)}} |K_t^\omega(z)|\int_0^1 |f_k(t)|\om(t)dt
\end{align*}
Since, $M\subset\overline{D(0,\rho_0)}$, for some $\rho_0\in(0,1)$, then
\begin{align*}
\sup_{\substack{z\in M\\ t\in[0,1)}} |K_t^\omega(z)|&=\sup_{\substack{z\in M\\ t\in[0,1)}} \left|\sum_{k=0}^\infty \frac{t^kz^k}{2(k+1)\om_{2k+1}} \right|\le \sum_{k=0}^\infty \frac{{\rho_0}^k}{2(k+1)\om_{2k+1}}= C(\omega,\rho_0)<\infty,
\end{align*}
so, by (i), 
$T(f_k)\to 0$ uniformly on $M$. This finishes the proof.
\end{proof}

\begin{theorem}\label{compactXp}
Let $\omega$ be a radial weight, $1<p<\infty$, $X_p, Y_p \in \{ H(\infty,p),D^p_{p-1}, H^p, HL(p)\}$ and let $T\in\{H_\omega, \widetilde{H_\omega}\}$. Then, the following assertions are equivalent:
\begin{enumerate}
\item[\rm(i)]
  $T: X_p \to Y_p$ is compact;
  \item[\rm(ii)] For every sequence $\{f_k\}_{k=0}^\infty\subset X_p $ such that $\sup\limits_{k\in\N} \|f_k\|_{X_p}<\infty$ and $f_k\to 0$ uniformly on compact subsets of $\D$,   $\lim\limits_{k\to\infty} \|T(f_k)\|_{Y_p} =0$.
  \end{enumerate}
\end{theorem}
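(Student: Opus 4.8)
The plan is to prove this as the standard sequential characterization of compactness for operators on spaces of analytic functions, which rests on two facts valid for each $X_p,Y_p\in\{H(\infty,p),D^p_{p-1},H^p,HL(p)\}$: (a) norm-bounded subsets are normal families, since point evaluations are bounded functionals on these spaces, so a norm bound yields a locally uniform bound on $\D$; and (b) convergence in $Y_p$ forces uniform convergence on compact subsets of $\D$. For (i)$\Rightarrow$(ii) I first record that a compact $T$ is bounded, so Theorem~\ref{bounded Xp} gives $\om\in\DDD$ and $M_{p,c}(\om)<\infty$, whence $\|\widetilde{\om}\|_{L^{p'}_{[0,1)}}<\infty$ by Lemma~\ref{reformulacionMp}; thus Lemma~\ref{Conv compact} is available. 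Given a bounded sequence $\{f_k\}\subset X_p$ with $f_k\to0$ uniformly on compacta, I argue by contradiction: if $\|T(f_k)\|_{Y_p}\not\to0$, pass to a subsequence with $\|T(f_{k_j})\|_{Y_p}\ge\delta>0$, then by compactness to a further subsequence with $T(f_{k_j})\to h$ in $Y_p$; fact (b) together with Lemma~\ref{Conv compact}(ii) forces $h=0$, contradicting $\|h\|_{Y_p}\ge\delta$.

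For (ii)$\Rightarrow$(i) I take a bounded sequence $\{g_k\}\subset X_p$, extract via fact (a) a subsequence $g_{k_j}\to g$ uniformly on compacta with $g\in\H(\D)$, and place $g\in X_p$ by a Fatou/lower-semicontinuity argument on the defining norm. Then $h_j=g_{k_j}-g$ is bounded in $X_p$ and tends to $0$ uniformly on compacta. If $T=\Ho$ is linear, then $T(g_{k_j})-T(g)=\Ho(h_j)$ and (ii) gives $\|\Ho(h_j)\|_{Y_p}\to0$, so $T(g_{k_j})\to T(g)$ in $Y_p$ and $T$ is compact. If $T=\widetilde{\Ho}$, the pointwise inequality $\big||g_{k_j}(t)|-|g(t)|\big|\le|h_j(t)|$, together with the nonnegativity of the Maclaurin coefficients of the kernel $\tfrac1z\int_0^z B^\om_t(u)\,du$, shows that the coefficients of $\widetilde{\Ho}(g_{k_j})-\widetilde{\Ho}(g)$ are dominated in modulus by the nonnegative coefficients of $\widetilde{\Ho}(h_j)$. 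Since coefficientwise domination by a nonnegative sequence controls both the $HL(p)$ and the $H(\infty,p)$ norms, this yields $\|\widetilde{\Ho}(g_{k_j})-\widetilde{\Ho}(g)\|_{Z_p}\le\|\widetilde{\Ho}(h_j)\|_{Z_p}$ for $Z_p\in\{HL(p),H(\infty,p)\}$, and the right-hand side tends to $0$ by (ii); hence $T(g_{k_j})\to T(g)$ in $Z_p$.

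The delicate point, and the main obstacle, is the remaining targets $Y_p\in\{H^p,D^p_{p-1}\}$ for the sublinear operator, since coefficientwise domination does not control the $H^p$- or $D^p_{p-1}$-norm when $p<2$ (phases matter). Here I would exploit that, by Lemma~\ref{le:Htildeequiv} and $\om\in\DD$, the four norms are mutually equivalent on the range of $\widetilde{\Ho}$, so that (ii) for one target is equivalent to (ii) for every target and $\|\widetilde{\Ho}(h_j)\|_{HL(p)}\asymp\|\widetilde{\Ho}(h_j)\|_{Y_p}\to0$. To pass this control to the difference $\widetilde{\Ho}(g_{k_j})-\widetilde{\Ho}(g)$, which leaves the range, I would reprove the First Step inequality in the proof of Theorem~\ref{bounded Xp} in a quantitative form: the finitely many low-order Maclaurin coefficients of $\widetilde{\Ho}(g_{k_j})$ converge to those of $\widetilde{\Ho}(g)$ because $\int_0^1 g_{k_j}(t)t^n\om(t)\,dt\to\int_0^1 g(t)t^n\om(t)\,dt$ for each fixed $n$ (the moment argument of Lemma~\ref{Conv compact}(i)), while the high-frequency part is estimated, through Lemma~\ref{le:Hpmixto} and the block decomposition underlying \eqref{eq:mpr8}--\eqref{eq:mpr10}, by $\|\widetilde{\Ho}(h_j)\|_{HL(p)}\to0$. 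Combining the vanishing of the low-order part with the smallness of the high-frequency part gives $\|\widetilde{\Ho}(g_{k_j})-\widetilde{\Ho}(g)\|_{Y_p}\to0$, completing compactness for all targets. I expect essentially all the difficulty to concentrate in this transfer for the sublinear operator into $H^p$ and $D^p_{p-1}$, where the absence of a clean triangle inequality forces one to blend the coefficient comparison with the harmonic-analytic decomposition of Theorem~\ref{bounded Xp}.
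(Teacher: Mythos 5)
Your proposal is correct and, in both directions, follows the same skeleton as the paper's proof: for (i)$\Rightarrow$(ii), the contradiction argument combining Theorem~\ref{bounded Xp} (compactness implies boundedness, hence $\om\in\DDD$ and $M_{p,c}(\om)<\infty$), Lemma~\ref{Conv compact}, and the fact that $Y_p$-convergence implies locally uniform convergence; for (ii)$\Rightarrow$(i), the Montel extraction applied to $g_{n_k}=f_{n_k}-f$. Where you genuinely go beyond the paper is the sublinear case of (ii)$\Rightarrow$(i). The paper's proof ends with ``by hypothesis $\lim_{k\to\infty}\|T(g_{n_k})\|_{Y_p}=0$, that is, $T$ is compact'', which is immediate only for the linear operator $\Ho$, since $\widetilde{\Ho}(f_{n_k})-\widetilde{\Ho}(f)\neq\widetilde{\Ho}(g_{n_k})$ in general; you correctly isolate this issue, and your repair works. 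Indeed, $\widetilde{\Ho}(f_{n_k})-\widetilde{\Ho}(f)$ is the kernel operator applied to $u_k=|f_{n_k}|-|f|$ on $[0,1)$, with $|u_k|\le|g_{n_k}|$, so coefficientwise domination disposes of $Y_p\in\{HL(p),H(\infty,p)\}$ (and of every target when $p\ge2$, by \eqref{eq:HpDpp>2}), while for $Y_p\in\{H^p,D^p_{p-1}\}$ and $1<p<2$ the block estimate preceding \eqref{eq:mpr9}, which involves only the modulus of the integrated function, bounds the high-frequency part of the $H^1\left(p,q,p\left(1-\frac1q\right)\right)$-norm by $\|\widetilde{\Ho}(g_{n_k})\|_{HL(p)}\to0$, and the finitely many low-order coefficients tend to $0$ by the moment argument of Lemma~\ref{Conv compact}(i), exactly as you outline. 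Two details you should make explicit to close your argument: first, before invoking Lemma~\ref{le:Htildeequiv} or the block estimates of Theorem~\ref{bounded Xp} in this direction (both require $\om\in\DD$), note that (ii) itself forces $T$ to be bounded --- if $\|f_n\|_{X_p}\le1$ and $c_n=\|T(f_n)\|_{Y_p}\to\infty$, then $h_n=f_n/\sqrt{c_n}$ is norm bounded and tends to $0$ uniformly on compacta, yet $\|T(h_n)\|_{Y_p}=\sqrt{c_n}\to\infty$, contradicting (ii) --- so that Theorem~\ref{bounded Xp} applies and yields $\om\in\DDD$; second, your Fatou/lower-semicontinuity step placing the Montel limit in $X_p$ (needed so that $h_j$ is norm bounded) is indeed required, and is another point the published proof passes over in silence.
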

\begin{proof}
(i)$\Rightarrow$(ii). Let $\{f_n\}_{n=0}^\infty\subset X_p$ such that $\sup\limits_{n\in\N} \|f_n\|_{X_p}<\infty$ and $f_n\to 0$ uniformly on compact subsets of $\D$.
Assume there exist $\e>0$ and a subsequence $\{n_k\}_k \subset \N$  such that 
\begin{equation}
\label{c}
\|T(f_{n_k})\|_{Y_p} >\e, \quad\text{for any $k$}.\end{equation}
Since $T$ is compact, there exists a subsequence $\{n_{k_j}\}_j\subset \N$ and $g \in Y_p$ such that $\lim\limits_{j\to\infty} \|T(f_{n_{k_j}})-g\|_{Y_p}=0$. Moreover, Theorem \ref{bounded Xp} ensures that $\om \in \DDD$ and $M_{p,c}(\om)<\infty$, so $\| \widetilde{\om}\|_{L^{p'}_{[0,1)}}<\infty$. Therefore Lemma~\ref{Conv compact}, implies that $T(f_{n_{k_j}})\to 0$ uniformly on compact subsets of $\D$, so $\lim\limits_{j\to\infty} \|T(f_{n_{k_j}})\|_{Y_p}=0$ which yields a contradiction with \eqref{c}.

(ii)$\Rightarrow$(i). Let $\{f_n\}\subset X_p$ such that $\sup\limits_{n\in \N}\|f_n\|_{X_p}<\infty.$ 
Then, 
$\{f_n\}$ 
is uniformly bounded on compact subsets of $\D$. Then, by Montel's Theorem there exists $\{f_{n_k}\}_k$ and $f\in \H(\D)$ such that $f_{n_k}\to f$ uniformly on compact subsets of $\D$. 
Let  $g_{n_k}=f_{n_k}-f$, then $g_{n_k}\to 0$ uniformly on compact subsets of $\D$ and $\sup\limits_{k\in \N}\|g_{n_k}\|_{X_p}<\infty$.  Therefore, by hypothesis $\lim\limits_{k\to\infty} \|T(g_{n_k})\|_{Y_p} =0$, that is,   $T$ is compact.

\end{proof}

\begin{theorem}\label{pr:nocompactxp}
Let $\omega$ be a radial weight, $1 < p<\infty$,  $X_p,Y_p\in \{ H(\infty,p),D^p_{p-1}, H^p, HL(p)\}$, and let $T\in \{\Ho, \widetilde{\Ho}\}$. Then, 
$T:X_p\to Y_p$ is not compact.
\end{theorem}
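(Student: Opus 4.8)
The plan is to reduce matters to the bounded case and then exhibit a bounded sequence in $X_p$, converging to $0$ uniformly on compacta, that $T$ does not send to $0$ in $Y_p$; this contradicts the compactness criterion of Theorem~\ref{compactXp}. Recall first that a compact operator sends the unit ball of $X_p$ to a relatively compact, hence bounded, subset of $Y_p$, so boundedness is necessary for compactness. Thus if $T\colon X_p\to Y_p$ is not bounded there is nothing to prove, and we may assume $T$ bounded. By Theorem~\ref{bounded Xp} this forces $\om\in\DDD$ and $M_{p,c}(\om)<\infty$, equivalently $M_p(\om)<\infty$ together with $\|\widetilde{\om}\|_{L^{p'}_{[0,1)}}<\infty$; in particular the hypotheses of Theorem~\ref{compactXp} hold, and it suffices to produce $\{f_n\}\subset X_p$ with $\sup_n\|f_n\|_{X_p}<\infty$, $f_n\to0$ uniformly on compact subsets of $\D$, and $\liminf_n\|T(f_n)\|_{Y_p}>0$.

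For the construction I would recycle the test polynomials of \eqref{eq:testw}: for large $n$ set $N=2^n$, $M=2^{n+2}+1$ (so that $4N+1\le M$) and
$$
f_n(z)=\sum_{k=N}^{M}\om_{2k+1}^{p'-1}(k+1)^{p'-2}z^k .
$$
Since the frequencies satisfy $k\asymp 2^n$, Lemma~\ref{caract. pesos doblantes} and \eqref{eq:omegaIn} give $\om_{2k+1}\asymp\om_{2^n}$ throughout the sum. Hence by Lemma~\ref{lema f test Mp} and Lemma~\ref{HLP C LP}, together with the elementary lower bound $\|f_n\|_{H(\infty,p)}^p\ge\int_{1-2^{-n}}^{1}f_n(r)^p\,dr$ (valid because the coefficients are nonnegative, whence $M_\infty(r,f_n)=f_n(r)$), all four norms coincide:
$$
\|f_n\|_{X_p}^p\asymp \om_{2^n}^{p'}\,(2^n)^{p'-1},\qquad X_p\in\{H(\infty,p),H^p,D^p_{p-1},HL(p)\}.
$$
Moreover, for $|z|\le\rho<1$ the spectral gap $k\ge 2^n$ yields $|f_n(z)|\le\rho^{2^n}\sum_{k=N}^{M}\om_{2k+1}^{p'-1}(k+1)^{p'-2}\asymp\rho^{2^n}\om_{2^n}^{p'-1}(2^n)^{p'-1}$; dividing by $\|f_n\|_{X_p}$ and using the identity $p'-1=p'/p$ (which cancels the powers of $\om_{2^n}$) gives $|f_n(z)|/\|f_n\|_{X_p}\lesssim\rho^{2^n}(2^n)^{1/p}\to0$, so the normalised functions tend to $0$ uniformly on compacta.

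The crux is the lower bound for $\|T(f_n)\|_{Y_p}$. As $f_n$ has nonnegative coefficients, $f_n\ge0$ on $[0,1)$, so $T(f_n)=\Ho(f_n)$ for both $T\in\{\Ho,\widetilde{\Ho}\}$; since $\om\in\DD$, Lemma~\ref{le:Htildeequiv} yields $\|T(f_n)\|_{Y_p}\asymp\|\Ho(f_n)\|_{HL(p)}$. Writing $\widehat{\Ho(f_n)}(m)=\tfrac{1}{2(m+1)\om_{2m+1}}\sum_{k=N}^{M}a_k\,\om_{m+k}$ with $a_k=\om_{2k+1}^{p'-1}(k+1)^{p'-2}$, I would discard all output frequencies except those in the block $m\in I(n-1)$. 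There $m+k\asymp 2^n$ uniformly in $k$, so $\om_{m+k}\asymp\om_{2^n}$ by Lemma~\ref{caract. pesos doblantes}, and the Hankel-type double sum decouples into the product $\bigl(\sum_{m\in I(n-1)}(m+1)^{-2}\om_{2m+1}^{-p}\bigr)\bigl(\sum_{k=N}^{M}a_k\bigr)^{p}$. A direct computation then gives $\|\Ho(f_n)\|_{HL(p)}^p\gtrsim\om_{2^n}^{p'}(2^n)^{p'-1}\asymp\|f_n\|_{X_p}^p$, so $\|T(f_n)\|_{Y_p}\gtrsim\|f_n\|_{X_p}$. Consequently $f_n/\|f_n\|_{X_p}$ satisfies every hypothesis of Theorem~\ref{compactXp}(ii) while violating its conclusion, so $T$ is not compact.

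I expect the single delicate point to be the lower estimate of the previous paragraph: one must place the output block exactly one octave below the input (here $I(n-1)$) so that $\om_{m+k}$ is comparable to the single value $\om_{2^n}$ across the whole range of $k$, which is precisely what decouples the double sum and reproduces $\|f_n\|_{X_p}^p$. The remaining ingredients—the comparability of the four $X_p$-norms on these block polynomials and the uniform decay on compacta—are routine consequences of $\om\in\DD$ through \eqref{eq:omegaIn}, Lemma~\ref{lema f test Mp}, Lemma~\ref{HLP C LP}, and the spectral gap $k\ge 2^n$.
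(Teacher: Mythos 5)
Your proposal is correct, and it follows the paper's overall skeleton (compactness $\Rightarrow$ boundedness $\Rightarrow$ $\om\in\DDD$ via Theorem~\ref{bounded Xp}; then violate the sequential criterion of Theorem~\ref{compactXp}; use Lemma~\ref{le:Htildeequiv} to pass to the $HL(p)$-norm), but the heart of the argument --- the test functions and the lower bound --- is genuinely different. The paper uses the continuous family $f_a(z)=\bigl((1-a^2)/(1-az)^2\bigr)^{1/p}$, whose four norms are $\asymp 1$ essentially for free, and its lower bound $\|\Ho(f_a)\|_{HL(p)}\gtrsim 1$ needs nothing beyond the monotonicity of the moments, namely $\om_{k+n}\ge\om_{2n}$ for $k\le n$ (doubling enters only through Lemma~\ref{le:Htildeequiv}). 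You instead recycle the weight-adapted block polynomials \eqref{eq:testw} from the necessity part of Theorem~\ref{bounded Xp}, which makes the construction self-contained within that machinery (Lemma~\ref{lema f test Mp}, Lemma~\ref{HLP C LP}) but costs more work: you must verify the size of all four norms, the decay on compacta (your exponent bookkeeping $p'-1-p'/p=0$ and $(p'-1)(1-1/p)=1/p$ is correct), and a decoupling of the double sum that genuinely uses $\om\in\DD$. Your computations check out, with one slip worth fixing: the displayed ``decoupled product'' $\bigl(\sum_{m\in I(n-1)}(m+1)^{-2}\om_{2m+1}^{-p}\bigr)\bigl(\sum_k a_k\bigr)^p$ is missing the factor $\om_{2^n}^{p}$ coming from $\om_{m+k}^p\asymp\om_{2^n}^p$; as written it evaluates to $\om_{2^n}^{p'-p}(2^n)^{p'-1}$, not the claimed $\om_{2^n}^{p'}(2^n)^{p'-1}$. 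The error is only typographical, since $\om_{2m+1}\asymp\om_{2^n}$ on the block, so the correct intermediate expression is $\bigl(\sum_{m\in I(n-1)}(m+1)^{-2}(\om_{2^n}/\om_{2m+1})^{p}\bigr)\bigl(\sum_k a_k\bigr)^p\asymp 2^{-n}\bigl(\sum_k a_k\bigr)^p$, which does give your stated conclusion. One further cosmetic point: Lemma~\ref{lema f test Mp} is stated with coefficients $\om_{2k}^{\a}$ while you (like the paper in \eqref{eq:testw}) use $\om_{2k+1}^{p'-1}$; this is harmless under $\DD$ by \eqref{eq:omegaIn}, but deserves the same parenthetical the paper implicitly relies on. Finally, your explicit reduction ``not bounded $\Rightarrow$ not compact'' is a clean way to organize what the paper does implicitly, and note that your one-octave-below placement $I(n-1)$ is convenient but not essential --- any block $I(j)$ with $j\le n$ keeps $m+k\asymp 2^n$ and decouples the sum equally well.
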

\begin{proof}
Assume that $T:X_p\to Y_p$ is compact. 
For each $0<a<1$, set
\begin{align*}
f_a(z)&=\left (\frac{1-a^2}{(1-az)^2}\right )
^{1/p}=\sum_{n=0}^\infty \widehat{f_a}(n)z^n,\quad z\in
\mathbb D,
\end{align*}
where $\widehat{f_a}(n)=(1-a^2)^{1/p}\frac{\Gamma(n+2/p)}{n!\Gamma(2/p)} a^n \ge 0$. So, by Stirling's formula
\begin{equation}\label{CoeffCompact}
\widehat{f_a}(n)\asymp (1-a^2)^{1/p}(n+1)^{2/p -1}a^n,\quad n\in \N\cup\{0\}.
\end{equation}
Consequently, $\Vert f_a\Vert_{HL(p)}\asymp 1,\, a\in(0,1)$. Moreover, 
 $\Vert f_a\Vert_{H(\infty,p)}\asymp\Vert f_a\Vert_{D^p_{p-1}}\asymp\Vert f_a\Vert_{H^p}=1.$ Furthermore, 
 it is clear that $f_a\to 0$, as $a\to1$ uniformly on compact subsets of $\D$, and $H_\omega(f_a)=\widetilde{H_\omega}(f_a)$. 
Since $T:X_p\to Y_p$ is compact, $\omega\in\DD$  by Theorem~\ref{bounded Xp}.
So,
Lemma~\ref{le:Htildeequiv} implies that 
$$\|T(f_a)\|_{Y_p}\asymp\|T(f_a)\|_{\hlp}, \quad a\in (0,1).$$

 Therefore, by using \eqref{CoeffCompact} we have
\begin{equation}\begin{split}\label{eq:faco}
\|\Ho(f_a)\|^p_{Y_p}&\gtrsim\|\Ho(f_a)\|^p_{\hlp}=\sum_{n=0}^\infty (n+1)^{p-2}\left(\frac{\sum\limits_{k=0}^\infty\widehat{f_a}(k)\omega_{k+n}}{2(n+1)\om_{2n+1}} \right)^p
\\ &\gtrsim (1-a^2)\sum_{n=0}^\infty \frac1{(n+1)^{2}}\left( \sum\limits_{k=0}^\infty (k+1)^{2/p-1}a^k\,\frac{\om_{k+n}}{\om_{2n}}\right)^p
\\ &\ge (1-a)\sum_{n=0}^\infty \frac1{(n+1)^{2}}\left( \sum\limits_{k=0}^n (k+1)^{2/p-1}a^k\,\frac{\om_{k+n}}{\om_{2n}}\right)^p
\\ &\ge (1-a)\sum_{n=0}^\infty \frac1{(n+1)^{2}}\left( \sum\limits_{k=0}^n (k+1)^{2/p-1}a^k\right)^p
\\ &\ge (1-a)\sum_{n=0}^\infty \frac{a^{pn}}{(n+1)^{2}}\left( \sum\limits_{k=0}^n (k+1)^{2/p-1}\right)^p
\\ &\gtrsim (1-a)\sum_{n=0}^\infty a^{pn}=\frac{1-a}{1-a^p}\gtrsim 1,
\end{split}\end{equation}
so  using  Theorem~\ref{compactXp} we deduce  that $T: X_p\to Y_p$ is not a compact operator.
\end{proof}

\section{Hilbert type operators acting on $X_1$-spaces}\label{sec:p=1}

The first result of this section gives the equivalence of conditions (iii)-(vi) of Theorem~\ref{bounded X_1}.

\begin{lemma}\label{le:m1}
\label{reformulacion M1}
Let  $\om \in \DD$. Then, the following conditions are equivalent:
\begin{enumerate}
\item[\rm(i)] $K_{1,c}(\om)=\sup\limits_{a \in [0,1)}
        \frac{1}{1-a}\int_a^1 \om(t)\left(1+\int_0^t \frac{ds}{\omg(s)}\right)\,dt<\infty;$
      \item[\rm(ii)] $K_{1,d}(\om)= \sup\limits_{a \in [0,1)} \frac{\omg(a)}{1-a} \left(1+\int_0^a \frac{ds}{\omg(s)}\right)<\infty;$
      \item[\rm(iii)] $ M_1(\omega)=\sup \limits_{N \in \N} (N+1)\om_{2N}\sum\limits_{k=0}^{N}\frac{1}{(k+1)^2 \om_{2k}}<\infty$.
\end{enumerate}
Moreover,
\begin{equation}\label{m1equiv}
K_{1,c}(\om)\asymp K_{1,d}(\om)\asymp  M_1(\omega),
\end{equation}
and $\omega\in\Dd$ when
  $\omega$ satisfies any of the three previous conditions.
 \end{lemma}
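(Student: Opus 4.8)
The plan is to prove the cycle (i)$\Leftrightarrow$(ii)$\Leftrightarrow$(iii) and read off the quantitative comparisons \eqref{m1equiv} as we go, keeping the $\Dd$-conclusion for last. Throughout write $g(t)=1+\int_0^t\frac{ds}{\omg(s)}$, which is finite and nondecreasing on $[0,1)$ with $g'(t)=1/\omg(t)$.

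\textbf{Step 1: (i)$\Leftrightarrow$(ii) and $K_{1,c}(\om)\asymp K_{1,d}(\om)$.} The inequality $K_{1,d}(\om)\le K_{1,c}(\om)$ is immediate from monotonicity of $g$: since $\int_a^1\om(t)g(t)\,dt\ge g(a)\int_a^1\om(t)\,dt=\omg(a)g(a)$, dividing by $1-a$ and taking the supremum gives it, so (i)$\Rightarrow$(ii). For the converse I would integrate by parts (using $v=-\omg$, $v'=\om$): for $a\le b<1$,
\[\int_a^b\om(t)g(t)\,dt=\omg(a)g(a)-\omg(b)g(b)+(b-a).\]
Assuming (ii), the estimate $\omg(b)g(b)\le K_{1,d}(\om)(1-b)\to0$ as $b\to1^-$ annihilates the boundary term, so $\int_a^1\om(t)g(t)\,dt=\omg(a)g(a)+(1-a)$ and hence $K_{1,c}(\om)=K_{1,d}(\om)+1$. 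This proves (ii)$\Rightarrow$(i) and the comparison. Note this step uses neither $\om\in\DD$ nor $\Dd$.

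\textbf{Step 2: (ii)$\Leftrightarrow$(iii) and $K_{1,d}(\om)\asymp M_1(\om)$.} This is the discretization, performed exactly as in Lemma~\ref{reformulacionMp}, now using $\om\in\DD$. Since the contribution of $a\in[0,\tfrac12]$ (resp. small $N$) to either supremum is a bounded constant, I only compare behaviour as $a\to1^-$. Given $a\in[\tfrac12,1)$, pick $N\ge1$ with $1-\tfrac1N\le a<1-\tfrac1{N+1}$, so $\tfrac1{1-a}\asymp N+1$. Lemma~\ref{caract. pesos doblantes}(iii),(v) give $\omg(a)\asymp\omg(1-\tfrac1N)\asymp\om_N\asymp\om_{2N}$, and splitting $[0,a]$ along the points $1-\tfrac1k$, on which $\omg\asymp\om_{2k}$ (intervals of length $\asymp(k+1)^{-2}$), yields $1+\int_0^a\frac{ds}{\omg(s)}\asymp\sum_{k=0}^N\frac{1}{(k+1)^2\om_{2k}}$. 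Multiplying the three comparisons and taking suprema gives $K_{1,d}(\om)\asymp M_1(\om)$, completing (ii)$\Leftrightarrow$(iii) and \eqref{m1equiv}.

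\textbf{Step 3: any condition forces $\om\in\Dd$ (the main obstacle).} I would argue from (iii). Using $\om\in\DD$ and \eqref{eq:omegaIn} to pass to dyadic blocks, $\sum_{k=0}^N\frac{1}{(k+1)^2\om_{2k}}\asymp\sum_{n\le\log_2N}\frac{1}{2^n\om_{2^n}}$ while $(N+1)\om_{2N}\asymp2^n\om_{2^n}$, so with $d_n=2^n\om_{2^n}$ and $S_n=\sum_{m=0}^n d_m^{-1}$ the condition $M_1(\om)<\infty$ becomes the discrete Hardy-type bound $\sup_n d_nS_n<\infty$. Since $S_n\ge d_n^{-1}$ always, this forces $d_n\asymp S_n^{-1}$ with $S_n$ nondecreasing, whence $d_{n+j}\lesssim S_{n+j}^{-1}\le S_n^{-1}\asymp d_n$ and therefore $\om_{2^{n+j}}\le C2^{-j}\om_{2^n}$; choosing the step $j$ so large that $C2^{-j}$ beats the doubling constant of $\om$, and interpolating over the intervening indices via $\om\in\DD$, gives $\om_x\ge C_0\,\om_{Kx}$ with $K=2^j$, i.e. $\om\in\M$. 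As $\om\in\DD$ by hypothesis and $\DDD=\DD\cap\M=\DD\cap\Dd$ by \cite[Theorem~3]{PR19}, we conclude $\om\in\DDD\subset\Dd$. The delicate point here (paralleling the third step of Proposition~\ref{cond nec Xp}) is precisely that a pointwise decay estimate on $\om_{2N}$ is insufficient: one must extract the \emph{geometric ratio}, which is why I route the argument through the sequence $\{d_n\}$ and the summability of $S_n^{-1}$ rather than estimating $\omg$ directly.
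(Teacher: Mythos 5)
Your proof is correct, but it diverges from the paper's at two points, in opposite directions, so the comparison is worth recording. Your Step 2 is exactly the paper's: the paper also obtains (ii)$\Leftrightarrow$(iii) ``by mimicking the proof of Lemma~\ref{reformulacionMp}'', via Lemma~\ref{caract. pesos doblantes}. For (ii)$\Rightarrow$(i), however, the paper bounds $1+\int_0^t\frac{ds}{\omg(s)}$ pointwise by $K_{1,d}(\om)\frac{1-t}{\omg(t)}$ and then invokes \cite[Lemma~3(ii)]{PPR20}, which requires $\om\in\DD$; your integration by parts instead yields the exact identity $K_{1,c}(\om)=K_{1,d}(\om)+1$ once the boundary term $\omg(b)\left(1+\int_0^b\frac{ds}{\omg(s)}\right)\le K_{1,d}(\om)(1-b)$ is annihilated as $b\to1^-$. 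This is more elementary, gives a sharper constant, and shows that the equivalence (i)$\Leftrightarrow$(ii) holds for \emph{every} radial weight, with no doubling hypothesis. For the final assertion $\om\in\Dd$ the roles reverse: the paper tests (ii) at $a=1-\frac{1-r}{K}$ and uses only the monotonicity of $\omg$ to get $\omg(r)\ge\frac{K-1}{M_{1,d}(\om)}\,\omg\left(1-\frac{1-r}{K}\right)$, so choosing $K>M_{1,d}(\om)+1$ verifies the definition of $\Dd$ directly, in two lines, without $\om\in\DD$ and without external results. You instead route through (iii): your discrete Hardy-type argument ($d_{n+j}\le M S_{n+j}^{-1}\le M S_n^{-1}\le M d_n$, hence $\om_{2^{n+j}}\le M2^{-j}\om_{2^n}$) is sound and correctly yields $\om\in\M$ (the passage from dyadic $x=2^n$ to general $x$ needs only monotonicity of the moments, at the cost of doubling $K$), paralleling the third step of Proposition~\ref{cond nec Xp}; but you must then quote the nontrivial structural theorem $\DDD=\DD\cap\M=\DD\cap\Dd$ of \cite[Theorem~3]{PR19}, and the hypothesis $\om\in\DD$ becomes essential to that step. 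In short, your route buys a cleaner, hypothesis-free proof of (i)$\Leftrightarrow$(ii), while the paper's buys a self-contained and much lighter proof that the conditions force $\om\in\Dd$.
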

 Observe that for any radial weight, $K_{1,c}(\om)<\infty$ holds if and only if $M_{1,c}(\om)<\infty$, and analogously $K_{1,d}(\om)<\infty$ if and only if $M_{1,d}(\om)<\infty$. This fact will be used repeatedly throughout the paper.
\begin{proof}
On the one hand, 
\begin{align*}
\frac{\omg(a)}{1-a} \left(1+\int_0^a \frac{ds}{\omg(s)}\right)& \leq \frac{1}{1-a} \int_a^1 \om(t)\left(1+\int_0^t \frac{ds}{\omg(s)}\right)\,dt \\
&\leq K_{1,c}(\om),
  \quad a\in [0,1),
\end{align*}
so (i)$\Rightarrow$(ii) and $K_{1,d}(\om)\lesssim K_{1,c}(\om)$ .

On the other hand assume that (ii) holds.  Since $\om \in \DD$, \cite[Lemma~3(ii)]{PPR20} (for $\nu(t)=1$) yields
\begin{align*}
 \frac{1}{1-a}\int_a^1 \om(t)\left(1+\int_0^t \frac{ds}{\omg(s)}\right)\,dt &\leq 
 K_{1,d}(\om) \left( \frac{1}{1-a}\int_a^1\frac{ \om(t) (1-t)}{\omg(t)} dt\right)
 \\ &\lesssim  K_{1,d}(\om),\quad 0<a<1,
\end{align*}
that is (i) holds and  $K_{1,c}(\om)\lesssim K_{1,d}(\om)$ .  Finally, by mimicking the proof of Lemma \ref{reformulacionMp},
\begin{align*}
 K_{1,d}(\om)\asymp  M_1(\omega),
\end{align*}
so (ii)$\Leftrightarrow$(iii) and \eqref{m1equiv} holds.

Next, 
for any $K>1$ and $r\in (0,1)$ 
\begin{equation*}\begin{split}
M_{1,d}(\om) &\ge 
\frac{K\omg\left(1-\frac{1-r}{K}\right)}{1-r}
\int_r^{1-\frac{1-r}{K}} \frac{ds}{\omg(s)}
\\ &\ge 
(K-1)\frac{\omg\left(1-\frac{1-r}{K}\right)}{\omg\left( r\right)},
\end{split}\end{equation*}
that is 
$$\omg\left( r\right)\ge \frac{K-1}{M_{1,d}(\om)}\omg\left(1-\frac{1-r}{K}\right),\quad 0<r<1,$$
so taking $K>M_{1,d}(\om)+1$, we get $\omega\in\Dd$. This finishes the proof.

\end{proof}

The following result will be used to prove the  equivalence (ii)$\Leftrightarrow$(iii) of Theorem~\ref{bounded X_1}.

\begin{proposition}
\label{1-medidas de Carleson para HL(1)}
Let $\mu$ be a finite positive Borel measure  on $[0,1)$ and $X_1\in\{H(\infty,1), HL(1)\}$. Then $\mu$ is a $1$-Carleson measure for $X_1$ 
if and only if $\mu$ is a classical Carleson measure. Moreover,
$$\| I_d\|_{X_1\to L^1(\mu)}\asymp \sup_{a\in [0,1)}\frac{\mu([a,1))}{1-a}.$$

\end{proposition}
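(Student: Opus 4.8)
The plan is to reduce the statement to the norm equivalence
$$\|I_d\|_{X_1\to L^1(\mu)}\asymp \sup_{a\in[0,1)}\frac{\mu([a,1))}{1-a},$$
and to recall that, for a measure supported on $[0,1)$, the quantity on the right is exactly the classical Carleson measure constant. Indeed, since every point of $[0,1)$ lies on the radius at angle $0$, a Carleson box $S(I)$ meets $[0,1)$ only when its arc $I$ contains the point $1$, and in that case $S(I)\cap[0,1)=[1-|I|,1)$; thus $\mu(S(I))\lesssim|I|$ for all $I$ is equivalent to $\mu([a,1))\lesssim 1-a$ for all $a\in[0,1)$ (see \cite[Chapter~9]{Duren}). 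It therefore suffices to prove the two inequalities hidden in the displayed equivalence.

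For the lower bound (necessity) I would test with the normalized functions
$$g_a(z)=\frac{1-a}{(1-az)^2}=\sum_{n=0}^\infty (1-a)(n+1)a^n z^n,\quad 0\le a<1.$$
A direct computation gives $\|g_a\|_{HL(1)}=(1-a)\sum_{n=0}^\infty a^n=1$ and, since $M_\infty(r,g_a)=\frac{1-a}{(1-ar)^2}$, also $\|g_a\|_{H(\infty,1)}=\int_0^1\frac{(1-a)\,dr}{(1-ar)^2}=1$, so $\|g_a\|_{X_1}\asymp 1$ for both choices of $X_1$. As $t\mapsto 1-at$ is decreasing, $g_a$ is nonnegative and increasing on $[0,1)$, whence $g_a(t)\ge g_a(a)=\frac{1}{(1-a)(1+a)^2}\ge\frac{1}{4(1-a)}$ for $t\in[a,1)$. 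Consequently $\int_0^1 g_a\,d\mu\ge \frac{1}{4(1-a)}\,\mu([a,1))$, and boundedness of $I_d$ forces $\frac{\mu([a,1))}{1-a}\le 4\|I_d\|_{X_1\to L^1(\mu)}$ for every $a$; taking the supremum yields the lower bound.

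For the upper bound (sufficiency) set $C_\mu=\sup_a\frac{\mu([a,1))}{1-a}$. Because $X_1\subset H(\infty,1)$ with $\|\cdot\|_{H(\infty,1)}\lesssim\|\cdot\|_{X_1}$ by Lemma~\ref{HLP C LP}, it is enough to bound $\int_0^1|f|\,d\mu$ by $C_\mu\|f\|_{H(\infty,1)}$. Writing $M(t)=M_\infty(t,f)$, which is nonnegative, nondecreasing and dominates $|f(t)|$, I would use $M(t)=M(0)+\int_0^t dM(s)$ together with Fubini to get
$$\int_0^1 M\,d\mu=M(0)\,\mu([0,1))+\int_0^1\mu([s,1))\,dM(s)\le C_\mu\Big(M(0)+\int_0^1(1-s)\,dM(s)\Big).$$
Integrating by parts, $\int_0^1(1-s)\,dM(s)=\lim_{s\to1}(1-s)M(s)-M(0)+\int_0^1 M(s)\,ds$, and the boundary limit vanishes since $\int_0^1 M<\infty$ with $M$ nondecreasing gives $(1-s)M(s)\le\int_s^1 M\to0$; together with $M(0)\le\int_0^1 M$ this yields $\int_0^1|f|\,d\mu\le 2C_\mu\int_0^1 M_\infty(s,f)\,ds=2C_\mu\|f\|_{H(\infty,1)}$.

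The main obstacle is this sufficiency step: one must convert the geometric condition $\mu([s,1))\lesssim 1-s$ into a bound on $\int M\,d\mu$ for the nondecreasing majorant $M_\infty(\cdot,f)$, and the delicate point is justifying that the boundary term $(1-s)M_\infty(s,f)$ tends to $0$, which is precisely where membership $f\in H(\infty,1)$ is used. Once the norm equivalence is established, combining it with the classical Carleson description recorded in the first paragraph finishes the proof.
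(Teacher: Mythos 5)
Your proof is correct, and its sufficiency half is essentially the paper's: the paper also bounds $\int_0^1|f|\,d\mu\le\int_0^1 M_\infty(t,f)\,d\mu(t)\lesssim \|f\|_{H(\infty,1)}\sup_{a}\frac{\mu([a,1))}{1-a}$ by ``two integration by parts'' (your Fubini--Stieltjes step plus one integration by parts is exactly that, and your justification that the boundary term $(1-s)M_\infty(s,f)\to0$ vanishes is a welcome detail the paper leaves implicit), and then invokes Lemma~\ref{HLP C LP} just as you do. Where you genuinely diverge is the necessity direction: the paper observes that, by \eqref{fejer} and \eqref{eq:HpDpp<2}, boundedness of $I_d:X_1\to L^1(\mu)$ forces $\mu$ to be a $1$-Carleson measure for $H^1$, and then quotes Carleson's embedding theorem \cite[Theorem~9.3]{Duren} \emph{and its proof} to obtain both the classical Carleson property and the lower bound $\sup_a\mu([a,1))/(1-a)\lesssim\|I_d\|_{X_1\to L^1(\mu)}$. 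You instead test directly with $g_a(z)=(1-a)/(1-az)^2$, whose $HL(1)$- and $H(\infty,1)$-norms you compute exactly, and use monotonicity of $g_a$ on $[0,1)$ to get $\mu([a,1))\le 4(1-a)\|I_d\|_{X_1\to L^1(\mu)}$. Your route is more self-contained: it avoids the detour through $H^1$ and the appeal to the proof (not merely the statement) of Carleson's theorem, it yields an explicit constant, and it makes explicit the geometric reduction, left implicit in the paper, that for a measure supported on $[0,1)$ the classical Carleson condition is just the radial condition $\mu([a,1))\lesssim 1-a$. The paper's route is shorter on the page and reuses standard machinery; note also that your test functions are essentially the family $f_a(z)=(1-a^2)/(1-az)^2$ that the paper itself uses elsewhere (e.g.\ in the proofs of Theorem~\ref{bounded X_1} and Theorem~\ref{pr:nocompactx1}), so your argument fits naturally into its toolkit. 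One cosmetic remark: after your integration by parts the two $M(0)$ terms cancel, so you actually obtain the constant $C_\mu$ rather than $2C_\mu$; this is immaterial.
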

\begin{proof}
If $\mu$ is a $1$-Carleson measure for $X_1$, then  by \eqref{fejer} and \eqref{eq:HpDpp<2}, $\mu$ is a $1$-Carleson measure for $H^1$. So, by \cite[Theorem~9.3]{Duren} and its proof,
$\mu$ is a classical Carleson measure and $$\sup_{a\in [0,1)}\frac{\mu([a,1))}{1-a}\lesssim \| I_d\|_{X_1\to L^1(\mu)}.$$ Conversely, if $\mu$ is a classical Carleson measure, two integration by parts yield
$$ \int_{\D}  |f(z)|d \mu (z)=\int_0^1 |f(t)|d \mu (t)
\le \int_0^1 M_\infty(t,f)\,d \mu (t)
 \lesssim \| f\|_{H(\infty,1)}
\sup_{a\in [0,1)}\frac{\mu([a,1))}{1-a} 
.$$
This inequality, together with Lemma~\ref{HLP C LP}, finishes the proof.
\end{proof}

We introduce some more notation  to prove Theorem~\ref{bounded X_1}.
For any $C^\infty$-function $\Phi:\mathbb{R}\to\C$ with compact support,
define the polynomials
    \begin{equation*}
    W_n^\Phi(z)=\sum_{k\in\mathbb
    Z}\Phi\left(\frac{k}{n}\right)z^{k},\quad n\in\N.
    \end{equation*}

 A particular case of the previous construction is useful for our purposes.
 Some properties of these polynomials have been gathered in the next lemma, see \cite[Section~2]{JevPac98} or \cite[p. 143--144]{Pabook2} for a proof.

\begin{letterlemma}\label{pr:cesaro}
Let $\Psi:\mathbb{R}\to\mathbb{R}$ be a $C^\infty$-function such that $\Psi\equiv 1$ on $(-\infty,1]$, $\Psi\equiv 0$ on $[2,\infty)$ and $\Psi$ is decreasing and positive on $(1,2)$. Set $\psi(t)=\Psi\left(\frac{t}{2}\right)-\Psi(t)$ for all $t\in\mathbb{R}$. Let $V_{0}(z)=1+z$
and
    \begin{equation*}\label{vn}
    V_{n}(z)=W^\psi_{2^{n-1}}(z)=\sum_{j=0}^\infty
    \psi\left(\frac{j}{2^{n-1}}\right)z^j=\sum_{j=2^{n-1}}^{2^{n+1}-1}
    \psi\left(\frac{j}{2^{n-1}}\right)z^j,\quad n\in\N.
    \end{equation*}
 
Then,
    \begin{equation}
    \label{propervn1}
    f(z)=\sum_{n=0}^\infty (V_{n}\ast f)(z),\quad z\in\D,\quad f\in\H(\D),
    \end{equation}
    and for each $0<p<\infty$ there exists a constant $C=C(p,\Psi)>0$ such that
    \begin{equation}
    \label{propervn2}
   \|V_{n}\ast f\|_{H^p}\le C\|f\|_{H^p},\quad f\in H^p, \quad n \in \N.
    \end{equation}
    In addition 
    \begin{equation}
    \label{propervn3}
    \|V_n\|_{H^p}\asymp 2^{n(1-1/p)}, \quad 0< p<\infty.
    \end{equation}
\end{letterlemma}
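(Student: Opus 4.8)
The plan is to set $N=2^{n-1}$ and to exploit that $\psi=\Psi(\cdot/2)-\Psi$ is a \emph{fixed} function in $C_c^\infty(\mathbb{R})$ supported in $[1,4]$, so that $V_n$ is a smooth (de la Vallée Poussin type) frequency block with spectrum in $[2^{n-1},2^{n+1}]$. All three assertions then follow from one structural input, a Poisson-summation representation of the kernel.

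First I would establish the reconstruction formula \eqref{propervn1} by a telescoping argument on Taylor coefficients. Since $\psi(j/2^{n-1})=\Psi(j/2^{n})-\Psi(j/2^{n-1})$, the partial sums satisfy $\sum_{n=1}^{M}\psi(j/2^{n-1})=\Psi(j/2^{M})-\Psi(j)$, which tends to $1-\Psi(j)$ as $M\to\infty$; because $\Psi\equiv 0$ on $[2,\infty)$ and $V_0=1+z$ repairs the two lowest frequencies $j=0,1$, the total coefficient multiplier $\sum_{n\ge 0}V_n$ equals $1$ at every nonnegative integer. Convergence of $\sum_n(V_n\ast f)$ locally uniformly, hence pointwise, is automatic, since each frequency $j$ belongs to the spectrum of at most three blocks $V_n$ (those with $2^{n-1}\le j\le 2^{n+1}$), so for fixed $z\in\D$ the double sum converges absolutely and collapses to $\sum_j\widehat{f}(j)z^j=f(z)$.

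The analytic heart is the representation obtained from Poisson summation. Writing $\widehat{\psi}(\eta)=\int_{\mathbb{R}}\psi(u)e^{-iu\eta}\,du$, a Schwartz function since $\psi\in C_c^\infty$, Poisson summation gives $V_n(e^{i\theta})=N\sum_{m\in\mathbb{Z}}\widehat{\psi}\bigl(N(2\pi m-\theta)\bigr)$; for $\theta\in[-\pi,\pi]$ the term $m=0$ dominates and the rapid decay of $\widehat{\psi}$ renders the remaining terms negligible, uniformly in $n$. From this I would read off \eqref{propervn3} at one stroke and for \emph{every} $0<p<\infty$: a change of variables yields $\|V_n\|_{H^p}^p\asymp N^{p-1}\int_{\mathbb{R}}|\widehat{\psi}(u)|^p\,du\asymp 2^{n(p-1)}$, the upper bound from $\|\widehat{\psi}\|_{L^p(\mathbb{R})}<\infty$ and the lower bound from the main term on the arc $|\theta|\lesssim N^{-1}$, where $|V_n(e^{i\theta})|\gtrsim N$. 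The smoothness of $\psi$ is essential: the sharp block $\Delta_n$ of \eqref{eq:Dn} satisfies this estimate only for $1<p<\infty$, while $V_n$ satisfies it down to $p\to 0^{+}$.

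Finally, for \eqref{propervn2} the same representation gives the uniform bound $\|V_n\|_{L^1(\mathbb{T})}\asymp\|\widehat{\psi}\|_{L^1(\mathbb{R})}\le C$, so for $1\le p<\infty$ Young's convolution inequality on the circle, together with the fact that $V_n\ast f$ carries only nonnegative frequencies and hence remains in $H^p$, immediately yields $\|V_n\ast f\|_{H^p}\le C\|f\|_{H^p}$. The main obstacle is the range $0<p<1$, where Young's inequality is unavailable and one must invoke genuine $H^p$ multiplier theory. Because $\psi\in C^\infty$, the sampled sequence $\{\psi(j/2^{n-1})\}_j$ satisfies Marcinkiewicz--Hörmander smoothness conditions of every order, uniformly in $n$, so $f\mapsto V_n\ast f$ is a Calderón--Zygmund type smoothing at scale $2^{-n}$ that is bounded on $H^p$ for all $0<p<\infty$ with norm independent of $n$; this is exactly the technical core carried out in \cite[Section~2]{JevPac98} and \cite[pp.~143--144]{Pabook2}.
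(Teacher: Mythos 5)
First, some context: the paper does not actually prove this lemma --- it is stated as a quoted result, with the proof deferred to \cite[Section~2]{JevPac98} and \cite[p.~143--144]{Pabook2} --- so the comparison is necessarily with those sources. Most of your proposal is correct and genuinely self-contained. The telescoping identity $\sum_{n=1}^{M}\psi(j/2^{n-1})=\Psi(j/2^{M})-\Psi(j)$, the role of $V_0$ at the frequencies $j=0,1$, and the absolute convergence coming from the fact that each frequency lies in the spectrum of at most three blocks give a complete proof of \eqref{propervn1}. The Poisson summation formula is legitimate because $\psi\in C_c^\infty(\mathbb{R})$, and the representation $V_n(e^{i\theta})=N\sum_{m}\widehat{\psi}\left(N(2\pi m-\theta)\right)$, $N=2^{n-1}$, does yield \eqref{propervn3} for every $0<p<\infty$: the $m\neq 0$ tail is $O_K(N^{1-K})$ uniformly on $[-\pi,\pi]$, the upper bound follows from $\widehat{\psi}\in L^p(\mathbb{R})$ (take decay order $K>1/p$), and the lower bound from $|V_n|\gtrsim N$ on an arc of length $\asymp 1/N$ about $\theta=0$; only the finitely many small $n$, where the claim is trivial, need a separate word. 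The same representation gives $\sup_n\|V_n\|_{L^1(\mathbb{T})}<\infty$, and Young's inequality then settles \eqref{propervn2} for $1\le p<\infty$.

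The genuine gap is \eqref{propervn2} in the range $0<p<1$, which is exactly the nontrivial part of the lemma and the reason one uses the smooth blocks $V_n$ rather than the sharp blocks $\Delta_n$ of \eqref{eq:Dn}. For this case you offer no argument: you assert that the symbols satisfy ``Marcinkiewicz--H\"ormander conditions of every order, uniformly in $n$,'' conclude boundedness by ``Calder\'on--Zygmund type'' reasoning, and then cite the very references the paper itself cites. This is problematic on two counts. The Marcinkiewicz multiplier theorem is an $L^p$, $1<p<\infty$, statement and says nothing for $p\le 1$; what is needed is a coefficient-multiplier theorem for analytic $H^p$ with $p\le 1$, and such results (H\"ormander conditions of sufficiently high order, transferred from the real line to Fourier coefficients on the circle and then to analytic $H^p$) are of essentially the same depth as the inequality being proved, so invoking them without statement or proof leaves the hardest third of the lemma unproved; moreover the uniformity in $n$ is precisely what must be checked. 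The honest completion --- and in fact what the cited sources do --- is to combine the kernel estimate you already derived, $|V_n(e^{i\theta})|\le C_K 2^{n}(1+2^{n}|\theta|)^{-K}$, with the sub-mean-value property of $|f|^p$ for analytic $f$ (equivalently, with a convolution inequality of the type $\|W\ast f\|_{H^p}\le C\, m^{1/p-1}\|W\|_{H^p}\|f\|_{H^p}$ for $0<p\le1$ and polynomials $W$ of degree at most $m$, which together with \eqref{propervn3} gives a bound independent of $n$). As written, your proposal proves the lemma only for $1\le p<\infty$.
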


Let us denote $f_r(z)=f(rz)$, $z\in\D$, $r\in (0,1)$.
Now we are ready to prove  the main theorem of this section.

\begin{Prf}\textit{Theorem \ref{bounded X_1}.}
First of all, recall  that $M_{1,c}(\om)<\infty$ if and only if $K_{1,c}(\om)<\infty$ and analogously $M_{1,d}(\om)<\infty$ if and only if $K_{1,d}(\om)< \infty$, so that the equivalences (iii)$\Leftrightarrow$(iv)$\Leftrightarrow$(v)$\Leftrightarrow$(vi) follow from Lemma~\ref{le:m1}.
The equivalence between (ii) and (iii) is a consequence of \cite[Theorem~9.3]{Duren} when $X_1=H^1$, \cite[Theorem~2.1]{Vinogradov} when $X_1=D^1_0$ and Proposition \ref{1-medidas de Carleson para HL(1)} when $X_1\in \{H(\infty,1),HL(1)\}$.

 (i)$\Rightarrow$(iii). In order to obtain both conditions, $\omega\in\DD$ and $M_{1,c}(\omega)<\infty $, we are going to deal with functions  $f \in \H(\D)$ such that $\fg(n) \geq 0$ for all $n \in \N \cup \{0\}$, so   it is enough to prove the result for $T= H_{\om }$.

 {\bf{First Step.}} Let us prove $\omega\in\DD$.
Bearing in mind
Lemma~\ref{HLP C LP} and \eqref{eq:HpDpp<2}
\begin{equation}\begin{split}\label{eq:necp1}
\sum\limits_{n=0}^{\infty}\frac{\om_{n+N}}{(n+1)^2 \om_{2n+1}} \left(\sum\limits_{k=0}^{N}\fg (k)\right) & 
\leq 
\|\Ho (f)\|_{H(\infty,1)} \lesssim  \|\Ho (f)\|_{Y_1}
\lesssim \| f\|_{X_1}\lesssim
 \| f \|_{D_0^1}, \quad N\in \N,
\end{split}
\end{equation}
for any $f\in \H(\D)$ such that $\fg(n)\ge 0$,  $n\in\N\cup\{0\}$. Next, 
for each $N \in \N$, consider the test functions $f_{\a, N}(z)=\sum\limits_{k=0}^N (k+1)^{\a}z^k,\, \a>0$. 
Set $M \in \N$ such that $2^M <N \leq 2^{M+1}$. Then, bearing in mind \eqref{propervn1}, $$(f_{\a, N}')_s(z)=\sum\limits_{n=0}^\infty (V_{n}\ast (f_{\a,N}')_s)(z)=\sum\limits_{n=0}^M (V_{n}\ast (f_{\a,N}')_s)(z),$$ which together with \cite[Lemma~3.1]{MatPav}, \cite[Lemma~5.4]{Pabook2} and \eqref{propervn3} gives
\begin{align*}
\| f_{\a, N}\|_{D_0^1}&\le\int_0^1 M_1(s, f_{\a,N}')ds \leq \sum\limits_{n=0}^M \int_0^1 \| V_{n}\ast (f_{\a,N}')_s \|_{H^1}ds \\
&\lesssim \sum\limits_{n=0}^M \int_0^1  s^{2^{n-1}} 2 ^{n(\a +1)} \| V_n\|_{H^1} ds \asymp  \sum\limits_{n=0}^M 2^{n\a}\asymp 2^{M\a} \asymp (N+1)^{\a}.
\end{align*}
Testing the functions $f_{\a, N}$ in 
 \eqref{eq:necp1},  
$\sup\limits_{N\in\N}(N+1)\sum\limits_{n=0}^{\infty}\frac{\om_{n+N}}{(n+1)^2 \om_{2n+1}} <\infty. $ Therefore,  there exists $C=C(\om)>0$
$$ \frac{\om_{8N}}{\om_{12N}} \lesssim \frac{\om_{8N}}{\om_{12N}} (N+1)\sum\limits_{n=6N}^{7N}\frac{1}{(n+1)^2} \leq (N+1) \sum\limits_{n=6N}^{7N}\frac{\om_{n+N}}{(n+1)^2 \om_{2n+1}} \leq C.$$
So, arguing as in the first step proof of Proposition \ref{cond nec Xp},  $\om \in \DD.$

 {\bf{Second Step.}} We will prove 
 $M_{1,c}(\omega)<\infty $. Let us consider the test functions  $f_a(z)=\frac{1-a^2}{(1-az)^2}$, $a\in (0,1)$.
A calculation shows that $ \|f_a\|_{D^1_0}\asymp 1$, $a\in (0,1)$.
 Then, by Lemma \ref{HLP C LP} and \eqref{eq:HpDpp<2},
$$\|H_\omega\|_{X_1\to Y_1}\gtrsim \sup_{a\in (0,1)}  \|\Ho(f_a)\|_{Y_1} \gtrsim 
\sup_{a\in (0,1)} \|\Ho(f_a)\|_{L^1_{[0,1)}}.$$
Consequently, using that $\omega\in\DD$
and  mimicking the proof $(4.2)$ of \cite[Theorem~2]{PelRosa21}, we get $M_{1,c}(\omega)<\infty.$

Now let us prove (iv) $\Rightarrow$ (i). 
Firstly, observe that the condition
$ M_{1,c}(\omega)<\infty$ implies $ K_{1,c}(\omega)<\infty$ so that $\widetilde{\omega}(t)=\frac{\omg(t)}{1-t}$ is bounded on $[0,1)$. So, using Lemma ~\ref{HLP C LP} and 
\eqref{eq:intparts}, 
$$\int_0^1 M_\infty(t,f)\omega(t)\,dt\lesssim \int_0^1 M_\infty(t,f)\widetilde{\omega}(t)\,dt\lesssim 
\| f\|_{H(\infty,1)}\lesssim \| f\|_{X_1},$$
that is $H_\omega(f)\in \H(\D)$ for any $f\in X_1$. Secondly, 
by \eqref{eq:HpDpp<2} and Lemma~\ref{HLP C LP},
 it is enough  to prove the inequality
 $$ \| \Ho (f) \|_{D^1_0}\lesssim \|f\|_{H(\infty,1)},\quad f\in H(\infty,1),$$
 to end the proof.
Indeed,
\begin{align*}
\| \Ho (f) \|_{D^1_0}&\le\int_0^1 M_1(s, \Ho (f)') ds \leq \int_0^1 \left( \int_0^1  |f(t)| \, \om (t) M_1(s, G^\omega_t ) dt \right) ds
\\&= \int_0^1  |f(t)| \om (t) \left(\int_0^1  M_1(s,  G^\omega_t ) ds \right) dt .
\end{align*}
Then by \eqref{eq:Gwt} and \cite[Lemma~B]{PelRosa21}
 $$M_1(s,G^\omega_t) 
  \asymp 1+\int_0^{st}\frac{dx}{\omg (x)(1-x)}, \quad 0\leq s,t<1.$$
Bearing in mind that $ M_{1,c}(\omega)<\infty$ implies $ K_{1,c}(\omega)<\infty$ and applying Proposition~\ref{1-medidas de Carleson para HL(1)}, the measure $\mu_\om$ defined as $d\mu_{\om}(z)= \om(z)\left(1+\int_0^{|z|} \frac{ds}{\omg(s)}\right)\,\chi_{[0,1)}(z)\,  dA(z)$ is a $1$-Carleson measure for $H(\infty , 1)$, so by Tonelli's theorem,
\begin{align}\label{eq:D10HL1}
 \| \Ho (f) \|_{D^1_0} &\lesssim  \int_0^1  |f(t)| \, \om (t) \left(  1+  \int_0^1 \left(  \int_0^{st} \frac{dx}{\omg (x)(1-x)} \right) ds \right) dt 
 \nonumber
 \\&= \int_0^1  |f(t)| \, \om (t) \left(  1+  \int_0^t \frac{(1-\frac{x}{t})}{\omg (x)(1-x)} dx  \right) dt \nonumber
 \\ & \leq \int_0^1  |f(t)| \, \om (t) \left(  1+  \int_0^t \frac{dx}{\omg (x)} \right) dt \lesssim
\|f\|_{H(\infty,1)}.
\end{align}

This finishes the proof.
\end{Prf}

\subsection{$\mathbf{H_\omega: X_1\to Y_1}$ versus $\mathbf{H_\omega: L^1_{[0,1)}\to Y_1}$. }

Firstly, we will study the boundedness of $T: L^1_{[0,1)}\to Y_1$, $T\in \{H_\omega,\widetilde{\Ho}\}$,  $Y_1\in \{H(\infty,1),H^1, D^1_{0}, HL(1)\}$.

\begin{theorem}
\label{bounded L1}
Let  $\om$ be a radial weight, let $Y_1\in \{H(\infty,1),H^1, D^1_{0}, HL(1)\}$ and let $T \in \{ \Ho, \widetilde{\Ho}\}$.  Then the following statements are equivalent:
\begin{itemize}
    \item[(i)] $T: L^1_{[0,1)}\to Y_1$ is bounded; 
    \item[(ii)] $\om \in \DDD$ and  
    $m_1(\om)= \esssup\limits_{t \in [0,1)}
         \om(t)\left(1+\int_0^t \frac{ds}{\omg(s)}\right)<\infty.$
    \item[(iii)] $\om \in \DD$ and  
    $m_1(\om)= \esssup\limits_{t \in [0,1)}
         \om(t)\left(1+\int_0^t \frac{ds}{\omg(s)}\right)<\infty.$    
\end{itemize}

\end{theorem}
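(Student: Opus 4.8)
The plan is to prove the cycle of equivalences by reducing everything to machinery already assembled for Theorem~\ref{bounded X_1}, treating $m_1(\om)<\infty$ as the $p=1$ endpoint of the condition $m_p(\om)<\infty$. The implication (ii)$\Rightarrow$(iii) is immediate since $\DDD\subset\DD$. For (iii)$\Rightarrow$(ii) I would record the trivial pointwise bound
$$\frac{1}{1-a}\int_a^1\om(t)\left(1+\int_0^t\frac{ds}{\omg(s)}\right)dt\le m_1(\om),\qquad a\in[0,1),$$
so that $m_1(\om)<\infty$ forces $M_{1,c}(\om)\le K_{1,c}(\om)<\infty$; since $\om\in\DD$, Lemma~\ref{le:m1} then gives $\om\in\Dd$, hence $\om\in\DDD$. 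Thus (ii)$\Leftrightarrow$(iii), and it remains to link them with (i).

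For sufficiency (iii)$\Rightarrow$(i) I would run the last step of the proof of Theorem~\ref{bounded X_1} verbatim, only replacing the Carleson-measure bound by a direct one. Note first that $m_1(\om)<\infty$ forces $\om\le m_1(\om)$ to be bounded, and that $\sum_{n}\frac{\rho^n}{(n+1)\om_{2n+1}}<\infty$ for each $\rho<1$ because $\om\in\DD$ (as in Lemma~\ref{Conv compact}); hence the integral defining $T(f)$ converges for every $z\in\D$ and is analytic for each $f\in L^1_{[0,1)}$. By \eqref{eq:HpDpp<2}, \eqref{Xp:Hinftyp} and Lemma~\ref{HLP C LP} it suffices to bound the $D^1_0$-norm, and the computation \eqref{eq:D10HL1}, which uses only \cite[Lemma~B]{PelRosa21} and Minkowski's inequality, yields for both $T\in\{\Ho,\widetilde{\Ho}\}$
$$\|T(f)\|_{D^1_0}\lesssim\int_0^1|f(t)|\,\om(t)\left(1+\int_0^t\frac{dx}{\omg(x)}\right)dt\le m_1(\om)\,\|f\|_{L^1_{[0,1)}}.$$

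The main work is the necessity (i)$\Rightarrow$(iii). Since $\|f\|_{L^1_{[0,1)}}\le\|f\|_{H(\infty,1)}\lesssim\|f\|_{Y_1}$ for analytic $f$ by Lemma~\ref{HLP C LP}, the map $Y_1\hookrightarrow L^1_{[0,1)}$ is bounded, so boundedness of $T:L^1_{[0,1)}\to Y_1$ restricts to boundedness of $T:Y_1\to Y_1$, and Theorem~\ref{bounded X_1} gives $\om\in\DD$. To extract $m_1(\om)<\infty$ I would test against approximate identities: at a Lebesgue point $r$ of $\om$ put $f_\delta=\frac{1}{2\delta}\chi_{[r-\delta,r+\delta]}$, so $\|f_\delta\|_{L^1_{[0,1)}}=1$ and, since $t\mapsto K^\om_t(z)$ is continuous and bounded on $[0,1)$ for fixed $z\in\D$, one has $H_\om(f_\delta)\to\om(r)K^\om_r$ uniformly on compact subsets of $\D$ (here $H_\om(f_\delta)=\widetilde{H_\om}(f_\delta)$ as $f_\delta\ge0$). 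Because $K^\om_r$ has nonnegative coefficients, $\|K^\om_r\|_{H(\infty,1)}=\int_0^1 K^\om_r(s)\,ds=\sum_{n}\frac{r^n}{2(n+1)^2\om_{2n+1}}$; restricting this series to $n\le N\asymp(1-r)^{-1}$, where $r^n\gtrsim1$, and invoking the argument giving the first estimate in the proof of Lemma~\ref{reformulacionMp} (which for $p=1$ uses only $\om\in\DD$) shows $\int_0^1 K^\om_r(s)\,ds\gtrsim1+\int_0^r\frac{ds}{\omg(s)}$.

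Combining Fatou's lemma with Lemma~\ref{HLP C LP} then gives, at every Lebesgue point $r$ of $\om$,
$$\om(r)\int_0^1 K^\om_r(s)\,ds=\|\om(r)K^\om_r\|_{H(\infty,1)}\le\liminf_{\delta\to0}\|H_\om(f_\delta)\|_{H(\infty,1)}\lesssim\|T\|_{L^1_{[0,1)}\to Y_1},$$
and taking the essential supremum over $r$ yields $m_1(\om)<\infty$, completing (i)$\Rightarrow$(iii). The delicate point I expect to be the crux is precisely this limiting test-function step: justifying the local-uniform convergence $H_\om(f_\delta)\to\om(r)K^\om_r$, passing the $H(\infty,1)$-norm to the limit via Fatou, and proving the weighted asymptotic $\int_0^1 K^\om_r(s)\,ds\asymp1+\int_0^r\frac{ds}{\omg(s)}$, which is exactly where $\om\in\DD$ is used.
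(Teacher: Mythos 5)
Your proposal is correct, and most of it coincides with the paper's own argument: the sufficiency (iii)$\Rightarrow$(i) is exactly the paper's proof (the computation \eqref{eq:D10HL1} with the final Carleson-measure bound replaced by the trivial estimate $\int_0^1|f(t)|\,\om(t)(1+\int_0^t\frac{dx}{\omg(x)})\,dt\le m_1(\om)\|f\|_{L^1_{[0,1)}}$), and the first half of the necessity — restricting $T$ to $Y_1\to Y_1$ via Lemma~\ref{HLP C LP} and invoking Theorem~\ref{bounded X_1} to get $\om\in\DD$ (indeed $\DDD$) — is also the paper's opening step. Where you genuinely diverge is in extracting $m_1(\om)<\infty$ from (i). The paper applies Tonelli's theorem to \emph{arbitrary} nonnegative $f\in L^1_{[0,1)}$, obtaining $\int_0^1 f(t)\,\om(t)\left(1+\int_0^t\frac{ds}{\omg(s)}\right)dt\lesssim\|f\|_{L^1_{[0,1)}}$ by citing the proof of \cite[Theorem~2]{PelRosa21} for the kernel lower bound $\int_0^1 K^\om_t(s)\,ds\gtrsim 1+\int_0^t\frac{ds}{\omg(s)}$, and then concludes by the (implicit) duality $(L^1)^*=L^\infty$. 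You instead localize: approximate identities $f_\delta$ at a Lebesgue point $r$ of $\om$, local-uniform convergence $H_\om(f_\delta)\to\om(r)K^\om_r$, Fatou for the $H(\infty,1)$-norm, and a self-derived form of the same kernel bound (the $p=1$ case of the first estimate in the proof of Lemma~\ref{reformulacionMp}, which indeed uses only $\om\in\DD$; this is also how the paper proves $K_{1,d}(\om)\asymp M_1(\om)$ in Lemma~\ref{le:m1}). The two mechanisms are equivalent in substance — your Lebesgue-point argument is the concrete instantiation of the duality step — but yours is self-contained where the paper defers to an external citation, at the cost of the extra limiting justifications, all of which you handle correctly: the series $\sum_n\frac{\rho_0^n}{2(n+1)\om_{2n+1}}$ converges, so the kernel is uniformly continuous and bounded on $[0,1]\times\overline{D(0,\rho_0)}$, the local-uniform convergence holds at Lebesgue points, and Fatou passes the norm to the limit. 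One structural point in your favor: proving (iii)$\Rightarrow$(ii) first (via Lemma~\ref{le:m1}) makes explicit that $\Dd$, hence $\DDD$, is available before the sufficiency step, which the paper leaves implicit when it reuses \eqref{eq:D10HL1} under the hypothesis (iii).
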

\begin{proof}
(i)$\Rightarrow$(ii). If (i) holds, then  $\om \in \DDD$ by Lemma~\ref{HLP C LP} and Theorem~\ref{bounded X_1}. Next,
using Lemma~\ref{HLP C LP} again and making minor modifications in the proof of
\cite[Theorem~2]{PelRosa21} we get
$$\|T (f)\|_{Y_1}\gtrsim \|H_\om(f)\|_{L^1_{[0,1)}}\gtrsim \int_{0}^1 f(t)\om(t)\left(1+\int_0^t \frac{ds}{\omg(s)}\right)\,dt,$$
for any $f\ge 0$, $f\in  L^1_{[0,1)}$. So,
$$\int_{0}^1 f(t)\om(t)\left(1+\int_0^t \frac{ds}{\omg(s)}\right)\,dt\lesssim \|f\|_{L^1_{[0,1)}},\quad f\ge 0$$
which implies that $m_1(\om)<\infty$.

(ii)$\Rightarrow$(iii) is clear.

(iii)$\Rightarrow$(i). If (iii) holds, then $H_\omega(f)\in \H(\D)$ for any $f\in L^1_{[0,1)}$ and arguing as in \eqref{eq:D10HL1}
\begin{align*}
 \| \Ho (f) \|_{D^1_0} &\lesssim  \int_0^1  |f(t)| \, \om (t) \left(  1+  \int_0^t \frac{dx}{\omg (x)} \right) dt \lesssim
 \| f\|_{L^1_{[0,1)}}.
\end{align*}
This together with \eqref{eq:HpDpp<2} and Lemma~\ref{HLP C LP} gives that $H_{\om }: L^1_{[0,1)}\to Y_1$ is bounded. This finishes the proof.
\end{proof}

Joining Theorem~\ref{bounded X_1}, Theorem~\ref{bounded L1} and Lemma~\ref{HLP C LP} we deduce the following.

\begin{corollary}\label{co:goodweightsp1}
Let  $\om$ be a radial weight, $X_1,Y_1\in \{H(\infty,1),H^1, D^1_{0}, HL(1)\}$ and let $T\in \{\Ho, \widetilde{\Ho}\}$. If
$\omega$ satisfies the condition \eqref{eq:regularity}, then the following statements are equivalent:
\begin{itemize}
\item[(i)] $T:L^1_{[0,1)}\to Y_1$ is bounded;
\item[(ii)] $T: X_1 \to Y_1$ is bounded;
\item[(iii)] $\om \in \DD$ and $M_{1,c}(\om)<\infty$;
\item[(iv)] $\om \in \DDD$ and $M_{1,c}(\om)<\infty$;
\item[(v)] $\om \in \DD$ and $m_1(\om)<\infty$;
\item[(vi)] $\om \in \DDD$ and $m_1(\om)<\infty$.
\end{itemize}
\end{corollary}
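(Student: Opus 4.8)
The plan is to assemble the six conditions into a single cycle of implications,
\[
\text{(i)}\Rightarrow\text{(ii)}\Leftrightarrow\text{(iii)}\Leftrightarrow\text{(iv)},\qquad \text{(iii)}\Rightarrow\text{(v)}\Leftrightarrow\text{(vi)}\Rightarrow\text{(i)},
\]
so that almost all of the work is already carried out in Theorem~\ref{bounded X_1} and Theorem~\ref{bounded L1}, and the regularity hypothesis \eqref{eq:regularity} is invoked exactly once, namely to turn the Hardy-type condition $M_{1,c}(\om)<\infty$ into the pointwise condition $m_1(\om)<\infty$.

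First I would collect the free equivalences. The chain (ii)$\Leftrightarrow$(iii)$\Leftrightarrow$(iv) is nothing but the equivalence of items (i), (iii) and (iv) of Theorem~\ref{bounded X_1}, while (v)$\Leftrightarrow$(vi) and the implication (vi)$\Rightarrow$(i) are, respectively, the equivalence (iii)$\Leftrightarrow$(ii) and the implication (ii)$\Rightarrow$(i) of Theorem~\ref{bounded L1}. For (i)$\Rightarrow$(ii) I would use that, for $T\in\{\Ho,\widetilde{\Ho}\}$, the image $T(f)$ depends only on the restriction of $f$ to $[0,1)$; since for $f\in X_1$ one has
\[
\int_0^1 |f(t)|\,dt\le \int_0^1 M_\infty(t,f)\,dt=\|f\|_{H(\infty,1)}\lesssim \|f\|_{X_1}
\]
by Lemma~\ref{HLP C LP}, the restriction lies in $L^1_{[0,1)}$ with norm controlled by $\|f\|_{X_1}$, and the assumed boundedness of $T:L^1_{[0,1)}\to Y_1$ gives $\|T(f)\|_{Y_1}\lesssim \|f\|_{X_1}$, which is (ii).

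The only substantive step, and the one I expect to be the crux, is the bridge (iii)$\Rightarrow$(v). Assuming $\om\in\DD$ and $M_{1,c}(\om)<\infty$, Theorem~\ref{bounded X_1} (equivalently Lemma~\ref{le:m1}) yields $M_{1,d}(\om)<\infty$, hence $K_{1,d}(\om)<\infty$, that is
\[
\sup_{t\in[0,1)}\frac{\omg(t)}{1-t}\left(1+\int_0^t\frac{ds}{\omg(s)}\right)<\infty .
\]
Feeding the pointwise bound \eqref{eq:regularity} into this supremum gives, for almost every $t\in[0,1)$,
\[
\om(t)\left(1+\int_0^t\frac{ds}{\omg(s)}\right)\le C\,\frac{\omg(t)}{1-t}\left(1+\int_0^t\frac{ds}{\omg(s)}\right)\le C\,K_{1,d}(\om),
\]
so $m_1(\om)<\infty$; as $\om\in\DD$ is retained, this is precisely (v). The point to stress is that \eqref{eq:regularity} is what converts an \emph{averaged} (Hardy-operator) condition into an \emph{essential-supremum} condition: without it the integrated quantity $M_{1,d}(\om)$ need not dominate $m_1(\om)$, since a strongly oscillating weight can keep the average small while letting $\om(t)\,(1+\int_0^t ds/\omg(s))$ blow up, and this is exactly the gap that separates $\Ho\colon L^1_{[0,1)}\to Y_1$ from $\Ho\colon X_1\to Y_1$ in general.
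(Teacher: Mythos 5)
Your proposal is correct and follows essentially the same route as the paper: (i)$\Rightarrow$(ii) via Lemma~\ref{HLP C LP}, (ii)$\Leftrightarrow$(iii)$\Leftrightarrow$(iv) from Theorem~\ref{bounded X_1}, (v)$\Leftrightarrow$(vi)$\Rightarrow$(i) from Theorem~\ref{bounded L1}, and the bridge (iii)$\Rightarrow$(v) obtained by passing (via Lemma~\ref{le:m1}, using $\om\in\DD$) from $M_{1,c}(\om)<\infty$ to the pointwise condition $K_{1,d}(\om)<\infty$ and then applying \eqref{eq:regularity}. Your write-up merely spells out in more detail the steps the paper states tersely, so there is nothing to correct.
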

\begin{proof}
(i)$\Rightarrow$(ii) follows from Lemma~\ref{HLP C LP}, and (ii)$\Leftrightarrow$(iii)$\Leftrightarrow$(iv) were proved in Theorem \ref{bounded X_1}.
Next, since $M_{1,c}(\om)<\infty$ implies $K_{1,c}(\om)<\infty$, (iii)$\Rightarrow$(v) is a byproduct of
the hypothesis \eqref{eq:regularity}. Finally, the equivalences (v)$\Leftrightarrow$(vi)$\Leftrightarrow$(i) follow from  Theorem~\ref{bounded L1}.
This finishes the proof.
\end{proof}

A similar comparison between the conditions $M_{1,c}(\om)<\infty$ and $m_1(\om)<\infty$, to that  made for the conditions
$M_{p,c}(\om)<\infty$ and $m_p(\om)<\infty$, $1<p<\infty$, can also be considered. The following result shows that they are not equivalent.

\begin{corollary}\label{co:comparisonL1-X1}
Let   $X_1,Y_1\in \{H(\infty,1),H^1, D^1_{0}, HL(1)\}$.
For each radial weigth $\nu$ such that $ Q: L^1_{[0,1)}\to Y_1$ is bounded, where $Q\in\{ H_\nu,\widetilde{H_\nu}\}$, there is a radial weight 
$\omega$ such that $$\widehat{\omega}(t)\asymp \widehat{\nu}(t),\quad t\in [0,1),$$ 
$\omega\in L^\infty_{[0,r_0]}$ for any $r_0\in(0,1)$,  $T: X_1\to Y_1$ is bounded and 
 $T: L^1_{[0,1)}\to Y_1$ is not bounded. Here $T\in\{ H_\omega,\widetilde{H_\omega}\}$.
\end{corollary}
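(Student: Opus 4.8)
The plan is to mimic the proof of Corollary~\ref{co:comparisonLp-Xp}, reading it with the endpoint exponent $p'=\infty$ and replacing the appeal to Lemma~\ref{le:cont} by its natural $p=1$ counterpart. First I would record the consequences of the hypothesis: since $Q\colon L^1_{[0,1)}\to Y_1$ is bounded, Lemma~\ref{HLP C LP} together with Theorem~\ref{bounded L1} gives $\nu\in\DDD$ and $m_1(\nu)<\infty$. Because $1+\int_0^t\frac{ds}{\widehat{\nu}(s)}\ge 1$, the a.e.\ pointwise bound $\nu(t)\bigl(1+\int_0^t\frac{ds}{\widehat{\nu}(s)}\bigr)\le m_1(\nu)$ integrates to $K_{1,c}(\nu)\le m_1(\nu)<\infty$, so $M_{1,c}(\nu)<\infty$ and hence, by Lemma~\ref{le:m1}, $M_{1,d}(\nu)<\infty$.

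Second, I would build $\omega$ exactly by the construction of Lemma~\ref{le:cont}. Writing $\widetilde{\nu}\in\DDD$ (Lemma~\ref{le:descriptionDDD}), I fix $K>1$ so that $\widetilde{\nu}$ satisfies \eqref{D chek y k}, set $r_n=1-K^{-n}$, $t_n=r_n+a_n$, and
\[
\omega(t)=\sum_{n=0}^\infty h_n\chi_{[r_n,t_n]}(t)\,\widetilde{\nu}(t),\qquad h_n=\frac{\widehat{\widetilde{\nu}}(r_n)-\widehat{\widetilde{\nu}}(r_{n+1})}{\widehat{\widetilde{\nu}}(r_n)-\widehat{\widetilde{\nu}}(t_n)}.
\]
The part of the argument in Lemma~\ref{le:cont} that yields $\int_0^1\omega\,dt=\widehat{\widetilde{\nu}}(0)<\infty$, $\widehat{\omega}(t)\asymp\widehat{\nu}(t)$ and $\omega\in\DDD$ never used $p>1$, so it carries over verbatim. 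The one change is the choice of the spike widths: instead of $a_n<\min\bigl(r_{n+1}-r_n,(\widehat{\widetilde{\nu}}(r_n))^p/(n+1)^{p-1}\bigr)$ I would impose $0<a_n<\min\bigl(r_{n+1}-r_n,\widehat{\widetilde{\nu}}(r_n)/(n+1)\bigr)$. Then, since $1-\frac{1-r_n}{K}=r_{n+1}$, estimate \eqref{D chek y k} gives $\int_{r_n}^{t_n}\omega\,dt=\widehat{\widetilde{\nu}}(r_n)-\widehat{\widetilde{\nu}}(r_{n+1})\asymp\widehat{\widetilde{\nu}}(r_n)\asymp\widehat{\nu}(r_n)$, so the mean of $\omega$ over $[r_n,t_n]$ is $\asymp\widehat{\nu}(r_n)/a_n\gtrsim(n+1)\to\infty$. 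Hence $\esssup_{[r_n,t_n]}\omega\to\infty$ and $\omega\notin L^\infty_{[0,1)}$, whereas, as only finitely many spikes meet any $[0,r_0]$ and $\widetilde{\nu}$ is bounded there, $\omega\in L^\infty_{[0,r_0]}$ for every $r_0\in(0,1)$.

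Finally I would assemble the conclusion. Because $1+\int_0^t\frac{ds}{\widehat{\omega}(s)}\ge 1$, the essential unboundedness of $\omega$ forces $m_1(\omega)=\infty$, so Theorem~\ref{bounded L1} shows $T\colon L^1_{[0,1)}\to Y_1$ is not bounded. On the other hand $M_{1,d}$ depends only on $\widehat{\omega}$, so $\widehat{\omega}\asymp\widehat{\nu}$ together with $\omega,\nu\in\DD$ yields $M_{1,d}(\omega)\asymp M_{1,d}(\nu)<\infty$; by Lemma~\ref{le:m1} this is $M_{1,c}(\omega)<\infty$, and with $\omega\in\DDD$ Theorem~\ref{bounded X_1} gives that $T\colon X_1\to Y_1$ is bounded. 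The single delicate point is the middle paragraph, namely checking that shrinking $a_n$ drives $\esssup\omega$ to infinity while leaving $\widehat{\omega}\asymp\widehat{\nu}$, the total mass, and membership in $\DDD$ intact; this is precisely the $p=1$ reading of Lemma~\ref{le:cont}.
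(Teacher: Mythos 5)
Your proof is correct and takes essentially the same route as the paper: the paper likewise extracts $\nu\in\DDD$ and $M_{1,c}(\nu)<\infty$ from the hypothesis, invokes ``Lemma~\ref{le:cont} and its proof'' to produce $\omega$ with $\widehat{\omega}\asymp\widehat{\nu}$, $\omega\in L^{\infty}_{[0,r_0]}$ for every $r_0\in(0,1)$ but $\omega\notin L^{\infty}_{[0,1)}$, and then concludes via Theorem~\ref{bounded L1} (non-boundedness on $L^1_{[0,1)}$, since $m_1(\omega)=\infty$) and Theorem~\ref{bounded X_1} (boundedness on $X_1$, since $\omega\in\DDD$ and $M_{1,c}(\omega)<\infty$). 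Your middle paragraph, choosing the spike widths $a_n\le\widehat{\widetilde{\nu}}(r_n)/(n+1)$ so that the means, and hence the essential suprema, of $\omega$ over $[r_n,t_n]$ blow up, is precisely the $p=1$ adaptation that the paper compresses into the phrase ``and its proof.''
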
 
\begin{proof}
Since $ Q: L^1_{[0,1)}\to Y_1$ is bounded, by Theorem~\ref{bounded X_1}, $\nu\in\DDD$ and $M_{1,c}(\nu)<\infty$. Now, by Lemma~\ref{le:cont}
and its proof,  there is  
a radial weight $\omega$ such that 
$\widehat{\omega}(t)\asymp \widehat{\nu}(t)$, $\omega\in L^\infty_{[0,r_0]}$ for any $r_0\in(0,1)$ and $\omega\not\in L^{\infty}_{[0,1)}$. So, $m_1(\omega)=\infty$ and by Theorem~\ref{bounded L1},
$T: L^1_{[0,1)}\to Y_1$ is not bounded. Moreover, $\omega\in\DDD$ and $M_{1,c}(\om)<\infty$ because $\nu$ satisfies both properties, consequently
$T: X_1\to Y_1$ is bounded.
\end{proof}

\subsection{Compactness of Hilbert-type operators on  $X_1$-spaces. }

\begin{lemma}\label{Conv compactX1}
Let   $\om \in \DDD$ such that $M_{1,d}(\omega)<\infty$. Let
 $\{f_k\}_{k=0}^\infty\subset X_1  \in \{ H(\infty,1),D^1_{0}, H^1, HL(1)\}$ such that $\sup\limits_{k\in\N} \|f_k\|_{X_1}<\infty$ and $f_k\to 0$ uniformly on compact subsets of $\D$.
 Then the following statements hold:
\begin{enumerate}
\item[\rm(i)] $\int_0^1 |f_k(t)|\om(t)dt\to 0$ when $k\to\infty$.
\item[\rm(ii)] If $T\in\{H_\omega, \widetilde{H_\omega}\}$, then $T(f_k)\to 0$ uniformly on compact subsets of $\D$.
\end{enumerate}
\end{lemma}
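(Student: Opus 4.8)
The plan is to follow the scheme of the proof of Lemma~\ref{Conv compact}, adapting the $L^{p'}$--$L^p$ duality used there to the $L^\infty$--$L^1$ duality appropriate to the endpoint $p=1$. Since $\om\in\DDD$, Lemma~\ref{le:descriptionDDD} ensures that $\widetilde{\om}(t)=\frac{\omg(t)}{1-t}$ is a radial weight with $\widehat{\widetilde{\om}}\asymp\omg$, and \eqref{eq:intparts} gives, for every $k$,
\begin{equation*}
\int_0^1 |f_k(t)|\om(t)\,dt\le |f_k(0)|\,\omg(0)+\int_0^1 M_\infty(t,f_k)\widetilde{\om}(t)\,dt .
\end{equation*}
In the range $1<p<\infty$ the tail $\int_{\rho_0}^1 M_\infty(t,f_k)\widetilde{\om}(t)\,dt$ was made small using $\widetilde{\om}\in L^{p'}_{[0,1)}$ together with the uniform bound on $\|f_k\|_{H(\infty,p)}$. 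For $p=1$ the role of ``$\widetilde{\om}$ has small $L^{p'}$-tail'' must be played by ``$\widetilde{\om}$ is small near $t=1$'', so the key preliminary step is to show that the hypothesis $M_{1,d}(\om)<\infty$ forces $\lim_{t\to1^-}\widetilde{\om}(t)=0$.

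To prove this, recall from the remark following Lemma~\ref{le:m1} that $M_{1,d}(\om)<\infty$ is equivalent to $K_{1,d}(\om)<\infty$, i.e. $\frac{\omg(a)}{1-a}\bigl(1+\int_0^a\frac{ds}{\omg(s)}\bigr)\le K_{1,d}(\om)$ for all $a\in[0,1)$. Dropping the integral gives $\widetilde{\om}(a)\le K_{1,d}(\om)$, so $\omg(s)\le K_{1,d}(\om)(1-s)$ and hence $\int_0^1\frac{ds}{\omg(s)}\ge K_{1,d}(\om)^{-1}\int_0^1\frac{ds}{1-s}=\infty$. Feeding this back, $\widetilde{\om}(a)\le K_{1,d}(\om)\bigl(1+\int_0^a\frac{ds}{\omg(s)}\bigr)^{-1}\to0$ as $a\to1^-$, as claimed. (Equivalently, this says precisely that the measure $\om(t)\,dt$ on $[0,1)$ is a vanishing classical Carleson measure.)

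With this in hand, (i) follows as in Lemma~\ref{Conv compact}: given $\e>0$, choose $\rho_0$ with $\sup_{t\ge\rho_0}\widetilde{\om}(t)<\e$, and then $k_0$ so that $M_\infty(t,f_k)<\e$ on $[0,\rho_0]$ and $|f_k(0)|<\e$ for $k\ge k_0$ (possible since $f_k\to0$ uniformly on compact subsets of $\D$). Splitting the integral at $\rho_0$ and using Lemma~\ref{HLP C LP} to bound $\|f_k\|_{H(\infty,1)}\lesssim\|f_k\|_{X_1}\le\sup_k\|f_k\|_{X_1}=:C$, we get
\begin{equation*}
\int_0^1|f_k(t)|\om(t)\,dt\le \e\,\omg(0)+\e\int_0^{\rho_0}\widetilde{\om}(t)\,dt+\Bigl(\sup_{t\ge\rho_0}\widetilde{\om}(t)\Bigr)\int_{\rho_0}^1 M_\infty(t,f_k)\,dt\lesssim \e\,(1+C),
\end{equation*}
which proves (i) since $\widetilde{\om}\in L^1_{[0,1)}$.

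Finally, (ii) is obtained exactly as in Lemma~\ref{Conv compact}: for a compact $M\subset\overline{D(0,\rho_0)}$, the estimate $|T(f_k)(z)|\le\bigl(\sup_{z\in M,\,t\in[0,1)}|K_t^\om(z)|\bigr)\int_0^1|f_k(t)|\om(t)\,dt$ together with the convergence $\sum_{k}\frac{\rho_0^k}{2(k+1)\om_{2k+1}}<\infty$ (valid since $\om\in\DD$, so $1/\om_{2k+1}$ grows at most polynomially by Lemma~\ref{caract. pesos doblantes}) and part (i) gives $T(f_k)\to0$ uniformly on $M$. The only genuinely new ingredient compared with the case $1<p<\infty$ is the vanishing-at-the-boundary property of $\widetilde{\om}$ established above; everything else is a transcription of the earlier argument, and I expect that property to be the main (and only) obstacle.
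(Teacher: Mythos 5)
Your proposal is correct and follows essentially the same route as the paper: the paper's proof also first shows $\lim_{a\to1^-}\widehat{\omega}(a)/(1-a)=0$ by proving $\int_0^a \frac{ds}{\widehat{\omega}(s)}\to\infty$ and feeding this back into the $M_{1,d}$-condition, then splits the integral in (i) exactly as you do, and obtains (ii) from the uniform kernel bound together with (i). The only (harmless) difference is in how the divergence of $\int_0^1\frac{ds}{\widehat{\omega}(s)}$ is established: you use the pointwise bound $\widehat{\omega}(s)\le K_{1,d}(\omega)(1-s)$ obtained by dropping the integral in the $K_{1,d}$-condition, whereas the paper integrates the inequality $\frac{1}{\widehat{\omega}(s)}\ge M_{1,d}(\omega)^{-1}\frac{1}{1-s}\int_0^s\frac{dt}{\widehat{\omega}(t)}$ over $[\frac12,\frac{1+a}{2}]$ to get a logarithmic lower bound; both are valid.
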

\begin{proof}
Firstly, let us prove that 
\begin{equation}\label{eq:compact1}
\lim_{a\to 1^-} \frac{\widehat{\omega}(a)}{1-a}=0.
\end{equation}
Since $M_{1,d}(\omega)<\infty$, then 
\begin{equation*}\begin{split}
\int_0^{\frac{1+a}{2}} \frac{ds}{\omg(s)} &\ge \left(M_{1,d}(\omega)\right)^{-1}\int_0^{\frac{1+a}{2}} \left(\int_{0}^s\frac{dt}{\omg(t)}\right) \frac{ds}{1-s}
\\ &\ge \left(M_{1,d}(\omega)\right)^{-1}\int_{\frac{1}{2}}^{\frac{1+a}{2}} \left(\int_{0}^s\frac{dt}{\omg(t)}\right) \frac{ds}{1-s}
\\ &\ge \left(M_{1,d}(\omega)\right)^{-1} \left(\int_{0}^{\frac{1}{2}}\frac{dt}{\omg(t)}\right) \int_{\frac{1}{2}}^{\frac{1+a}{2}}  \frac{ds}{1-s}
\\ & = \left(M_{1,d}(\omega)\right)^{-1}\left(\int_{0}^{\frac{1}{2}}\frac{dt}{\omg(t)}\right) \log\frac{1}{1-a}, \quad 0<a<1.
\end{split}\end{equation*}
So $\lim_{a\to 1^-} \int_0^{a} \frac{ds}{\omg(s)}=\infty$, and then using again the condition $M_{1,d}(\omega)<\infty$, \eqref{eq:compact1} holds.

From now on, the proof follow the lines of Lemma~\ref{Conv compact}.
Let $\varepsilon>0$.  
By \eqref{eq:compact1}  there exists $0<\rho_0<1$ such that $ \widetilde{\om}(t) < \varepsilon $ for any $t\in [\rho_0,1)$. Moreover,  there exists $k_0$ such that for every $k\ge k_0$ and $z\in M=\overline{D(0,\rho_0)}$, $|f_k(z)|<\varepsilon$. Then, by Lemma \ref{le:descriptionDDD}  and  \eqref{eq:intparts} 
\begin{align*}
\int_0^1|f_k(t)|\om(t)dt& \leq |f_k(0)|\omg (0) + \int_0^1 M_{\infty}(t,f_k)\widetilde{\om}(t)dt
\\&\lesssim
 \int_0^{\rho_0} M_{\infty}(t,f_k)\widetilde{\om}(t)dt + \int_{\rho_0}^1 M_{\infty}(t,f_k)\widetilde{\om}(t)dt
\\  &\le \varepsilon \left(\int_0^{1}\widetilde{\om}(t)dt + \sup_{k\in\N} \|f_k\|_{X_1}\right)
=C\varepsilon,
\end{align*}
where in the second to last  step we have used Lemma \ref{HLP C LP}.

The proof of (ii) is analogous to that of Lemma~\ref{Conv compact} so we omit its proof.

\end{proof}

Using the previous lemma and Theorem~\ref{bounded X_1} we obtain the following by mimicking the proof of Theorem~\ref{compactXp}.

\begin{theorem}\label{compactX1}
Let $\omega$ be a radial weight and $X_1, Y_1 \in \{ H(\infty,1),D^1_{0}, H^1, HL(1)\}$ and let $T\in\{H_\omega, \widetilde{H_\omega}\}$. Then, the following assertions are equivalent:
\begin{enumerate}
\item[\rm(i)]
  $T: X_1 \to Y_1$ is compact;
  \item[\rm(ii)] For every sequence $\{f_k\}_{k=0}^\infty\subset X_1 $ such that $\sup\limits_{k\in\N} \|f_k\|_{X_1}<\infty$ and $f_k\to 0$ uniformly on compact subsets of $\D$,   $\lim\limits_{k\to\infty} \|T(f_k)\|_{Y_1} =0$.
  \end{enumerate}
\end{theorem}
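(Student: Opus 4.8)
The plan is to follow verbatim the scheme of the proof of Theorem~\ref{compactXp}, replacing Lemma~\ref{Conv compact} by its $p=1$ counterpart Lemma~\ref{Conv compactX1} and Theorem~\ref{bounded Xp} by Theorem~\ref{bounded X_1}. The only structural input that changes is the hypothesis needed to run Lemma~\ref{Conv compactX1}: instead of $\|\widetilde{\omega}\|_{L^{p'}_{[0,1)}}<\infty$ one needs $\omega\in\DDD$ together with $M_{1,d}(\omega)<\infty$, and both will be supplied automatically once we know that $T$ is bounded.

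For (i)$\Rightarrow$(ii), I would argue by contradiction. Let $\{f_k\}\subset X_1$ be bounded with $f_k\to 0$ uniformly on compact subsets of $\D$, and suppose there are $\varepsilon>0$ and a subsequence with $\|T(f_{n_k})\|_{Y_1}>\varepsilon$. Compactness of $T$ yields a further subsequence and some $g\in Y_1$ with $\|T(f_{n_{k_j}})-g\|_{Y_1}\to 0$. Since $T$ is compact it is in particular bounded, so Theorem~\ref{bounded X_1} gives $\omega\in\DDD$ and $M_{1,c}(\omega)<\infty$, equivalently $M_{1,d}(\omega)<\infty$ by Lemma~\ref{le:m1}; hence the hypotheses of Lemma~\ref{Conv compactX1} are met. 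Part (ii) of that lemma shows $T(f_{n_{k_j}})\to 0$ uniformly on compact subsets of $\D$, and because convergence in any of the norms of $Y_1\in\{H(\infty,1),D^1_0,H^1,HL(1)\}$ dominates uniform convergence on compact sets (which for all four spaces reduces, via Lemma~\ref{HLP C LP}, to the elementary estimate $M_\infty(\rho,f)\le (1-\rho)^{-1}\|f\|_{H(\infty,1)}$), the two limits must agree, so $g=0$. Then $\|T(f_{n_{k_j}})\|_{Y_1}\to 0$, contradicting the choice of the subsequence.

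For (ii)$\Rightarrow$(i), let $\{f_n\}\subset X_1$ be bounded. By Montel's theorem there are a subsequence $\{f_{n_k}\}$ and $f\in\H(\D)$ with $f_{n_k}\to f$ uniformly on compact subsets of $\D$. Setting $g_{n_k}=f_{n_k}-f$, the sequence $\{g_{n_k}\}$ is bounded in $X_1$ and tends to $0$ uniformly on compact subsets, so hypothesis (ii) forces $\|T(g_{n_k})\|_{Y_1}\to 0$, that is, $T$ is compact.

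The main obstacle, such as it is, lies entirely in confirming that the quantitative hypotheses of Lemma~\ref{Conv compactX1} are available in the direction (i)$\Rightarrow$(ii): one must pass from compactness to boundedness and then invoke Theorem~\ref{bounded X_1} together with the equivalence $M_{1,c}(\omega)<\infty\Leftrightarrow M_{1,d}(\omega)<\infty$ from Lemma~\ref{le:m1} to guarantee $\omega\in\DDD$ and $M_{1,d}(\omega)<\infty$. Once these are in hand, the compactness-to-uniform-convergence mechanism is identical to the $1<p<\infty$ case, so no genuinely new estimate is required and the whole argument is a routine transcription of the proof of Theorem~\ref{compactXp}.
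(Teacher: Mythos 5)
Your proposal is correct and is essentially the paper's own proof: the paper proves Theorem~\ref{compactX1} precisely by mimicking the proof of Theorem~\ref{compactXp}, with Lemma~\ref{Conv compactX1} replacing Lemma~\ref{Conv compact} and Theorem~\ref{bounded X_1} (plus Lemma~\ref{le:m1}) supplying the hypotheses $\omega\in\DDD$ and $M_{1,d}(\omega)<\infty$. Your explicit justification that norm convergence in $Y_1$ forces $g=0$ (via $M_\infty(\rho,f)\le(1-\rho)^{-1}\|f\|_{H(\infty,1)}$ and Lemma~\ref{HLP C LP}) is a detail the paper leaves implicit, but it is the intended argument.
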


\begin{theorem}\label{pr:nocompactx1}
Let $\omega$ be a radial weight and  $X_1,Y_1\in \{ H(\infty,1),D^1_{0}, H^1, HL(1)\}$, and let $T\in \{\Ho, \widetilde{\Ho}\}$. Then, 
$T:X_1\to Y_1$ is not compact.
\end{theorem}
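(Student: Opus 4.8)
The plan is to argue by contradiction, following the scheme of Theorem~\ref{pr:nocompactxp} for $1<p<\infty$ but replacing the norm-equivalence Lemma~\ref{le:Htildeequiv} (unavailable at $p=1$) by a direct comparison of norms for functions with nonnegative Taylor coefficients. Suppose $T:X_1\to Y_1$ is compact, and consider the test functions
\[
f_a(z)=\frac{1-a^2}{(1-az)^2}=\sum_{n=0}^\infty (1-a^2)(n+1)a^n z^n,\quad 0<a<1,
\]
whose coefficients $\widehat{f_a}(n)=(1-a^2)(n+1)a^n$ are nonnegative. A direct computation gives $\|f_a\|_{HL(1)}=1+a$, $\|f_a\|_{H^1}=1$ and $\|f_a\|_{D^1_0}\asymp 1$, while Lemma~\ref{HLP C LP} yields $\|f_a\|_{H(\infty,1)}\lesssim 1$; hence $\|f_a\|_{X_1}\asymp 1$ uniformly in $a$. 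Moreover $f_a\to 0$ as $a\to 1^-$ uniformly on compact subsets of $\D$, and since $\widehat{f_a}(n)\ge 0$ we have $\Ho(f_a)=\widetilde{\Ho}(f_a)$, so it suffices to treat $T=\Ho$.

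First I would establish the uniform lower bound $\|\Ho(f_a)\|_{HL(1)}\gtrsim 1$, exactly as in \eqref{eq:faco}. Denoting by $c_n=\frac{1}{2(n+1)\om_{2n+1}}\sum_{k\ge 0}\widehat{f_a}(k)\om_{n+k}$ the Taylor coefficients of $\Ho(f_a)$, keeping only the terms $0\le k\le n$, and using the monotonicity of the moments (so that $\om_{n+k}\ge\om_{2n}\ge\om_{2n+1}$) together with $\sum_{k=0}^n(k+1)a^k\gtrsim a^n(n+1)^2$, one obtains
\begin{align*}
\|\Ho(f_a)\|_{HL(1)}=\sum_{n=0}^\infty\frac{c_n}{n+1}
&\gtrsim (1-a^2)\sum_{n=0}^\infty\frac{1}{(n+1)^2}\sum_{k=0}^n(k+1)a^k\\
&\gtrsim(1-a^2)\sum_{n=0}^\infty a^n=1+a\gtrsim 1.
\end{align*}
The decisive point is that this single estimate governs all four target spaces. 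For $Y_1\in\{H^1,D^1_0\}$ the inclusions \eqref{eq:HpDpp<2} give $\|\Ho(f_a)\|_{Y_1}\gtrsim\|\Ho(f_a)\|_{HL(1)}$, whereas for $Y_1=H(\infty,1)$ the nonnegativity of the coefficients of $\Ho(f_a)$ forces the exact identity $\|\Ho(f_a)\|_{H(\infty,1)}=\int_0^1 M_\infty(r,\Ho(f_a))\,dr=\sum_{n\ge 0}\frac{c_n}{n+1}=\|\Ho(f_a)\|_{HL(1)}$. In every case $\|T(f_a)\|_{Y_1}\gtrsim 1$ uniformly in $a$.

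It then remains to derive the contradiction. Choosing $a_k=1-\frac1k$ produces a sequence bounded in $X_1$ and tending to $0$ uniformly on compact subsets of $\D$; since $T$ is assumed compact, the characterization in Theorem~\ref{compactX1} forces $\|T(f_{a_k})\|_{Y_1}\to 0$, contradicting the lower bound just obtained. This proves that $T:X_1\to Y_1$ cannot be compact.

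I expect the main obstacle to be the transfer of the lower bound to the target $Y_1=H(\infty,1)$. For $1<p<\infty$ this was handled through Lemma~\ref{le:Htildeequiv}, which asserts that all four $X_p$-norms of $\widetilde{H_\omega}(f)$ are comparable; at $p=1$ no such statement is available, and indeed \eqref{fejer}-type control runs only one way. The remedy is the observation that for a function with nonnegative Taylor coefficients the $H(\infty,1)$- and $HL(1)$-norms are literally equal, which—combined with the one-sided inclusions \eqref{eq:HpDpp<2}—lets the single computation $\|\Ho(f_a)\|_{HL(1)}\gtrsim 1$ suffice for every choice of $Y_1$. Notably, this route never invokes $\om\in\DD$, so the lower bound holds for an arbitrary radial weight.
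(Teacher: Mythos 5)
Your proposal is correct and follows essentially the same route as the paper's proof: the same test functions $f_a(z)=\frac{1-a^2}{(1-az)^2}$, the same lower bound $\|\Ho(f_a)\|_{HL(1)}\gtrsim 1$ obtained by taking $p=1$ in \eqref{eq:faco}, the same use of nonnegative Taylor coefficients to identify the $H(\infty,1)$- and $HL(1)$-norms of $\Ho(f_a)$, and the same contradiction via Theorem~\ref{compactX1}. The only (cosmetic) difference is that the paper reduces every target norm to $\|\cdot\|_{H(\infty,1)}$ through Lemma~\ref{HLP C LP}, whereas you reduce $H^1$ and $D^1_0$ to $HL(1)$ through \eqref{eq:HpDpp<2} and treat $H(\infty,1)$ by the exact coefficient identity; both reductions rest on the same one-sided embeddings and the same nonnegativity observation.
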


\begin{proof}
The proof is analogous to that of Theorem~\ref{pr:nocompactxp}, so we provide a sketch of the proof.
Assume that $T:X_1\to Y_1$ is compact. For each $0<a<1$, set
$
f_a(z)=\frac{1-a^2}{(1-az)^2},\, z\in\D. $
A calculation shows that $\sup_{a\in (0,1)}\Vert f_a\Vert_{X_1}\asymp 1$ and
 $f_a\to 0$, as $a\to1$ uniformly on compact subsets of $\D$.
 Moreover, since $T(f_a)$ has non-negative Taylor coefficients, 
    $$\|T(f_a)\|_{Y_1}\gtrsim \|\Ho(f_a)\|_{H(\infty,1)}\asymp \|\Ho(f_a)\|_{HL(1)}\gtrsim 1,\quad a\in (0,1),$$
    where  the last inequality follows taking $p=1$ in \eqref{eq:faco}.
So, using  Theorem~\ref{compactX1} we deduce  that $T: X_1\to Y_1$ is not a compact operator.
\end{proof}

\section{Hilbert-type operators acting on $H^{\infty}$}\label{sec:infty}

We will prove a  result which includes Theorem~\ref{th:H infty Bloch intro}. With this aim 
we need some more notation. 
 The space $Q\sb
p$,\, $0\le p<\infty$, consists of those $f\in H(\D)$ such that 
\[
\| f\|^2_{Q_p}=|f(0)|^2+\sup\sb{a\in\D} \int\sb\D |f'(z)|\sp 2 (1-|\varphi_a(z)|^2)^p\,dA(z)<\infty \,,
\]
where $\varphi_a(z)=
 \frac{a-z}{1-\overline{a} z}, \; z, a
\in\D.$
If $p>1$, $Q_p$ 
coincides with the Bloch space $\B$.
 The space $Q\sb 1$
coincides with $BMOA$ (see, e.~g., \cite[Theorem~5.2]{GiBMO}). However, if
$0<p<1 $, $Q\sb p$ is a proper subspace of $BMOA $ \cite{X2}. The space $Q_0$ reduces to the classical Dirichlet
space $\mathcal{D}$.

We recall that
\begin{equation}\begin{split}\label{Qp:embeddings}
& Q_p\subsetneq \BMOA\subsetneq \B,\quad 0<p<1.
\\ & H^\infty \subsetneq \BMOA\subsetneq \B,
\end{split}\end{equation}
however if $0<p<1$, $H^\infty\not\subset Q_p$, and $Q_p \not\subset H^\infty$, see \cite{X2}.

Moreover, $HL(\infty)\subsetneq  Q_p$.  
 This embedding might have been proved in some previous paper, however we include a short direct proof for the sake of completeness.
\begin{lemma}\label{le:HlinftyQpembedding}
Let $0<p\le \infty$, then $HL(\infty)\subsetneq Q_p$ and 
$$\|f\|_{Q_p}\lesssim \|f\|_{HL(\infty)},\quad f\in\H(\D).$$
\end{lemma}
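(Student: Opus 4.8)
The plan is to split the range of $p$ into the three regimes $0<p<1$, $p=1$, and $p>1$ (the endpoint $p=\infty$ being read as $Q_\infty:=\B$, in accordance with the fact recorded above that $Q_p=\B$ for $p>1$), the single unifying ingredient being a coefficient estimate on the $L^2$-means of $f'$. Writing $C=\|f\|_{HL(\infty)}$, so that $|\fg(n)|\le C/(n+1)$, Parseval's identity gives
\[
M_2^2(r,f')=\sum_{m=0}^\infty (m+1)^2|\fg(m+1)|^2 r^{2m}\le C^2\sum_{m=0}^\infty r^{2m}=\frac{C^2}{1-r^2}\lesssim \frac{C^2}{1-r},\qquad 0\le r<1.
\]
First I would dispose of the easy range $p>1$ and $p=\infty$: since $Q_p=\B$ there, it suffices to bound the Bloch seminorm, and the cruder pointwise bound $|f'(z)|\le \sum_{n\ge 1} n|\fg(n)||z|^{n-1}\le C/(1-|z|)$ yields $(1-|z|^2)|f'(z)|\lesssim C$, whence $\|f\|_{\B}\lesssim \|f\|_{HL(\infty)}$.

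For the remaining range $0<p\le 1$ I would invoke the Carleson-box description of $Q_p$: for $0<p<1$ one has $f\in Q_p$ exactly when $d\mu_f(z)=|f'(z)|^2(1-|z|^2)^p\,dA(z)$ is a $p$-Carleson measure, while for $p=1$ the space $Q_1=\BMOA$ is characterized by $\mu_f$ being a classical Carleson measure (cf.\ \cite{X2,GiBMO}). To verify the relevant box condition I would estimate the integral over the Carleson box $S(I)$ of an arc $I$ of length $h$ by enlarging the angular integration to the full circle, so that it is controlled by the means $M_2(r,f')$:
\[
\int_{S(I)}|f'(z)|^2(1-|z|^2)^p\,dA(z)\lesssim \int_{1-h}^1 M_2^2(r,f')\,(1-r)^p\,dr\lesssim C^2\int_{1-h}^1 (1-r)^{p-1}\,dr=\frac{C^2}{p}\,h^p.
\]
As this holds uniformly in $I$, the measure $\mu_f$ is a $p$-Carleson measure (classical Carleson measure when $p=1$) with constant $\lesssim C^2/p$; combined with $|f(0)|=|\fg(0)|\le C$ this gives $\|f\|_{Q_p}\lesssim \|f\|_{HL(\infty)}$, completing the embedding for every $0<p\le\infty$.

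The step I expect to be the crux is precisely the passage to $p<1$: here the Bloch bound is worthless (indeed $\B\not\subset Q_p$), so one must exploit the square-summability encoded in $M_2(r,f')$ rather than the sup bound, and the blow-up of the constant $C^2/p$ is unavoidable — it matches the failure of the inclusion at $p=0$, where $Q_0=\mathcal D$ and $\sum_n n|\fg(n)|^2$ may diverge for $f\in HL(\infty)$, which is exactly why the hypothesis $p>0$ is needed. Finally, for the strictness I would exhibit the lacunary series $g(z)=\sum_{k\ge1} k^{-1}z^{2^k}$: its coefficients are square-summable, so $g\in\BMOA\subset Q_p$ for $p\ge1$ and, by the lacunary criterion for membership in $Q_p$ (see \cite{X2}), $g\in Q_p$ for all $0<p<1$ as well; yet $(2^k+1)|\fg(2^k)|=(2^k+1)/k\to\infty$, so $g\notin HL(\infty)$, which yields $HL(\infty)\subsetneq Q_p$ for every $0<p\le\infty$.
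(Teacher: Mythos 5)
The first half of your argument, the inequality $\|f\|_{Q_p}\lesssim\|f\|_{HL(\infty)}$, is correct, and it takes a genuinely different route from the paper. The paper runs a single computation valid for every $0<p<\infty$: after the same coefficient estimate $M_2^2(s,f')\le\|f\|^2_{HL(\infty)}/(1-s^2)$, it bounds $1-|\varphi_a(z)|^2$ on the circle $|z|=s$ by $(1-|a|)(1-s^2)/(1-|a|s)^2$ and is left with the elementary uniform bound $\sup_{a\in\D}\int_0^1 (1-|a|)^p(1-s^2)^{p-1}(1-|a|s)^{-2p}\,ds<\infty$; no case distinction and no external characterization of $Q_p$ is needed. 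You instead split the range: the Bloch bound for $p>1$ (fine, granting the standard norm equivalence behind ``$Q_p$ coincides with $\B$'') and, for $0<p\le1$, the Carleson-box characterization of $Q_p$ from \cite{X2}. Your box estimate is sound, but it imports the nontrivial theorem that $f\in Q_p$ if and only if $|f'(z)|^2(1-|z|^2)^p\,dA(z)$ satisfies $\mu(S(I))\lesssim|I|^p$ with norm equivalence, whereas the paper's computation is self-contained; note also that ``$p$-Carleson measure'' in this geometric sense is not the same notion that the paper's introduction attaches to those words.

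The strictness part, however, has a genuine gap: your example $g(z)=\sum_{k\ge1}k^{-1}z^{2^k}$ does \emph{not} belong to $Q_p$ for any $0<p<1$, so it proves $HL(\infty)\subsetneq Q_p$ only for $1\le p\le\infty$. The lacunary criterion you invoke (see \cite{X2}) says that a Hadamard gap series $\sum_k a_kz^{n_k}$ lies in $Q_p$ if and only if $\sum_k|a_k|^2n_k^{1-p}<\infty$; for your $g$ this series is $\sum_k k^{-2}2^{k(1-p)}$, which diverges for every $p<1$ since the exponential factor beats the polynomial one. Square-summability of lacunary coefficients characterizes membership in $\BMOA=Q_1$, not in the strictly smaller spaces $Q_p$, $p<1$, which is precisely why $Q_p\subsetneq\BMOA$ there. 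The repair is to take coefficients of size essentially $2^{-k}$ times an unbounded, slowly growing factor: the paper uses $f(z)=\sum_{k\ge0}2^{-k}\log(k+2)\,z^{2^k}$, for which $\sum_k|a_k|^22^{k(1-p)}=\sum_k\log^2(k+2)\,2^{-k(1+p)}<\infty$ for every $p>0$, so $f\in\bigcap_{p>0}Q_p$, while $(2^k+1)|\widehat{f}(2^k)|\asymp\log(k+2)\to\infty$ gives $f\notin HL(\infty)$. With that substitution your proof covers the full range $0<p\le\infty$.
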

\begin{proof}
Let $f\in HL(\infty)$, then
\begin{align*}
M_2^2(\rho,f^\prime)= \sum_{n=1}^\infty n^2|\fg (n)|^2 \rho^{2n-2}\le \|f\|_{HL(\infty)}^2\sum_{n=1}^\infty\rho^{2n-2}=\frac{\|f\|_{HL(\infty)}^2}{1-\rho^2},
\end{align*}
So for any $0<p<\infty$,
\begin{align*}
\sup_{a\in\D}\int_\D (1-|\varphi_a(z)|^2)^p|f^\prime(z)|^2\,dA(z) &\lesssim
\sup_{a\in\D} \int_0^1 \left(\frac{(1-|a|)(1-s^2)}{(1-|a|s)^2}\right)^p M_2^2(s,f^\prime)\,ds
\\ &\le \|f\|_{HL(\infty)}^2 \sup_{a\in\D}\int_0^1 \frac{(1-|a|)^p(1-s^2)^{p-1}}{(1-|a|s)^{2p}}\, ds
\\ &\lesssim \|f\|_{HL(\infty)}^2,
\end{align*}
so $\|f\|_{Q_p}\lesssim\|f\|_{HL(\infty)}$. 
The lacunary series $f(z)=\sum_{k=0}^\infty  2^{-k}\log{(k+2)}z^{2^k}\in \bigcap\limits_{0<p} Q_p\setminus HL(\infty)$. This finishes the proof.
\end{proof}

Now we will prove the main result of this section, which is an extension of Theorem~\ref{th:H infty Bloch intro}.
\begin{theorem}
\label{th:H infty Bloch}
Let $\om $ be a radial weight and let $T \in \{H_\omega, \widetilde{H_\omega}\}$. Then, the following statements are equivalent:
\begin{itemize}
\item[(i)] $T: H^{\infty}\to HL(\infty)$ is bounded;
\item[(ii)]  $T: H^{\infty}\to Q_p$ is bounded for $0<p<1$;
\item[(iii)] $T: H^{\infty}\to \BMOA$ is bounded;
\item[(iv)]  $T: H^{\infty}\to \B$ is bounded;
\item [(v)] $\om \in \DD$.
\end{itemize}
\end{theorem}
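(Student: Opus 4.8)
The plan is to establish the cyclic chain of implications (i)$\Rightarrow$(ii)$\Rightarrow$(iii)$\Rightarrow$(iv)$\Rightarrow$(v)$\Rightarrow$(i), in which the first three arrows come essentially for free from the embeddings gathered above. Indeed, Lemma~\ref{le:HlinftyQpembedding} together with \eqref{Qp:embeddings} provides, for $0<p<1$, the continuous inclusions $HL(\infty)\subset Q_p\subset\BMOA\subset\B$, i.e. the norm inequalities $\|g\|_{Q_p}\lesssim\|g\|_{HL(\infty)}$, $\|g\|_{\BMOA}\lesssim\|g\|_{Q_p}$ and $\|g\|_{\B}\lesssim\|g\|_{\BMOA}$. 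Applying each of these to $g=T(f)$ immediately upgrades boundedness of $T$ into one space to boundedness into the next larger one, so (i)$\Rightarrow$(ii)$\Rightarrow$(iii)$\Rightarrow$(iv). The arrow (iv)$\Rightarrow$(v) is exactly \cite[Theorem~1]{PelRosa21} (the equivalence (iii)$\Leftrightarrow$(iv) of Theorem~\ref{th:H infty Bloch intro}), which is stated for both $T=\Ho$ and $T=\widetilde{\Ho}$. Hence the whole theorem reduces to proving (v)$\Rightarrow$(i).

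For that last implication I would first read off the Taylor coefficients of $T(f)$. Integrating \eqref{eq:B} gives
$$K_t^\omega(z)=\frac1z\int_0^z B_t^\omega(u)\,du=\sum_{n=0}^\infty\frac{t^n z^n}{2(n+1)\om_{2n+1}},$$
so that
$$\widetilde{H_\omega}(f)(z)=\sum_{n=0}^\infty\left(\int_0^1|f(t)|t^n\om(t)\,dt\right)\frac{z^n}{2(n+1)\om_{2n+1}},$$
and the coefficients of $\Ho(f)$ are dominated in modulus by those of $\widetilde{H_\omega}(f)$. Since the norm $\|g\|_{HL(\infty)}=\sup_{n}(n+1)|\widehat g(n)|$ only sees moduli of coefficients, it suffices to bound $\widetilde{H_\omega}$, for which
$$\|\widetilde{H_\omega}(f)\|_{HL(\infty)}=\sup_{n\ge0}\frac{1}{2\om_{2n+1}}\int_0^1|f(t)|t^n\om(t)\,dt.$$

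The final step uses $\om\in\DD$ to control the right-hand side. For $f\in H^\infty$ one has $\int_0^1|f(t)|t^n\om(t)\,dt\le\|f\|_{H^\infty}\,\om_n$, so the problem reduces to showing $\sup_n\om_n/\om_{2n+1}<\infty$. Here Lemma~\ref{caract. pesos doblantes}(iv), applied with $x=2n$ and $y=2n+1$, together with the monotonicity $\om_{2n+1}\le\om_{2n}$ of the moments, yields $\om_{2n}\asymp\om_{2n+1}$; then Lemma~\ref{caract. pesos doblantes}(v) gives $\om_n/\om_{2n+1}\asymp\om_n/\om_{2n}\le\DD(\om)<\infty$. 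This produces $\|T(f)\|_{HL(\infty)}\lesssim\|f\|_{H^\infty}$ and closes the cycle.

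I do not expect a genuine obstacle in this argument; its whole point is that all the analytic content is concentrated in the elementary $HL(\infty)$ endpoint computation and in the embedding $HL(\infty)\subset Q_p$ of Lemma~\ref{le:HlinftyQpembedding}, so that one never has to estimate the $\BMOA$ or Bloch norm of $T(f)$ directly. The only places requiring care are the bookkeeping that the forward embedding implications hold verbatim for each of $T=\Ho$ and $T=\widetilde{\Ho}$, and that the quoted \cite[Theorem~1]{PelRosa21} indeed supplies (iv)$\Rightarrow$(v) for both operators; both are routine once the coefficient domination of $\Ho(f)$ by $\widetilde{H_\omega}(f)$ is recorded.
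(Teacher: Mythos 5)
Your proposal is correct and follows essentially the same route as the paper: the implications (i)$\Rightarrow$(ii)$\Rightarrow$(iii)$\Rightarrow$(iv) via \eqref{Qp:embeddings} and Lemma~\ref{le:HlinftyQpembedding}, the implication (iv)$\Rightarrow$(v) by citing \cite[Theorem~1]{PelRosa21}, and (v)$\Rightarrow$(i) by dominating the Maclaurin coefficients of $\Ho(f)$ by those of $\widetilde{\Ho}(f)$ and using $\int_0^1|f(t)|t^n\om(t)\,dt\le\|f\|_{H^\infty}\om_n$ together with $\sup_n\om_n/\om_{2n+1}<\infty$ from Lemma~\ref{caract. pesos doblantes}. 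The only cosmetic difference is that you spell out the moment comparison $\om_{2n}\asymp\om_{2n+1}$ explicitly, whereas the paper absorbs it into a single appeal to Lemma~\ref{caract. pesos doblantes}.
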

\begin{Prf} {\em{Theorem~\ref{th:H infty Bloch}.}}

By Lemma~\ref{caract. pesos doblantes},
\begin{equation*}\begin{split}
\|\Ho(f)\|_{HL(\infty)} &= \sup\limits_{k\in \N\cup\{0\}} (k+1)\left| \frac{\int_0^1 f(t)t^k \om(t)dt}{2(k+1)\om_{2k+1}}\right|
\\ &\le \sup\limits_{k\in \N\cup\{0\}} (k+1) \frac{\int_0^1 |f(t)|t^k \om(t)dt}{2(k+1)\om_{2k+1}}
\\ &=\|\widetilde{\Ho}(f)\|_{HL(\infty)}
\lesssim \|f\|_{H^{\infty}},
\end{split}\end{equation*}
so (v)$\Rightarrow $ (i).
The implications (i)$\Rightarrow $(ii)$\Rightarrow $(iii)$\Rightarrow $(iv)
 follow from
\eqref{Qp:embeddings} and Lemma~\ref{le:HlinftyQpembedding}.

\par The implication (iv)$\Rightarrow $(v)  was proved in \cite[Theorem~1]{PelRosa21}, and this finishes the proof.

\end{Prf}

\par It is worth mentioning that for
$f(z)=\log\frac{1}{1-z}\in HL(\infty)$ and $\omega$  a radial weight,
\begin{equation*}\begin{split}
H_\omega(f)'(x)&=\sum_{n=1}^\infty\frac{n}{2(n+1)\omega_{2n+1}}\left(\sum_{k=1}^\infty \frac{\om_{n+k}}{k} \right)x^{n-1}
\\ & \ge \sum_{n=1}^\infty\frac{n}{2(n+1)}\left(\sum_{k=1}^n\frac{1}{k} \right)x^{n-1}
\\ & \gtrsim \sum_{n=1}^\infty \log(n+1) x^{n-1},
\end{split}\end{equation*}
so $H_\omega(f)\notin \B$.
So, the space $H^\infty$ cannot be replaced by $HL(\infty)$ and by any $Q_p$ space, $0<p<\infty$, in the statement of
Theorem~\ref{th:H infty Bloch}. That is,
the remaining cases for $p=\infty$, analogous to those of 
 Theorems~\ref{bounded Xp} and \ref{bounded X_1}, which do not appear in Theorem~\ref{th:H infty Bloch}, simply do not hold for any radial weight.

\vspace{1em}
Finally,  we will prove the analogous result to Theorem~\ref{pr:nocompactxp} for $p=\infty$.
\begin{theorem}\label{th:nocompactoHinfty}
Let $\omega$ be a radial weight and let $T \in \{H_\om , \widetilde{H_\omega}\}$. Then $T:H^{\infty}\to Y_\infty$ is not a compact operator, where $Y_\infty\in\{Q_p,\B,\BMOA, HL(\infty )\}$, $0<p<1$.
\end{theorem}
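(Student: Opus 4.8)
The plan is to argue by contradiction, following the scheme of Theorems~\ref{pr:nocompactxp} and \ref{pr:nocompactx1}: I would exhibit a sequence that is bounded in $H^\infty$ and tends to $0$ uniformly on compact subsets of $\D$, yet whose image under $T$ has $Y_\infty$-norm bounded away from $0$. Since the functions $\frac{1-a^2}{(1-az)^2}$ used for $1\le p<\infty$ are not bounded in $H^\infty$, I would replace them by the monomials $f_N(z)=z^N$, which satisfy $\|f_N\|_{H^\infty}=1$ and $f_N\to 0$ uniformly on compact subsets of $\D$. As $f_N\ge 0$ on $[0,1)$ we have $\widetilde{H_\omega}(z^N)=H_\omega(z^N)$, so it suffices to treat $T=H_\omega$. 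Recall finally that a compact operator is bounded, so if $T$ were compact one could apply $T$ freely to these functions and the images would lie in $Y_\infty$.

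For the lower bound it is enough to estimate the Bloch norm from below, since each of $Q_p$, $\BMOA$ and $HL(\infty)$ embeds continuously into $\B$ (by \eqref{eq:inftyintro}, \eqref{Qp:embeddings} and Lemma~\ref{le:HlinftyQpembedding}), whence $\|T(z^N)\|_{Y_\infty}\gtrsim \|T(z^N)\|_{\B}$. A direct computation using \eqref{eq:B} gives
$$H_\omega(z^N)(z)=\sum_{n=0}^\infty \frac{\om_{N+n}}{2(n+1)\om_{2n+1}}\,z^n,$$
and, all coefficients being nonnegative and using that $x\mapsto\om_x$ is non-increasing so that $\om_{N+n}\ge \om_{2n+1}$ whenever $n\ge N$,
$$(1-r^2)\,H_\omega(z^N)'(r)\ge \frac{1-r^2}{4}\sum_{n=N}^\infty r^{n-1}=\frac{(1+r)\,r^{N-1}}{4},\quad 0<r<1.$$
Evaluating at $r=1-\tfrac1N$ yields $\|H_\omega(z^N)\|_{\B}\gtrsim 1$ uniformly in $N$. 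The pleasant feature is that this estimate requires no hypothesis on $\om$ (not even $\om\in\DD$): the crude monotonicity of the moments replaces the delicate weight estimates needed in the cases $1\le p<\infty$.

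To close the argument I would assume $T:H^\infty\to Y_\infty$ compact. Since $\{z^N\}$ is bounded in $H^\infty$, the sequence $\{T(z^N)\}$ has a subsequence $\{T(z^{N_k})\}$ converging in $Y_\infty$-norm to some $g\in Y_\infty$. Because point and derivative evaluations are continuous on $\B$, the continuous embedding $Y_\infty\hookrightarrow\B$ forces $T(z^{N_k})\to g$ uniformly on compact subsets of $\D$. On the other hand, arguing exactly as in Lemma~\ref{Conv compact}(ii), since $\int_0^1 t^N\om(t)\,dt=\om_N\to 0$ and $\sup_{z\in M,\,t\in[0,1)}|K_t^\omega(z)|<\infty$ for every compact $M\subset\D$ (the defining series converges geometrically for any radial weight, as $\om_{2n+1}$ cannot decay faster than geometrically), one obtains $T(z^{N_k})\to 0$ uniformly on compacta. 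Hence $g=0$ and $\|T(z^{N_k})\|_{Y_\infty}\to 0$, contradicting $\|T(z^{N_k})\|_{Y_\infty}\gtrsim 1$.

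The genuine obstacle is the passage from $Y_\infty$-norm convergence to uniform convergence on compacta, to be carried out simultaneously for the four target spaces; channelling everything through the common continuous embedding into $\B$ resolves this uniformly, while the conceptual crux is that a lower bound is available already in the \emph{largest} space $\B$, which is precisely what makes the choice $f_N=z^N$ together with the evaluation point $r=1-\tfrac1N$ effective.
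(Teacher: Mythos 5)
Your proof is correct and follows essentially the same route as the paper's: the paper also tests on the monomials $z^k$, reduces matters to the Bloch space through the embeddings \eqref{Qp:embeddings} and Lemma~\ref{le:HlinftyQpembedding}, and derives the uniform lower bound $\|T(z^k)\|_{\B}\gtrsim 1$ from the monotonicity of the moments (restricting the sum to $k\le n\le 2k$ where you instead keep the full tail $n\ge N$). Your final subsequence-extraction argument is precisely the content of the paper's Theorem~\ref{compactcharBii}, which is what the paper invokes at that point.
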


We need the following result, whose proof can be obtained by mimicking the proof  Theorem~\ref{compactXp}.

\begin{theorem}\label{compactcharBii}
Let $\omega$ be a radial weight and $T\in\{H_\omega, \widetilde{H_\omega}\}$. Then, the following assertions are equivalent:
\begin{enumerate}
\item[\rm(i)]
  $T: H^{\infty}\to \B$ is compact;
  \item[\rm(ii)] For every sequence $\{f_k\}_{k=0}^\infty\subset H^\infty$ such that $\sup\limits_{k\in\N} \|f_k\|_{H^\infty}<\infty$ and $f_k\to 0$ uniformly on compact subsets of $\D$,   $\lim\limits_{k\to\infty} \|T(f_k)\|_\B =0$.
  \end{enumerate}
\end{theorem}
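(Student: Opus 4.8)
The plan is to follow verbatim the scheme of the proof of Theorem~\ref{compactXp}, the only genuinely new ingredient being an $H^\infty$-version of the convergence Lemma~\ref{Conv compact}. Concretely, I first record the following claim: if $\{f_k\}_{k=0}^\infty\subset H^\infty$ satisfies $\sup_k\|f_k\|_{H^\infty}<\infty$ and $f_k\to 0$ uniformly on compact subsets of $\D$, then $T(f_k)\to 0$ uniformly on compact subsets of $\D$, for any radial weight $\om$ and $T\in\{H_\om,\widetilde{H_\om}\}$. Granting this claim, the equivalence (i)$\Leftrightarrow$(ii) is obtained exactly as in Theorem~\ref{compactXp}.

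To prove the claim, I first observe that $\int_0^1|f_k(t)|\om(t)\,dt\to 0$. Indeed, given $\varepsilon>0$, since $\widehat{\omega}(t)=\int_t^1\om(s)\,ds\to 0$ as $t\to1^-$ I may pick $\rho_0\in(0,1)$ with $\widehat{\omega}(\rho_0)<\varepsilon$; then using $|f_k(t)|\le\|f_k\|_{H^\infty}$ on $[\rho_0,1)$ and the uniform convergence $f_k\to0$ on $\overline{D(0,\rho_0)}$,
\begin{align*}
\int_0^1|f_k(t)|\om(t)\,dt
&\le \Big(\max_{[0,\rho_0]}|f_k|\Big)\widehat{\omega}(0)+\Big(\sup_k\|f_k\|_{H^\infty}\Big)\,\widehat{\omega}(\rho_0),
\end{align*}
which is $\le C\varepsilon$ for $k$ large. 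Writing $K_t^\om(z)=\frac1z\int_0^z B_t^\om(u)\,du$ and estimating exactly as in the proof of Lemma~\ref{Conv compact}(ii), for any compact $M\subset\overline{D(0,\rho_0)}$ one has $\sup_{z\in M,\,t\in[0,1)}|K_t^\om(z)|\le\sum_{k=0}^\infty\frac{\rho_0^k}{2(k+1)\om_{2k+1}}<\infty$; the series converges for every radial weight because $\om_{2k+1}\ge r^{2k+1}\widehat{\omega}(r)$ for each fixed $r<1$, so choosing $r$ with $\rho_0<r^2$ gives geometric decay. Hence $|T(f_k)(z)|\le\big(\sup_{z\in M,\,t}|K_t^\om(z)|\big)\int_0^1|f_k(t)|\om(t)\,dt\to0$ uniformly on $M$, proving the claim. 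Note that, unlike Lemma~\ref{Conv compact}, no hypothesis on $\om$ is needed here, since in $H^\infty$ the pointwise bound $|f_k(t)|\le\|f_k\|_{H^\infty}$ replaces the $\widetilde{\om}$-machinery.

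With the claim in hand I prove (i)$\Rightarrow$(ii). If $T$ is compact it is in particular bounded, so $\om\in\DD$ by Theorem~\ref{th:H infty Bloch}. Let $\{f_n\}$ be bounded in $H^\infty$ with $f_n\to0$ locally uniformly, and suppose toward a contradiction that $\|T(f_{n_k})\|_\B>\varepsilon$ along some subsequence. By compactness, a further subsequence satisfies $\|T(f_{n_{k_j}})-g\|_\B\to0$ for some $g\in\B$. Since convergence in $\B$ forces locally uniform convergence of both the functions (through the value at $0$ together with the control of the derivative) and their derivatives, while the claim gives $T(f_{n_{k_j}})\to0$ locally uniformly, I conclude $g(0)=0$ and $g'\equiv0$, i.e. $g=0$; this contradicts $\|T(f_{n_{k_j}})\|_\B\to\|g\|_\B=0$.

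Finally, (ii)$\Rightarrow$(i) is the Montel argument of Theorem~\ref{compactXp}: given $\{f_n\}$ bounded in $H^\infty$, I extract $f_{n_k}\to f$ locally uniformly (so $\|f\|_{H^\infty}\le\sup_n\|f_n\|_{H^\infty}$), set $g_{n_k}=f_{n_k}-f$, and apply hypothesis (ii) to the bounded, locally null sequence $\{g_{n_k}\}$ to obtain $\|T(g_{n_k})\|_\B\to0$, which yields compactness of $T$. I expect the only delicate point to be the convergence claim, together with the standard observation that $\B$-convergence implies locally uniform convergence of the derivative; everything else is a direct transcription of the proof of Theorem~\ref{compactXp}.
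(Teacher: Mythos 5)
Your proposal is correct and takes essentially the same route as the paper, which proves Theorem~\ref{compactcharBii} precisely by mimicking the proof of Theorem~\ref{compactXp}; your $H^\infty$-analogue of Lemma~\ref{Conv compact} (valid for every radial weight, since the pointwise bound $|f_k(t)|\le\|f_k\|_{H^\infty}$ and the estimate $\om_{2k+1}\ge r^{2k+1}\omg(r)$ replace the $\widetilde{\om}$-integrability hypothesis) is exactly the modification the paper's ``mimicking'' requires. The remaining two implications --- the contradiction argument via $\B$-convergence implying locally uniform convergence, and the Montel extraction --- are verbatim transcriptions of the proof of Theorem~\ref{compactXp}, at the same level of detail as the paper itself.
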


\begin{Prf}{\em{Theorem~\ref{th:nocompactoHinfty}}.}
By \eqref{Qp:embeddings} and Lemma~\ref{le:HlinftyQpembedding}, it is enough to prove that $T: H^\infty\to\B$ is not compact.
Let consider for every $k\in\N$ the function $f_k(z)=z^k$, $z\in\D$. It is clear that $\|f_k\|_{H^\infty}=1$ for every $k\in\N$ and $f_k\to 0$ uniformly on compact subsets of $\D$. Since 
$$T(f_k)(z)=\sum_{n=0}^\infty \frac{\int_0^1 f_k(t)t^n\om(t)\,dt}{2(n+1)\om_{2n+1}}z^n=\sum_{n=0}^\infty \frac{\om_{n+k}}{2(n+1)\om_{2n+1}}z^n,$$  
for any $k\ge2$
\begin{align*}
\|T(f_k)\|_{\B}&\ge\sup_{x\in(0,1)}(1-x)\sum_{n=1}^\infty \frac{n\om_{n+k}}{2(n+1)\om_{2n+1}}x^{n-1} 
\ge \frac14\sup_{x\in(0,1)}(1-x)\sum_{n=1}^\infty \frac{\om_{n+k}}{\om_{2n}}x^{n-1}
\\ & \ge \frac14\sup_{x\in(0,1)}(1-x)\sum_{n=k}^{2k} \frac{\om_{n+k}}{\om_{2n}}x^{n-1}
\ge \frac14\sup_{x\in(0,1)}(1-x)\sum_{n=k}^{2k}x^{n-1}
\\ & \ge  \frac14\sup_{x\in(0,1)}(1-x)kx^{2k-1}
\ge \frac18\left(1-\frac{1}{2k}\right)^{2k-1}\ge \frac18\inf_{m\ge2}\left(1-\frac{1}{m}\right)^{m}\ge C>0
\end{align*}
so $\lim\limits_{k\to\infty}\|T(f_k)\|_\B\neq0$ and hence, by Theorem \ref{compactcharBii}, $T: H^\infty\to\B$ is not compact.
\end{Prf}

\par Before ending this section, we  briefly compare the action of the Hilbert-type operator $H_\omega$ and the   Bergman projection  \begin{equation*}
    P_\om(f)(z)=\int_{\D}f(\z) \overline{B^\om_{z}(\z)}\,\om(\z)dA(\z),
    \end{equation*}
    induced by a radial weight $\omega$.
As a consequence of  Theorem~\ref{th:H infty Bloch} and \cite[Theorem~1]{PR19}, the condition $\om \in \DD$ characterizes the boundedness of the operators $H_{\om}:H^{\infty}\to \B$ and $P_{\om}:L^{\infty}\to \B$.
Moreover,  $P_{\om}:L^{\infty}\to \B$ is bounded and onto if and only if $\om \in \DDD$ \cite[Theorem~3]{PR19}.
So, it is natural to think about the radial weights such that  the operator $H_{\om}:H^{\infty}\to \B$ is bounded and onto.
A straightforward argument proves
 there is no radial weights such that $H_{\om}:H^{\infty}\to \B$ satisfies both properties: 
If $H_{\om}:H^{\infty}\to \B$ is bounded, Theorem \ref{th:H infty Bloch} yields that $H_{\om}:H^{\infty}\to \BMOA$ is also bounded, so if $g\in \B\setminus\BMOA$, \textit{e.g.} $g(z)=\sum\limits_{k=0}^\infty z^{2^k}$, there does not exist $f\in H^\infty$ such that $\Ho(f)=g$. Consequently, 
$H_{\om}:H^{\infty}\to \B$ is not surjective.

\section{Comparisons and reformulations of the $M_{p,c}$-conditions. }\label{sec:q<p}

In order to prove Theorem~\ref{th:q<p} we will study the relationship between  some of the conditions 
which describe the boundedness of the Hilbert-type operators $H_\omega$ and $\widetilde{H_\omega}$ from $X_p$ to $Y_p$,
and $X_q$ to $Y_q$.

\begin{Prf}{\em{Theorem~\ref{th:q<p}.}}

Firstly, assume $1<q<p<\infty$. Since $T:X_q\to Y_q$ is bounded, Theorem~\ref{bounded Xp} yields $\omega\in\DD$ and $M_{q,c}(\om)<\infty$, and as a consequence, $K_{q,c}(\om)<\infty$.  By using Lemma \ref{caract. pesos doblantes}, 
\begin{equation}\label{eq:r1}
1+\int_0^r \frac{ds}{\omg(s)^q}\gtrsim \frac{1-r}{\omg(r)^q},\quad 0\le r<1.
\end{equation}
On the other hand, H\"older's inequality with exponents $x=\frac{q'}{p'}>1$ and $x'=\frac{x}{x-1}$, implies
\begin{equation}\label{eq:q<p1}
 \left(\int_r^1 \left(\frac{\omg(s)}{1-s}\right)^{p'}\,ds \right)^{\frac{1}{p'}}
\le \left(\int_r^1 \left(\frac{\omg(s)}{1-s}\right)^{q'}\,ds \right)^{\frac{1}{q'}}(1-r)^{\frac{1}{p'}-\frac{1}{q'}},
\quad 0\le r<1.
\end{equation}
Moreover, 
\begin{equation}\label{eq:q<p2} \left( 1+\int_0^r \frac{1}{\omg(s)^p}\,ds \right)^{\frac{1}{p}}\lesssim 
\omg(r)^{\frac{q}{p}-1}
\left(1+\int_0^r \frac{1}{\omg(s)^q}\,ds \right)^{\frac{1}{p}},
 \quad 0\le r<1. \end{equation}
So, the identity $\frac{1}{p'}-\frac{1}{q'}=\frac{1}{q}-\frac{1}{p}$,
 together with  \eqref{eq:r1}, \eqref{eq:q<p1} and \eqref{eq:q<p2} yield
\begin{equation*}\begin{split}
&\left(1+\int_0^r \frac{1}{\omg(t)^p} dt\right)^{\frac{1}{p}}
\left(\int_r^1 \left(\frac{\omg(t)}{1-t}\right)^{p'}\,dt\right)^{\frac{1}{p'}}
\\ & \lesssim \left( \frac{1-r}{\omg(r)^q}\right)^{\frac{1}{q}-\frac{1}{p}} \left(1+\int_0^r \frac{1}{\omg(s)^q}\,ds \right)^{\frac{1}{p}}\left(\int_r^1 \left(\frac{\omg(s)}{1-s}\right)^{q'}\,ds \right)^{\frac{1}{q'}}
\\ & \lesssim \left(1+\int_0^r \frac{1}{\omg(s)^q}\,ds \right)^{\frac{1}{q}}\left(\int_r^1 \left(\frac{\omg(s)}{1-s}\right)^{q'}\,ds \right)^{\frac{1}{q'}}.
\end{split}\end{equation*}
Consequently $K_{p,c}(\om)<\infty$, and by Theorem~\ref{bounded Xp}, $T:X_p\to Y_p$ is bounded.

Assume that $q=1$, that is, $T:X_1\to Y_1$ is bounded. By Theorem~\ref{bounded X_1}, $\omega\in\DD$ and $M_{1,d}(\omega)<\infty$, so $K_{1,d}(\omega)<\infty$.
Then,
\begin{equation*}\begin{split}
\int_r^1 \left(\frac{\omg(s)}{1-s}\right)^{p'}\,ds
& \le K^{p'}_{1,d}(\omega) \int_r^1 
\left(1+\int_0^s \frac{1}{\omg(t)}\,dt \right)^{-p'} \,ds
\\ & \le 
 K^{p'}_{1,d}(\omega)(1-r)\left(1+\int_0^r \frac{1}{\omg(t)}\,dt \right)^{-p'}
\\ & \lesssim  K^{p'}_{1,d}(\omega) \frac{\omg(r)^{p'}}{(1-r)^{p'-1}}, \quad 0\le r<1,
\end{split}\end{equation*}
where in the last inequality we have used \eqref{eq:r1} with $q=1$. 
 Moreover,
$$ \left( 1+\int_0^r \frac{1}{\omg(s)^p}\,ds \right)^{\frac{1}{p}}\lesssim 
\frac{1}{\omg(r)^{1-\frac{1}{p}}}\left( 1+\int_0^r \frac{1}{\omg(s)}\,ds \right)^{\frac{1}{p}}\le
\frac{(1-r)^{\frac{1}{p}}}{\omg(r)} K_{1,d}(\omega)^{1/p}, \quad 0\le r<1.$$
So, $M_{p,c}(\om)<\infty$, and by Theorem~\ref{bounded Xp}, 
$T:X_p\to Y_p$ is bounded.
This finishes the proof.
\end{Prf}

Finally, we present two more conditions which characterize the radial weights $\omega$ such that 
$T: X_p\to Y_p$, $1<p<\infty$, is bounded, where $X_p,Y_p\in \{H(\infty,p),H^p, D^p_{p-1}, HL(p)\}$ and 
$T\in\{ \Ho, \widetilde{\Ho}\}$.

\begin{proposition}\label{pr:reforMp}
Let $\omega$ be a radial weight and $1<p<\infty$. Then, the following conditions are equivalent:
\begin{enumerate}
\item[\rm(i)] $\om \in \DD$ and 
$K_{p,d}(\omega)=\sup\limits_{0< r<1}\frac{\omg(r)}{(1-r)^{\frac{1}{p}}}\left(1+\int_0^r \frac{1}{\omg(s)^p}\,ds \right)^{\frac{1}{p}}<\infty$;
\item[\rm(ii)] $\om \in \DD$ and   $K_{p,e}(\omega)=\sup\limits_{0< r<1}\frac{(1-r)^{\frac{1}{p}}}{\omg(r)}
\left(\int_r^1 \left(\frac{\omg(s)}{1-s}\right)^{p'}\,ds \right)^{\frac{1}{p'}}<\infty$;
\item[\rm(iii)] $\om \in \DD$ and 
$
K_{p,c}(\om)=
\sup\limits_{0<r<1} \left(1+\int_0^r \frac{1}{\omg(t)^p} dt\right)^{\frac{1}{p}}
\left(\int_r^1 \left(\frac{\omg(t)}{1-t}\right)^{p'}\,dt\right)^{\frac{1}{p'}}<\infty.$
\end{enumerate}
\end{proposition}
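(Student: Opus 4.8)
The plan is to run everything through the two quantities
$A(r)=\left(1+\int_0^r \omg(t)^{-p}\,dt\right)^{1/p}$ and $B(r)=\left(\int_r^1(\omg(t)/(1-t))^{p'}\,dt\right)^{1/p'}$, so that $K_{p,c}(\om)=\sup_{r}A(r)B(r)$, while $K_{p,d}(\om)=\sup_{r}\frac{\omg(r)}{(1-r)^{1/p}}A(r)$ and $K_{p,e}(\om)=\sup_{r}\frac{(1-r)^{1/p}}{\omg(r)}B(r)$. First I would record two baseline estimates valid for every $\om\in\DD$: the lower bound $A(r)\gtrsim (1-r)^{1/p}/\omg(r)$, which is precisely \eqref{eq:r1} with $q=p$; and the lower bound $B(r)\gtrsim \omg(r)/(1-r)^{1/p}$, obtained by restricting the integral defining $B(r)$ to $[r,\tfrac{1+r}{2}]$, where $1-t\asymp 1-r$ and $\omg(t)\ge\omg(\tfrac{1+r}{2})\gtrsim\omg(r)$ by the doubling property. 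Multiplying the first by $\frac{\omg(r)}{(1-r)^{1/p}}A(r)$ and the second by $\frac{(1-r)^{1/p}}{\omg(r)}B(r)$ and taking suprema gives $K_{p,d}(\om)\lesssim K_{p,c}(\om)$ and $K_{p,e}(\om)\lesssim K_{p,c}(\om)$, so (iii)$\Rightarrow$(i) and (iii)$\Rightarrow$(ii) are immediate.

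The substance of the proposition is the converse, which I would prove first as (i)$\Rightarrow$(iii). Assuming $\om\in\DD$ and $K_{p,d}(\om)<\infty$, the definition of $K_{p,d}$ supplies the matching upper bound $A(r)\lesssim (1-r)^{1/p}/\omg(r)$, hence $F(r):=A(r)^p\asymp (1-r)/\omg(r)^p$. Since $F'(r)=\omg(r)^{-p}$ a.e., the hypothesis reads $F(r)\le K_{p,d}(\om)^p(1-r)F'(r)$, i.e. $\tfrac{d}{dr}\log F(r)\ge \tfrac{1}{K_{p,d}(\om)^p(1-r)}$; integrating from $r$ to $\sigma$ gives $F(\sigma)/F(r)\ge\left(\frac{1-r}{1-\sigma}\right)^{c}$ with $c=K_{p,d}(\om)^{-p}>0$. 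Transporting this back through $F\asymp (1-\cdot)/\omg^{p}$ yields the \emph{strict} decay $\omg(\sigma)\lesssim\left(\frac{1-\sigma}{1-r}\right)^{\frac1p+\frac{c}{p}}\omg(r)$ for $r\le\sigma<1$, whose exponent exceeds $1/p$. Inserting this into the integral defining $B(r)$ now makes it converge, giving $B(r)\lesssim \omg(r)/(1-r)^{1/p}$; with the baseline lower bound this is $B(r)\asymp \omg(r)/(1-r)^{1/p}$, and therefore $K_{p,c}(\om)=\sup_{r}A(r)B(r)\asymp K_{p,d}(\om)<\infty$.

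The implication (ii)$\Rightarrow$(iii) is the mirror image. From $K_{p,e}(\om)<\infty$ together with the baseline one gets $B(r)\asymp \omg(r)/(1-r)^{1/p}$, so $G(r):=B(r)^{p'}\asymp \omg(r)^{p'}/(1-r)^{p'-1}$ with $G'(r)=-(\omg(r)/(1-r))^{p'}$, and the hypothesis becomes $G(r)\le K_{p,e}(\om)^{p'}(1-r)(-G'(r))$. Integrating this differential inequality gives $G(\sigma)/G(r)\le\left(\frac{1-\sigma}{1-r}\right)^{c'}$ with $c'=K_{p,e}(\om)^{-p'}>0$, which again upgrades, via $G\asymp \omg^{p'}/(1-\cdot)^{p'-1}$, to a decay of $\omg$ with exponent strictly larger than $1/p$. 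Feeding that decay into the head integral $\int_0^r \omg(s)^{-p}\,ds$ produces the upper bound $A(r)\lesssim (1-r)^{1/p}/\omg(r)$, hence $A(r)\asymp (1-r)^{1/p}/\omg(r)$, and so $K_{p,c}(\om)\asymp K_{p,e}(\om)<\infty$. This closes the three-way equivalence.

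I expect the main obstacle to be exactly the passage from the borderline exponent $1/p$—which $\DD$ combined with either finiteness condition supplies essentially for free—to a strictly larger exponent. A naive dyadic splitting of $B(r)$ only reproduces the decay $\omg(r_k)\lesssim 2^{-k/p}\omg(r)$, which lands on a logarithmically critical (divergent) sum; it is the self-improvement built into the differential inequalities above, morally the openness of the Muckenhoupt condition, that supplies the extra $\delta>0$ and makes the tail or head integral converge. Some care is also needed to justify the differential-inequality step (absolute continuity and strict monotonicity of $F$ and $G$ on compact subintervals of $[0,1)$, where they stay positive and finite) and to keep track that every implied constant depends only on $p$, on the doubling constant of $\om$, and on $K_{p,d}(\om)$ (respectively $K_{p,e}(\om)$).
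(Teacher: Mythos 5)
Your proposal is correct and is essentially the paper's own argument in different clothing: the differential inequalities you integrate, $F'/F\ge c/(1-r)$ with $c=K_{p,d}(\om)^{-p}$ and $-G'/G\ge c'/(1-r)$ with $c'=K_{p,e}(\om)^{-p'}$, are exactly the paper's observation that $(1-r)^{\kappa}\left(1+\int_0^r\omg(s)^{-p}\,ds\right)$ is non-decreasing and $(1-r)^{-\eta}\int_r^1\left(\frac{\omg(s)}{1-s}\right)^{p'}ds$ is non-increasing for $\kappa=K_{p,d}(\om)^{-p}$, $\eta=K_{p,e}(\om)^{-p'}$, and both proofs then exploit the resulting strict power gain to make the tail (resp.\ head) integral converge. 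The only differences are organizational (the paper proves (i)$\Leftrightarrow$(ii) directly and then deduces (iii), while you route (i) and (ii) separately into (iii) via an intermediate pointwise decay estimate for $\omg$), not mathematical.
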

\begin{proof}
Assume that (i) holds. A calculation shows that
$F(r)=(1-r)^{\kappa}\left( 1+\int_0^r \frac{ds}{\omg(s)^p}\, \right)$, with $\kappa=\frac{1}{K^p_{p,d}(\omega)}$, is non-decreasing in $[0,1)$. So,
\begin{equation*}\begin{split}
\int_r^1 \left(\frac{\omg(s)}{1-s}\right)^{p'}\,ds
& \le K^{p'}_{p,d}(\omega) \int_r^1 \frac{1}{1-s}\left(1+\int_0^s \frac{1}{\omg(t)^p}\,dt \right)^{-\frac{p'}{p}} \,ds
\\ & \le K^{p'}_{p,d}(\omega)F(r)^{-\frac{p'}{p}} \int_r^1 (1-s)^{\frac{\kappa p'}{p}-1} ds
\\ & \lesssim  K^{p'+p}_{p,d}(\omega)\left(1+\int_0^r \frac{1}{\omg(t)^p}\,dt \right)^{-\frac{p'}{p}}
\\ & \lesssim  K^{p'+p}_{p,d}(\omega) \frac{\omg(r)^{p'}}{(1-r)^{p'-1}}, \quad 0\le r<1,
\end{split}\end{equation*}
where in the last inequality we have used \eqref{eq:r1}. 
That is (ii) holds.

Now, assume that (ii) holds. Since $\om\in\DD$
\begin{equation*}\label{eq:r2}
\int_r^1 \left(\frac{\omg(s)}{1-s}\right)^{p'}\,ds \gtrsim \frac{\omg(r)^{p'}}{(1-r)^{p'-1}},\quad 0\le r<1.
\end{equation*}
Moreover,
$H(r)=(1-r)^{-\eta}\int_r^1 \left(\frac{\omg(s)}{1-s}\right)^{p'}\,ds $, with $\eta=\frac{1}{K^{p'}_{p,e}(\omega)}$, is non-increasing in $[0,1)$. 
So,
\begin{equation*}\begin{split}
 1+\int_0^r \frac{ds}{\omg(s)^p}\, &
 \le 1+ K^{p}_{p,e}(\omega) \int_0^r \frac{1}{1-s}\left(\int_s^1 \left(\frac{\omg(t)}{1-t}\right)^{p'}\,dt \right)^{-\frac{p}{p'}}\,ds
 \\ &  \le 1+ K^{p}_{p,e}(\omega) H(r)^{-\frac{p}{p'}} \int_0^r \frac{1}{(1-s)^{1+\eta\frac{p}{p'}}} ds
 \\ & \lesssim  K^{p+p'}_{p,e}(\omega)\left(\int_r^1 \left(\frac{\omg(t)}{1-t}\right)^{p'}\,dt \right)^{-\frac{p}{p'}}
 \\ & \lesssim  K^{p+p'}_{p,e}(\omega) \frac{1-r}{\omg(r)^p},\quad 0\le r<1,
\end{split}\end{equation*}
therefore (i) holds.

Next, if (i) holds, then (ii) holds and so it is clear that (iii) holds.
Finally, (iii) together with \eqref{eq:r1} implies (ii). This finishes the proof.
\end{proof}

\end{document}